\documentclass[reqno]{amsart}
\usepackage{graphicx}

\usepackage{amssymb}
\usepackage{amsfonts}
\usepackage{indentfirst}
\usepackage{graphicx, graphics, psfig, PSTricks}
\usepackage{pst-plot}

\renewcommand\bigskip{\medskip}
\def\@oddhead{\hbox to \textwidth{\footnotesize{\it
Khintchine exponents and Lyapunov exponents } \hfill\thepage}}

\def\bc{\begin{center}}
\def\ec{\end{center}}

\newtheorem{lem}{\bf Lemma}[section]

\newtheorem{pro}[lem]{\bf Proposition}
\newtheorem{thm}[lem]{\bf Theorem}
\newtheorem{rem}[lem]{\bf Remark}

\newtheorem{cor}[lem]{\bf Corollary}
\newtheorem{fac}[lem]{\bf Fact}

\begin{document}

\title[Khintchine exponents and Lyapunov
exponents]{On Khintchine exponents and Lyapunov exponents of
continued  fractions}

\author{Ai-Hua FAN}
\address{Ai-Hua FAN: \
Department of Mathematics,
 Wuhan University,
 Wuhan, 430072, P.R. China
 \& CNRS UMR 6140-LAMFA,
 Universit\'e de Picardie
 80039 Amiens, France}%
\email{ai-hua.fan@u-picaride.fr}%
\author{Ling-Min LIAO}
\address{Ling-Min LIAO: Department of Mathematics,
 Wuhan University,
 Wuhan, 430072, P.R. China \&
 CNRS UMR 6140-LAMFA,
 Universit\'e de Picardie
 80039 Amiens, France}%
 \email{lingmin.liao@u-picardie.fr}%
\author{Bao-Wei WANG}
\address{Bao-Wei WANG: Department of Mathematics,
 Huazhong University of Science and Technology,
 Wuhan, 430074, P.R. China}%
 \email{bwei$_-$wang@yahoo.com.cn}%
\author{Jun WU}
\address{Jun WU: Department of Mathematics,
 Huazhong University of Science and Technology,
 Wuhan, 430074, P.R. China}%
 \email{wujunyu@public.wh.hb.cn }%

\subjclass[2000]{11K55, 28A78, 28A80}
 \keywords{Continued fraction, Gibbs measure, Hausdorff
dimension}
\begin{abstract}
Assume that $x\in [0,1) $ admits its continued fraction expansion
$x=[a_1(x), a_2(x),\cdots]$.  The Khintchine exponent $\gamma(x)$ of
$x$ is defined by $ \gamma(x):=\lim\limits_{n\to \infty}\frac{1}{n}
\sum_{j=1}^n \log a_j(x)$ when the limit exists. Khintchine spectrum
$\dim E_\xi$ is fully studied, where $ E_{\xi}:=\{x\in
[0,1):\gamma(x)=\xi\} \ (\xi \geq 0)$ and $\dim$ denotes the
Hausdorff dimension. In particular, we prove the remarkable fact
that the Khintchine spectrum $\dim E_{\xi}$, as function of $\xi \in
[0, +\infty)$, is neither concave nor convex. This is a new
phenomenon from the usual point of view of multifractal analysis.
Fast Khintchine exponents defined by
$\gamma^{\varphi}(x):=\lim\limits_{n\to \infty}\frac{1}{\varphi(n)}
\sum_{j=1}^n \log a_j(x)$ are also studied, where $\varphi (n)$
tends to the infinity faster than $n$ does. Under some regular
conditions on $\varphi$, it is proved that the fast Khintchine
spectrum $\dim (\{ x\in [0,1]: \gamma^{\varphi}(x)= \xi \}) $ is a
constant function. Our method also works for other spectra like the
Lyapunov spectrum and the fast Lyapunov spectrum.
\end{abstract}

\maketitle

\addtocounter{section}{0}
\section{Introduction and Statements}
The continued fraction of a real number can be generated by the
Gauss transformation $T:[0,1)\rightarrow [0,1)$ defined by
\begin{eqnarray}\label {ff1}
T(0):=0,\ \ T(x):=\frac{1}{x} \ ({\rm{mod}}\ 1),\  {\rm{for}}\  x
\in (0,1)
\end{eqnarray}
in the sense that every irrational number $x$ in $[0,1)$ can be
uniquely expanded as an infinite expansion of the form
\begin{equation}\label{ff2}
x=\frac{\displaystyle 1}{\displaystyle a_1(x)+\frac{1}{a_2+\ddots+
\frac{\displaystyle 1}{\displaystyle
a_n(x)+T^n(x)}}}=\frac{\displaystyle 1}{\displaystyle a_1(x)+
\frac{\displaystyle 1}{ a_2(x)+\displaystyle \frac{ 1}{
a_3(x)+\ddots}}}
\end{equation}
where $a_1(x)=\lfloor{1}/{x}\rfloor$ and $a_n(x)=a_1(T^{n-1}(x))$
for $n\geq 2$ are called {\em partial quotients} of $x$ ($\lfloor x
\rfloor$ denoting the integral part of $x$). For simplicity, we will
denote the second term in (\ref{ff2}) by $[a_1,a_2,\cdots,
a_n+T^n(x)]$ and the third term by $[a_1,a_2, a_3,\cdots]$.

\indent It was known to E. Borel \cite{Bo1} (1909) that for Lebesgue
almost all $x\in [0,1)$, there exists a subsequence $\{a_{n_r}(x)\}$
of $\{a_n(x)\}$ such that $a_{n_r}(x)\rightarrow \infty$. A more
explicit result due to Borel-Bernstein (see \cite{BB,Bo1,Bo2}) is
the $0$-$1$ law which hints that for almost all $x\in [0,1]$,
$a_n(x)>\varphi(n)$ holds for infinitely many $n$'s or finitely many
$n$'s according to  $\sum\limits_{n\geq 1}\frac{1}{\varphi(n)}$
diverges or converges.  Then it arose a natural question to quantify
the exceptional sets in terms of Hausdorff dimension (denoted by
$\dim$). The first published work on this aspect was due to I.
Jarnik \cite{Ja} (1928) who was concerned with the set $E$ of
continued fractions with bounded partial quotients and with the sets
$E_2, E_3, \cdots$, where $E_\alpha$ is the set of continued
fractions whose partial quotients do not exceed $\alpha$. He
successfully got that the set $E$ is of full Hausdorff dimension,
but he didn't find the exact dimensions of $E_2, E_3, \cdots$.
Later, many works are done to estimate $\dim E_2$, including those
of I. J. Good \cite{Go}, R. Bumby \cite{Bu2}, D. Hensley \cite{He1,
He2}, O. Jenkinson and M. Pollicott \cite{JenP}, R. D. Mauldin, M.
Urba\'nski \cite{MU} and references therein. Up to now, the optimal
approximation on $\dim E_2$ is the result given by O. Jenkinson
\cite{Jen} (2004):
\begin{eqnarray*}
\dim
E_2=0.531280506277205141624468647368471785493059109018398779\cdots
\end{eqnarray*}
which is claimed to be accurate to 54 decimal places.

In the present paper, we study the Khintchine exponents and the
Lyapunov exponents of continued fractions. For any $x\in [0,1)$ with
its continued fraction (\ref{ff2}), we define its {\it Khintchine
exponent} $\gamma(x)$ and {\it Lyapunov exponent}
 $\lambda(x)$ respectively by
\begin{eqnarray*}
&&\gamma (x):=\lim_{n\to \infty}\frac{1}{n}\sum_{j=1}^n\log a_j(x)
=\lim_{n\to \infty}\frac{1}{n}\sum_{j=0}^{n-1}\log
a_1({T^j(x)}),\\
&&\lambda (x):=\lim_{n\to \infty}\frac{1}{n}\log
\Big|(T^n)'(x)\Big|=\lim_{n\to
\infty}\frac{1}{n}\sum_{j=0}^{n-1}\log \Big|T'(T^j(x))\Big|,
\end{eqnarray*}
if the limits exist. The Khintchine exponent of $x$ stands for the
average (geometric) growth rate of the partial quotients $a_n(x)$,
and the Lyapunov exponent which is extensively studied from
dynamical system point of view, stands for the expanding rate of
$T$. Their common feature is  that both are Birkhoff averages.

Let $\varphi: \mathbb{N} \to \mathbb{R}_+$. Assume that $\lim_{n\to
\infty} \frac{\varphi(n)}{n}= \infty$.  The {\it fast Khintchine
exponent} and {\it fast Lyapunov exponent} of $x\in [0,1]$, relative
to $\varphi$, are respectively defined by
\begin{eqnarray*}
&&\gamma^{\varphi} (x):=\lim_{n\to
\infty}\frac{1}{\varphi(n)}\sum_{j=1}^n\log a_j(x) =\lim_{n\to
\infty}\frac{1}{\varphi(n)}\sum_{j=0}^{n-1}\log
a_1({T^j(x)}),\\
&&\lambda^{\varphi} (x):=\lim_{n\to \infty}\frac{1}{\varphi(n)}\log
\Big|(T^n)'(x)\Big|=\lim_{n\to
\infty}\frac{1}{\varphi(n)}\sum_{j=0}^{n-1}\log
\Big|T'(T^j(x))\Big|.
\end{eqnarray*}

It is well known (see \cite{Bi, Wa}) that the transformation $T$ is
measure preserving and ergodic with respect to the Gauss measure
$\mu_G$ defined as
\begin{eqnarray*}
d\mu_G=\frac{dx}{(1+x)\log 2}.
\end{eqnarray*}
 An application of Birkhoff ergodic theorem yields
that for Lebesgue almost all $x\in [0,1)$,
\begin{eqnarray*}
 &&\gamma(x)=\xi_0=\int\log a_1(x)d\mu_G =\frac{1}{\log 2} \sum_{n=1}^\infty \log n \cdot \log \Big( 1+ \frac{1}{n(n+2)} \Big) =2.6854...\\
&&\lambda (x)=\lambda_0=\int\log |T'(x)|d\mu_G = \frac{\pi^2}{6
\log 2} = 2.37314....
\end{eqnarray*}
Here $\xi_0$ is called the {\it{Khintchine  constant}} and
$\lambda_0$ the {\it{Lyapunov  constant}}. Both constants are
relative to the Gauss measure.

For real numbers $\xi,\beta \geq 0$, we are interested in the level
sets of Khintchine exponents and Lyapunov exponents:
\begin{eqnarray*}
&& E_{\xi}:=\{x\in [0,1): \gamma(x)=\xi\},\\
&& F_{\beta}:=\{x\in [0,1): \lambda(x)=\beta\}.
\end{eqnarray*}
We are also interested in the level sets of fast Khintchine
exponents and fast Lyapunov exponents:
\begin{eqnarray*}
&&E_{\xi}(\varphi):=\{x\in [0,1): \gamma^{\varphi} (x)=\xi\},\\
&&F_\beta(\varphi):=\{x\in [0,1): \lambda^{\varphi} (x)=\beta\}.
\end{eqnarray*}

The {\it Khintchine spectrum} and the {\it Lyapunov spectrum} are
the dimensional functions:
\begin{eqnarray*}
t(\xi):= \dim E_{\xi} \qquad \tilde{t}(\xi):= \dim F_{\xi}.
\end{eqnarray*}
The following two functions
\begin{eqnarray*}
t^{\varphi}(\xi):= \dim E_{\xi}(\varphi) \qquad
\tilde{t}^{\varphi}(\xi):= \dim F_{\xi}(\varphi)
\end{eqnarray*}
are called the {\it fast Khintchine spectrum} and the {\it fast
Lyapunov spectrum} relative to $\varphi$.

 M. Pollicott and H. Weiss \cite{PoW} initially studied the
level set of $F_{\beta}$ and obtained some partial results about the
function $t(\xi)$. In the present work, we will give a complete
study on the Khintchine spectrum and the Lyapunov spectrum. Fast
Khintchine spectrum  and fast Lyapunov spectrum are considered here
for the first time. We shall see that both functions
$t^{\varphi}(\xi)$ and $\tilde{t}^{\varphi}(\xi)$ are equal.

We start with the statement of our results on fast spectra.

\begin{thm}\label{thm-fast}
 Suppose $\left(\varphi(n+1)-\varphi(n)\right)
\uparrow \infty$ and $\lim\limits_{n\to
\infty}\frac{\varphi(n+1)}{\varphi(n)}:=b \geq 1 $. Then
$E_{\xi}(\varphi)=F_{2\xi}(\varphi)$ and $\dim E_{\xi}(\varphi) =
1/(b+1) $ for all $\xi\geq 0$.
\end{thm}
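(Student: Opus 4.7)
The plan is to deduce the set equality $E_\xi(\varphi)=F_{2\xi}(\varphi)$ from a classical identity, and then to sandwich $\dim E_\xi(\varphi)$ between a fat-cylinder covering upper bound and a Cantor-set lower bound obtained via mass distribution. For the equality, the chain rule gives $\log|(T^n)'(x)|=2\sum_{j=1}^n\log(1/T^{j-1}(x))$, and $1/T^{j-1}(x)=a_j(x)+T^j(x)\in[a_j(x),a_j(x)+1)$ yields $\log|(T^n)'(x)|=2\sum_{j=1}^n\log a_j(x)+O(n)$. Since $\varphi(n+1)-\varphi(n)\uparrow\infty$ forces $\varphi(n)/n\to\infty$, dividing by $\varphi(n)$ eliminates the $O(n)$ and gives $\lambda^\varphi(x)=2\gamma^\varphi(x)$ whenever either limit exists. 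Telescoping $\sum_{j=1}^n\log a_j(x)=\xi\varphi(n)+o(\varphi(n))$ also yields the pointwise fact that drives the dimension calculation: for $x\in E_\xi(\varphi)$, $\log a_n(x)=\xi\Delta\varphi_n+o(\varphi(n))$, where $\Delta\varphi_n:=\varphi(n)-\varphi(n-1)$.

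For the upper bound, I focus on the main case $b>1$, where $\varphi(n)\asymp\Delta\varphi_n$ so the pointwise control actually pins $a_n$ in a narrow range. Fix $\epsilon>0$, decompose $E_\xi(\varphi)=\bigcup_N E_\xi^{(N,\epsilon)}$ according to when the approximation holds with error $\leq\epsilon\varphi(n)$ for all $n\geq N$, and cover each piece by \emph{fat cylinders}
\[
J_M(a_1,\ldots,a_{M-1})\;:=\;\bigcup_{k=\alpha_M}^{\beta_M}I_M(a_1,\ldots,a_{M-1},k),
\]
with $\alpha_M:=\lfloor e^{(\xi-C\epsilon)\Delta\varphi_M}\rfloor$ and $\beta_M:=\lceil e^{(\xi+C\epsilon)\Delta\varphi_M}\rceil$, the outer index running over admissible tuples. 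The bounds $|J_M|\asymp(\alpha_Mq_{M-1}^2)^{-1}$, $q_{M-1}\gtrsim e^{(\xi-C\epsilon)\varphi(M-1)}$, and count $\lesssim e^{(\xi+C\epsilon)\varphi(M-1)}$ give $\sum|J_M|^s\to 0$ whenever
\[
\bigl[(\xi+C\epsilon)-2s(\xi-C\epsilon)\bigr]\varphi(M-1)-s(\xi-C\epsilon)\Delta\varphi_M\to-\infty,
\]
which via $\Delta\varphi_M/\varphi(M-1)\to b-1$ reduces to $s>(\xi+C\epsilon)/((\xi-C\epsilon)(b+1))\to 1/(b+1)$. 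For the boundary case $b=1$ one uses instead the naive cover by all $I_M$ with $\sum_{j=1}^M\log a_j\in[(\xi-\epsilon)\varphi(M),(\xi+\epsilon)\varphi(M)]$; a standard integral estimate gives a count $\lesssim e^{(\xi+\epsilon)\varphi(M)(1+o(1))}$, from which $\dim\leq 1/2=1/(b+1)$ follows.

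For the matching lower bound I take the Cantor set $K:=\{x:\alpha_n\leq a_n(x)\leq 2\alpha_n\text{ for all }n\}$ with $\alpha_n:=\lfloor e^{\xi\Delta\varphi_n}\rfloor$, equip it with the uniform product measure $\mu$, and verify $K\subset E_\xi(\varphi)$ using $\Delta\varphi_n\uparrow\infty$ together with $n=o(\varphi(n))$. The mass distribution principle then requires $\mu(B(x,r))\leq Cr^{1/(b+1)-\eta}$ for $x\in K$, to be checked in two scale regimes: for $r\in[|I_n|,|J_n|]$ the ball meets at most $O(r/|I_n|)$ admissible level-$n$ cylinders of mass $\asymp e^{-\xi\varphi(n)}$; for $r\in[|J_{n+1}|,|I_n|]$ one has $B(x,r)\cap K\subset J_{n+1}$, so $\mu(B(x,r))\leq\mu(J_{n+1})\asymp e^{-\xi\varphi(n)}$. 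Both cases reduce to the limiting ratio $\log\mu(J_n)/\log|J_n|=\varphi(n-1)/(\Delta\varphi_n+2\varphi(n-1))\to 1/(b+1)$. The conceptual heart of the proof — and the reason the answer is $1/(b+1)$ rather than the naive $1/2$ — is that the relevant scale for the dimension is that of the fat cylinder $J_n$ and not the ordinary cylinder $I_n$: the size ratio $|J_n|/|I_n|\asymp\alpha_n=e^{\xi\Delta\varphi_n}$ contributes precisely the extra $\Delta\varphi_n$ term in the mass-to-radius denominator that converts $1/2$ into $1/(b+1)$.
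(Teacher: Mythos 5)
Your route is the paper's: the set equality from $\log|(T^n)'(x)|=2\sum_{j\le n}\log a_j(x)+O(n)$ together with $n=o(\varphi(n))$; the lower bound from a Cantor set with $a_n$ constrained to $[\alpha_n,2\alpha_n]$ plus the mass distribution principle (the paper packages this as a stand-alone dimension formula, its Lemma 3.2, and then applies it to a set of essentially your type); and the $b>1$ upper bound by covering with ``fat'' admissible cylinders and balancing the count against $|J_M|^s$. The one genuine divergence is the $b=1$ upper bound. The paper's Lemma 3.3 compares $|I_n|^s$ against the mass of the infinite-state Bernoulli measure $\mu_t(I_n)=e^{-nC(t)-t\sum\log a_j}$, $C(t)=\log\zeta(t)$, $t>1$; once $s>t/2$ and the cylinder order is large, $|I|^s\le\mu_t(I)$, so $\sum|I|^s\le1$ over the disjoint cover with no counting at all. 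You instead count cylinders with $\sum\log a_j\le(\xi+\epsilon)\varphi(M)$. That count is at most $A(1+\log A)^{M-1}/(M-1)!$ with $A=e^{(\xi+\epsilon)\varphi(M)}$, and the logarithm of the correction factor is $(M-1)\log(\varphi(M)/M)+O(M)$, which is $o(\varphi(M))$ precisely because $\varphi(M)/M\to\infty$. Your ``standard integral estimate'' should be spelled out to this effect, since $M$ grows together with the bound and the naive fixed-$M$ Piltz asymptotic is not immediately applicable.

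Two gaps to fill. First, in the mass-distribution step you assert that $B(x,r)\cap K\subset J_{n+1}$ when $r<|I_n|$ and that in the other regime the ball meets only $O(r/|I_n|)$ admissible level-$n$ cylinders. Both rest on the ball not jumping a gap between admissible intervals at the relevant level, which is not automatic; the bulk of the paper's proof of Lemma 3.2 is a three-case analysis (according to whether $a_n$ is interior to, or at either endpoint of, its allowed range) establishing exactly this, and you should carry out or cite the analogue. Second---and this is a defect of the statement as quoted and of the paper's proof as much as of yours---the dimension claim is false at $\xi=0$. Since $\varphi(n)/n\to\infty$, every $x$ with $\frac1n\sum_{j\le n}\log a_j(x)$ bounded already lies in $E_0(\varphi)$; in particular Lebesgue-a.e.\ $x$ and every badly approximable $x$ do, so $\dim E_0(\varphi)=1$, not $1/(b+1)$. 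Your own construction flags this: for $\xi=0$ you get $\alpha_n=1$, so $K=\{x:a_n(x)\in\{1,2\}\ \forall n\}$ with $\dim K=\dim E_2\approx0.531>1/(b+1)$, and the upper-bound cover also loses its lower constraint on $a_n$. The dimension formula (both yours and the paper's) should be asserted for $\xi>0$ only; the set equality $E_\xi(\varphi)=F_{2\xi}(\varphi)$, by contrast, is fine at $\xi=0$.
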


In order to state our results on the Khintchine spectrum, let us
first introduce some notation. Let
$$D:=\{(t,q) \in \mathbb{R}^2: 2t-q
> 1 \}, \qquad D_0:=
\{(t,q)\in \mathbb{R}^2: 2t-q>1, 0\leq t \leq 1 \}.$$
 For $(t,q)\in D$, define
\begin{eqnarray*}
P(t,q):=\lim_{n\to \infty} \frac{1}{n} \log
\sum_{\omega_1=1}^{\infty}\cdots\sum_{\omega_n=1}^{\infty} \exp
\left(
  \sup_{x\in [0,1]} \log \prod_{j=1}^{n}
\omega_j^q([\omega_j,\cdots, \omega_n+x])^{2t}\right).
\end{eqnarray*}
 It will be proved that $P(t,q)$ is an analytic function in $D$
 (Proposition \ref{measure_potential}).

 Moreover, for any $\xi\geq 0$, there exists a unique solution
$(t(\xi),q(\xi))\in D_0$ to the equation
\begin{eqnarray*}\left\{
  \begin{array}{ll}
     P(t,q)=q\xi, \\
     \displaystyle \frac{\partial P}{\partial q}(t,q)= \xi.
  \end{array}
\right.
\end{eqnarray*}
(Proposition~\ref{solution}).

\begin{thm}\label{thm-Birkhoff}
Let $\xi_0 = \int \log a_1(x) d \mu_G(x)$.
 For $\xi \geq 0 $, the set $E_{\xi}$ is of Hausdorff dimension
 $t(\xi)$. Furthermore, the dimension function $t(\xi)$ has the
 following properties:\\
\indent {\rm 1)} $t(\xi_0)=1 $ and $t(+\infty)=1/2 $;\\
\indent {\rm 2)} $t'(\xi)<0$ for all $\xi>\xi_0 $, $t'(\xi_0)=0$, and  $t'(\xi)>0$ for all $\xi<\xi_0 $; \\
\indent {\rm 3)} $t'(0+)=+\infty $ and $t'(+\infty)=0 $;\\
\indent {\rm 4)} $t''(\xi_0)<0$, but $t''(\xi_1)>0 $ for some
$\xi_1>\xi_0 $, so $t(\xi)$ is neither convex nor concave.
\end{thm}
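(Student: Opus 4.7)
My plan is to prove the dimension equality $\dim E_\xi = t(\xi)$ by the multifractal formalism adapted to the non-compact symbolic space $\mathbb{N}^{\mathbb{N}}$, and then derive the qualitative properties of $t(\xi)$ by implicit differentiation of the defining system together with boundary asymptotics of $P$.

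For the dimension equality, the upper bound is the covering argument. For any $q$ and any $x \in E_\xi$, eventually $\prod_{j\le n} a_j(x)^q \approx e^{nq\xi}$, so the natural $n$-cylinder cover of $E_\xi$ satisfies
\[
 \sum_{I_n \cap E_\xi \ne \emptyset} |I_n|^t \;\lesssim\; e^{-nq\xi + nP(t,q) + o(n)},
\]
giving $\dim E_\xi \le t$ whenever $P(t,q) \le q\xi$; optimizing over $q$ via $\partial P/\partial q = \xi$ delivers the upper bound $t(\xi)$. For the lower bound I would construct a Gibbs-type measure on $\mathbb{N}^{\mathbb{N}}$ for the potential $q(\xi)\log a_1 - 2 t(\xi)\log|T'|$ using the Mauldin--Urba\'nski infinite-alphabet thermodynamic formalism; the domain constraint $2t-q>1$ is precisely the summability condition. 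Birkhoff's theorem shows this measure is supported on $E_\xi$, the Gibbs property gives local dimension $t(\xi)$, and the mass distribution principle finishes.

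For the qualitative properties I differentiate the defining system. First, $(t,q) = (1,0)$ solves it at $\xi_0$: $P(1,0) = 0$ by Bowen's formula and $\partial P/\partial q(1,0) = \int \log a_1 \, d\mu_G = \xi_0$, which yields $t(\xi_0) = 1$. Differentiating $P(t(\xi),q(\xi)) = q(\xi)\xi$ in $\xi$ and substituting the second equation collapses it to $P_t\, t'(\xi) = q(\xi)$, hence
\[
 t'(\xi) \;=\; \frac{q(\xi)}{P_t(t(\xi),q(\xi))}.
\]
Since $P_t = -2\int \log|T'|\, d\mu_{t,q} < 0$ (twice the negative Lyapunov exponent of the equilibrium state), the sign of $t'$ is opposite to that of $q$. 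Differentiating $\partial P/\partial q = \xi$ at $\xi_0$, where $t'=0$, yields $q'(\xi_0) = 1/P_{qq}(1,0) > 0$ by strict convexity of the pressure in $q$, so $q$ crosses $0$ transversally from negative to positive at $\xi_0$; this gives property~(2), and the same calculation gives $t''(\xi_0) = q'(\xi_0)/P_t(1,0) < 0$, the concave half of property~(4).

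The remaining statements come from boundary asymptotics of $P$ near $\partial D$. As $\xi \to \infty$ the solution migrates toward the edge $2t-q=1$ where the pressure diverges; comparison with the explicit tail $\sum_\omega \omega^{q-2t}$ forces $t(\xi) \to 1/2$, and $|P_t|$ grows faster than $q$ along the solution curve, so $t'(+\infty) = 0$. Symmetrically, $\xi \to 0^+$ corresponds to $q \to -\infty$ (partial quotients asymptotically all equal $1$): only the symbol $\omega = 1$ contributes, $P_t \to 0$ while $|q|$ stays bounded below, so $t'(0^+) = +\infty$. Given these, the existence of $\xi_1 > \xi_0$ with $t''(\xi_1) > 0$ is forced by contradiction: on $(\xi_0,\infty)$, $t$ is $C^2$, decreasing, with $t(\xi_0)=1$, $t(+\infty)=1/2$, and $t'(\xi_0)=t'(+\infty)=0$; if $t''\le 0$ throughout, then $t'$ would be non-increasing and squeezed between its two zero limits, hence identically zero, contradicting $t(\xi_0) \ne t(+\infty)$. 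The main technical obstacle is making the boundary asymptotics rigorous, since $P$ is analytic only on the open set $D$ and the Gauss-map transfer operator loses uniform quasi-compactness as $(t,q) \to \partial D$; controlling the solution curve there seems to require a careful non-uniform analysis of the Gauss--Kuzmin--Wirsing operator at the critical exponent.
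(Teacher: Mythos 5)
Your overall strategy coincides with the paper's: lower bound via Gibbs measures of Mauldin--Urba\'nski type and the mass distribution principle, upper bound via the cylinder-cover/pressure estimate, implicit differentiation of the defining system to get $t'(\xi)=q(\xi)/\partial_t P$, and boundary asymptotics of $P$ for the limit properties. Two remarks about where you genuinely deviate and where you go wrong.

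Your argument for $t''(\xi_1)>0$ is different from the paper's and is, in fact, cleaner. The paper shows $q'(\xi_1)<0$ for some $\xi_1>\xi_0$ (from $q(\xi_0)=0$, $\lim_{\xi\to\infty}q(\xi)=0$, $q\not\equiv 0$) and then uses the identity $(t',q')H(t',q')^T+\partial_tP\cdot t''=2q'$ together with positive definiteness of the Hessian $H$. Your contradiction argument uses only properties (1)--(3) already in hand: if $t''\le 0$ on $(\xi_0,\infty)$ then the non-increasing function $t'$, pinned to $0$ at $\xi_0$ and at $+\infty$, is identically zero, contradicting $t(\xi_0)=1\ne 1/2$. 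This is correct and avoids invoking the Hessian.

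However, your treatment of the boundary asymptotics for property (3) contains a genuine error. You claim that as $\xi\to 0^+$ the measure concentrates on the symbol $1$, ``$\partial_tP\to 0$ while $|q|$ stays bounded below.'' Neither half is right. Concentration on $a_1=1$ makes the equilibrium state converge to the Dirac mass at $[1,1,\dots]=\theta_0$, so $\partial_t P=-\int\log|T'|\,d\mu_{t,q}\to-\log|T'(\theta_0)|=-\gamma_0\ne 0$; in fact $\partial_tP\le-\gamma_0<0$ everywhere (the Lyapunov exponent of any invariant measure is $\ge\gamma_0$). What actually drives $t'(0^+)=+\infty$ is the fact that $q(\xi)\to-\infty$, which the paper establishes (Proposition~\ref{q-xi-lim}) by contradiction: if $q(\xi_\delta)\to M>-\infty$ along some $\xi_\delta\to 0$, then $\partial_qP(t(\xi_\delta),q(\xi_\delta))\to\partial_qP(0,M)>0$, contradicting $\partial_qP=\xi_\delta\to 0$. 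With $q\to-\infty$ and $|\partial_tP|$ bounded between $\gamma_0$ and a finite constant, $t'=q/\partial_tP\to+\infty$. Similarly, for $\xi\to\infty$ one only needs $q(\xi)\to 0$ and $|\partial_tP|\ge\gamma_0$, not the (unproved, and probably false) claim that $|\partial_tP|$ ``grows faster than $q$.'' Finally, you overestimate the difficulty here: the paper's Proposition~\ref{p4.11} controls $P$ near $\partial D$ by the elementary two-sided comparison $-t\log 4+\log\zeta(2t-q)\le P(t,q)\le\log\zeta(2t-q)$, using only $\frac{1}{\omega_j+1}\le[\omega_j,\dots,\omega_n+x]\le\frac{1}{\omega_j}$. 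No spectral analysis of the Gauss--Kuzmin--Wirsing operator at the critical exponent is required. A smaller gap: your transversal-crossing argument for the sign of $q$ in property (2) is local near $\xi_0$; the global statement needs the uniqueness observation that $q(\xi)=0$ forces $t(\xi)=1$ (since $P(\cdot,0)=0$ only at $t=1$) and then $\xi=\partial_qP(1,0)=\xi_0$.
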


See Figure 1 for the graph of  $t(\xi)$.

\begin{center}
\begin{pspicture}(0,0)(10,7.4)
\rput(5.0,3.4){\includegraphics{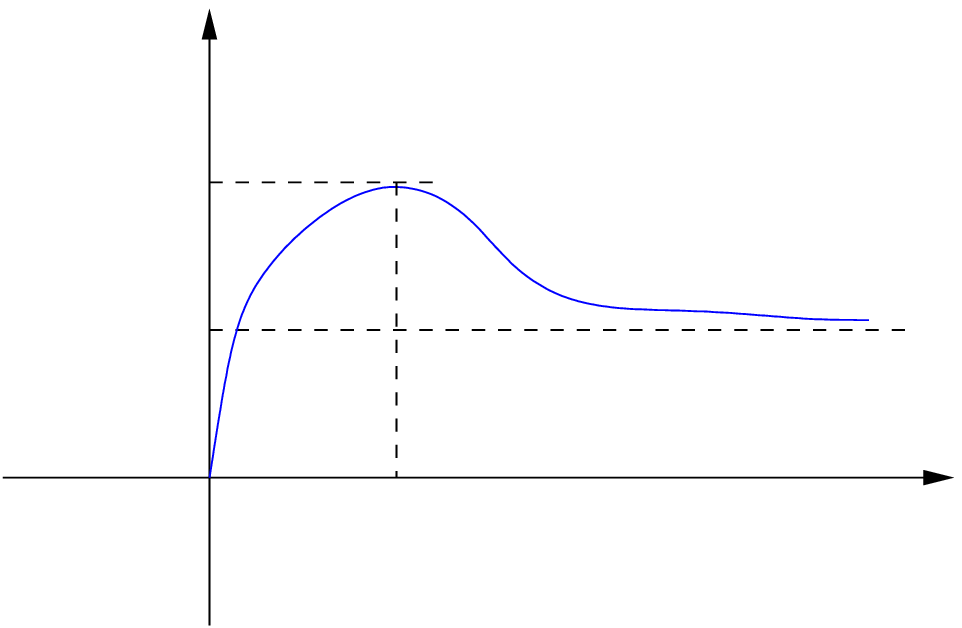}} \rput(2.05,1.5){$0$}
\rput(2.05,3.22){$\frac{1}{2}$} \rput(1.8, 6){$t(\xi)$}
\rput(2.05,4.75){$1$} \rput(4.17,1.5){$\xi_0$} \rput(9.6,1.5){$\xi$}
\end{pspicture}
\begin{center}
{{\it Figure 1.} Khintchine spectrum}
\end{center}
\end{center}

It should be noticed that the above fourth property of $t(\xi)$,
i.e. the non-convexity, shows a new phenomenon for the multifractal
analysis in our settings.

 Let $$\tilde{D}:= \{(\tilde{t},q): \tilde{t}-q >1/2 \} \qquad
 \tilde{D}_0:= \{(\tilde{t},q): \tilde{t}-q>1/2, 0\leq \tilde{t} \leq 1 \}.
 $$
 For $(\tilde{t},q)\in
\tilde{D}$, define
\begin{eqnarray*}
P_1(\tilde{t},q):=\lim_{n\to \infty} \frac{1}{n} \log
\sum_{\omega_1=1}^{\infty}\cdots\sum_{\omega_n=1}^{\infty} \exp
\Bigg(
  \sup_{x\in [0,1]} \log \prod_{j=1}^{n}
([\omega_j,\cdots, \omega_n+x])^{2(\tilde{t}-q)}\Bigg).
\end{eqnarray*}
In fact, $P_1(\tilde{t},q)=P(\tilde{t}-q,0)$, thus
$P_1(\tilde{t},q)$ is analytic in $\tilde{D}$.

Denote $\gamma_0:= 2\log \frac{1+\sqrt{5}}{2} $. For any $\beta\in
(\gamma_0, \infty)$, the system
\begin{eqnarray*}
\left\{
  \begin{array}{ll}
     P_1(\tilde{t},q)=q\beta, \\
     \displaystyle \frac{\partial P_1}{\partial q}(\tilde{t},q)= \beta
  \end{array}
\right.
\end{eqnarray*}
admits a unique solution $(\tilde{t}(\beta),q(\beta))\in
\tilde{D}_0$ (Proposition \ref{solution-Lya}).

\begin{thm}\label{thm-Lyapunov}
 Let $\lambda_0= \int \log |T'(x)| d \mu_G $ and
  $\gamma_0=2\log \frac{1+\sqrt{5}}{2}$. For any $\beta \in [\gamma_0, \infty)$,
  the set $F_{\beta}$ is of Hausdorff dimension
 $\tilde{t}(\beta)$. Furthermore the dimension function $\tilde{t}(\xi)$ has the
 following properties:\\
\indent {\rm 1)} $\tilde{t}(\lambda_0)=1 $ and $\tilde{t}(+\infty)=1/2 $;\\
\indent {\rm 2)} $\tilde{t}'(\beta)<0$ for all $\beta>\lambda_0 $,
$\tilde{t}'(\lambda_0)=0$,
 and  $\tilde{t}'(\beta)>0$ for all $\beta<\lambda_0 $; \\
\indent {\rm 3)} $\tilde{t}'(\gamma_0+)=+\infty $ and $\tilde{t}'(+\infty)=0 $;\\
\indent {\rm 4)} $\tilde{t}''(\lambda_0)<0$, but
$\tilde{t}''(\beta_1)>0 $ for some $\beta_1>\lambda_0 $, i.e.,
$\tilde{t}(\beta)$ is neither convex nor concave.
\end{thm}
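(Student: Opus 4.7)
The plan is to mirror the architecture of Theorem \ref{thm-Birkhoff} while exploiting the decisive simplification $P_1(\tilde{t},q)=P(\tilde{t}-q,0)$. Setting $\mathcal{P}(s):=P(s,0)$, the one-parameter geometric pressure of $-s\log|T'|$ on the Gauss system, the system of Proposition \ref{solution-Lya} collapses to
\[
\mathcal{P}(s)=q\beta,\qquad \mathcal{P}'(s)=-\beta\qquad (s=\tilde{t}-q),
\]
which is just the Legendre transform identity $\tilde{t}(\beta)=s(\beta)+\mathcal{P}(s(\beta))/\beta$ with $\mathcal{P}'(s(\beta))=-\beta$. Strict convexity of $\mathcal{P}$, together with $\mathcal{P}(1)=0$, $\mathcal{P}'(1)=-\lambda_0$, $\mathcal{P}'(s)\to -\gamma_0$ as $s\to\infty$ (realized by the golden-mean fixed point of the shift), and $\mathcal{P}(s)\to+\infty$ as $s\downarrow 1/2$, puts $\beta\in[\gamma_0,+\infty)$ in bijection with $s\in(1/2,+\infty]$, giving the range stated.

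For the upper bound $\dim F_\beta\le \tilde{t}(\beta)$, the standard distortion estimate on Gauss cylinders gives $|I_n(x)|\asymp |(T^n)'(x)|^{-1}$, so every $x\in F_\beta$ satisfies $|I_n(x)|=e^{-n\beta(1+o(1))}$. Covering $F_\beta$ by cylinders $I_n$, $n\ge N$, of this scale and raising to the power $\tilde{t}(\beta)+\delta$, the resulting Hausdorff sum is dominated by $\sum_n\exp\bigl(-n[q(\beta)\beta-P_1(\tilde{t}(\beta)+\delta,q(\beta))]\bigr)$, which converges by strict monotonicity of $P_1$ in $\tilde{t}$ and the defining equation $P_1(\tilde{t}(\beta),q(\beta))=q(\beta)\beta$.

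The matching lower bound uses Mauldin--Urba\'nski theory applied to the infinite conformal IFS of inverse branches of $T$: for each $s\in(1/2,+\infty)$ there is a unique ergodic equilibrium probability $\mu_s$ for $-s\log|T'|$, with $\int\log|T'|\,d\mu_s=-\mathcal{P}'(s)$ and $h(\mu_s)=\mathcal{P}(s)-s\mathcal{P}'(s)$. Choosing $s=s(\beta)$ forces $\lambda(x)=\beta$ for $\mu_{s(\beta)}$-a.e.\ $x$, so $\mu_{s(\beta)}(F_\beta)=1$; the Billingsley/Young identity $\dim\mu=h(\mu)/\lambda_\mu$ (applicable via the conformal distortion) then produces
\[
\dim F_\beta\ge \frac{h(\mu_{s(\beta)})}{\beta}=s(\beta)+\frac{\mathcal{P}(s(\beta))}{\beta}=\tilde{t}(\beta).
\]

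Properties 1)--3) then follow from implicit differentiation of $\tilde{t}(\beta)=s(\beta)+\mathcal{P}(s(\beta))/\beta$: one obtains $\tilde{t}'(\beta)=-\mathcal{P}(s(\beta))/\beta^2$, which vanishes precisely at $s=1$, i.e.\ $\beta=\lambda_0$, is negative on $(\lambda_0,\infty)$ and positive on $(\gamma_0,\lambda_0)$, while the boundary values $\tilde{t}(\lambda_0)=1$, $\tilde{t}(+\infty)=1/2$, $\tilde{t}'(\gamma_0+)=+\infty$, $\tilde{t}'(+\infty)=0$ are read off from $\mathcal{P}$ and $\mathcal{P}'$ at $s=1$, $+\infty$, and $1/2^+$. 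The main obstacle is Property 4): from
\[
\tilde{t}''(\beta)=-\frac{1}{\beta\,\mathcal{P}''(s(\beta))}+\frac{2\mathcal{P}(s(\beta))}{\beta^3},
\]
strict convexity of $\mathcal{P}$ gives $\tilde{t}''(\lambda_0)<0$, but exhibiting $\beta_1>\lambda_0$ with $\tilde{t}''(\beta_1)>0$ requires delicate quantitative control on the blow-up of $\mathcal{P}$ and $\mathcal{P}''$ at the phase transition $s=1/2$, and may ultimately reduce to a numerical check, as in the Khintchine case.
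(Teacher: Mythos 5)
Your reduction to the one-parameter geometric pressure $\mathcal{P}(s):=P(s,0)$ with $s=\tilde{t}-q$ is correct and is the cleanest way to organize this proof; the paper itself only remarks that Theorem~\ref{thm-Lyapunov} follows ``with the same argument'' as Theorem~\ref{thm-Birkhoff}, and records the Legendre-type formula $\tilde{t}(\beta)=\frac{1}{\beta}\inf_q\{P(-q)-q\beta\}$ of Proposition 6.4, which is exactly your identity. Your derivative formulas $\tilde{t}'(\beta)=-\mathcal{P}(s(\beta))/\beta^2$ and $\tilde{t}''(\beta)=-\frac{1}{\beta\mathcal{P}''(s(\beta))}+\frac{2\mathcal{P}(s(\beta))}{\beta^3}$ are both right, and parts 1)--3) follow from them together with $\mathcal{P}(1)=0$, $\mathcal{P}'(1)=-\lambda_0$, $\mathcal{P}'(s)\downarrow-\gamma_0$ as $s\to\infty$, and $\mathcal{P}(s)\to+\infty$, $\mathcal{P}'(s)\to-\infty$ as $s\downarrow1/2$. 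One incidental advantage of your formulation: in the two-parameter framework the Hessian of $P_1(\tilde{t},q)=\mathcal{P}(\tilde{t}-q)$ is only positive semi-definite, so the literal transcription of the Khintchine Hessian argument needs a small adjustment; your one-variable route sidesteps this.

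The gap is in the last paragraph. Property~4) does \emph{not} require ``delicate quantitative control on the blow-up of $\mathcal{P}$ and $\mathcal{P}''$'' and does \emph{not} reduce to numerics --- nor does the Khintchine case in the paper, which you misremember: the paper's proof of $t''(\xi_1)>0$ at the end of Section~5 is soft, based on $q(\xi_0)=0$, $q>0$ on $(\xi_0,\infty)$, and $q(\xi)\to0$ as $\xi\to\infty$, hence $q'(\xi_1)<0$ somewhere. The same soft argument transfers. In your notation put $q(\beta):=\tilde{t}(\beta)-s(\beta)=\mathcal{P}(s(\beta))/\beta$. Then $q(\lambda_0)=0$; for $\beta>\lambda_0$ one has $s(\beta)<1$, so $\mathcal{P}(s(\beta))>0$ and $q(\beta)>0$; and $0<q(\beta)<\tilde{t}(\beta)-\tfrac12\to0$ as $\beta\to\infty$. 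Therefore $q'(\beta_1)<0$ for some $\beta_1>\lambda_0$. Since $q'(\beta)=\frac{1}{\mathcal{P}''(s(\beta))}-\frac{\mathcal{P}(s(\beta))}{\beta^2}$, your second-derivative formula rewrites as
\[
\tilde{t}''(\beta)=\frac{1}{\beta}\left(-q'(\beta)+\frac{\mathcal{P}(s(\beta))}{\beta^2}\right),
\]
and at $\beta_1$ both summands are strictly positive, so $\tilde{t}''(\beta_1)>0$. No asymptotics near $s=1/2$ and no numerical check are required; you should finish the argument this way rather than leaving 4) open.
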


See Figure 2 for the graph of $\tilde{t}(\beta)$.

\begin{center}
\begin{pspicture}(0,0)(10,7)
\rput(5.0,3.1){\includegraphics{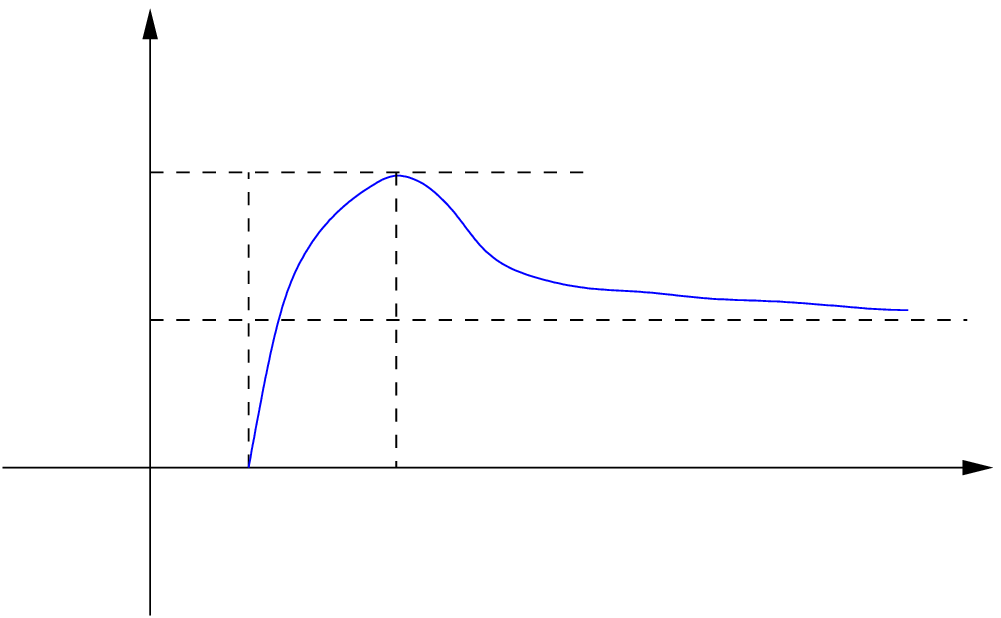}} \rput(1.25,1.27){$0$}
\rput(1.25,2.99){$\frac{1}{2}$} \rput(1, 5.77){$\tilde{t}(\beta)$}
\rput(1.25,4.52){$1$} \rput(2.44,1.2){$\gamma_0$}
\rput(4.05,1.2){$\lambda_0$} \rput(9.65,1.2){$\beta$}
\end{pspicture}
\begin{center}
{{\it Figure 2.} Lyapunov spectrum}
\end{center}
\end{center}

The last two theorems are concerned with special Birkhoff spectra.
In general,  let $(X,T)$ be a dynamical system ($T$ being a map from
a metric space $X$ into itself). The Birkhoff average of a function
$\phi: X \rightarrow \mathbb{R}$, defined by
\begin{eqnarray*}
  \overline{\phi}(x) :=\lim_{n\to \infty}\frac{1}{n} \sum_{j=0}^{n-1} \phi(T^j(x))
  \qquad x\in X
\end{eqnarray*}
(if the limit exists) is widely studied. From the point of view of
multifractal analysis, one is often interested in determining the
Hausdorff dimension of the set $\{ x\in X: \overline{\phi}(x)=
\alpha \}$ for a given $\alpha \in \mathbb{R}$. The function
\begin{eqnarray*}
f(\alpha):= \dim \left( \{ x\in X: \overline{\phi}(x)= \alpha
\}\right)
\end{eqnarray*}
 is called the {\it{Birkhoff spectrum}} for the function
$\phi$. When $X$ is compact, $T$ and $\phi$ are continuous, the
Birkhoff spectrum are well studied (see \cite{BSS,FF,FFW} and the
references therein. See also the book of Y. B. Pesin~\cite{Pesin}).

The main tool of our study is the Ruelle-Perron-Frobenius operator
with potential function
$$
\Phi_{t,q}(x)=t\log |T'(x)|+q\log a_1(x), \ \ \Psi_t(x)=t\log
|T'(x)|,
$$
where $(t, q)$ are suitable parameters.  The classical way to obtain
the spectrum through Ruelle theory usually fixes $q$ and finds
$T(q)$ as the solution of $P(T(q),q)=0$. (Here $P(t,q)$ is the
pressure corresponding to the potential function of two parameters.)
By focusing on the curve $T(q)$, one can only get some partial
results (\cite{PoW}). In the present paper, we look for multifractal
information from the whole two dimensional surface defined by the
pressure $P(t,q)$ rather than the single curve $T(q)$. This  leads
us to obtain  complete graphs of the Khintchine spectrum and
Lyapunov spectrum.

For the Gauss dynamics, there exist several works on pressure
functions associated to different potentials.  For a detailed study
on pressure function associated to one potential function, we refer
to the works of D. Mayer \cite{Ma1, Ma2, Ma3}, and for pressure
functions associated to two potential functions, we refer to the
works of M. Pollicott and H. Weiss \cite{PoW}, of P. Walters
\cite{Wa1, Wa2} and of P. Hanus, R. D. Mauldin and M. Urbanski
\cite{HMU}. We will use the theory developed in \cite{HMU}.

The paper is organized as follows. In Section 2, we collect and
establish some basic results that will be used later. Section 3 is
devoted to proving the results about the fast Khintchine spectrum
and fast Lyapunov spectrum (Theorem \ref{thm-fast}). In Section 4,
we present a general Ruelle operator theory developed in \cite{HMU}
and then apply it to the Gauss transformation. Based on Section 4,
we establish Theorem \ref{thm-Birkhoff} in Section 5. The last
section is devoted to the study of Lyapunov spectrum (Theorem
\ref{thm-Lyapunov}).

The present paper is a part of the second author's Ph. D. thesis.


\setcounter{equation}{0}

\section{Preliminary}
In this section, we collect some known facts and establish some
elementary properties of continued fractions that will be used
later. For a wealth of classical results about continued fractions,
see the books by J. Cassels \cite{Ca}, G. Hardy and E. Wright
\cite{HaW}. The books by P. Billingsley \cite{Bi}, I. Cornfeld, S.
Fomin and Ya. Sinai \cite{Co} contain an excellent introduction to
the dynamics of the Gauss transformations and its connection with
Diophantine approximation.

\subsection{Elementary properties of continued fractions}
Denote by $p_n/q_n$ the usual $n$-th $convergent$ of continued
fraction $x=[a_1(x), a_2(x), \cdots]\in [0,1)\setminus \mathbb{Q}$,
defined by
\begin{eqnarray*}
 \frac{p_n}{q_n}:=[ a_1(x), \cdots, a_n(x)]
 :=\frac{\displaystyle 1}{\displaystyle a_1(x)+
 \frac{\displaystyle 1}{\displaystyle a_2(x)+\ddots+\displaystyle
 \frac{1}{a_n(x)}}}.
 \end{eqnarray*}
It is known (see \cite{Kh} p.9) that $p_n, q_n$ can be obtained by
the recursive relation:
\begin{eqnarray*}
p_{-1}=1, \ p_0=0, \ p_n = a_np_{n-1}+p_{n-2}  & & (n\geq 2),\\
q_{-1}=0, \ q_0=1, \ q_n = a_nq_{n-1}+q_{n-2}  & & (n\geq 2).
\end{eqnarray*}
Furthermore, we have
\begin{lem}[\cite{Kr1} p.5]\label{Q-equ}
 Let $\varepsilon_1, \cdots, \varepsilon_n \in \mathbb{R}^+$.
Define inductively
\begin{eqnarray*}
Q_{-1}=0, \ Q_0=1,  \ Q_n(\varepsilon_1, \cdots,
\varepsilon_n)=\varepsilon_n Q_{n-1}(\varepsilon_1, \cdots,
\varepsilon_{n-1})+Q_{n-2}(\varepsilon_1, \cdots,
\varepsilon_{n-2}).
\end{eqnarray*}
($Q_n$ is commonly called a continuant.)
Then we have \\
\indent {\rm (i)}   $Q_n(\varepsilon_1, \cdots, \varepsilon_n)=Q_n(\varepsilon_n, \cdots, \varepsilon_1)$;\\
\indent {\rm (ii)}  $q_n=Q_n(a_1,\cdots, a_n)$,
$p_n=Q_{n-1}(a_2,\cdots, a_n)$.
\end{lem}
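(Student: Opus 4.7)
The plan is to handle part (ii) first by matching recursions via a one-line induction, and then attack part (i) through a $2\times2$ matrix-product representation whose transpose symmetry immediately yields the reversal identity.

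For (ii), I would substitute $\varepsilon_i = a_i$ into the definition and observe that $Q_n(a_1,\dots,a_n)$ then satisfies exactly the classical recurrence $q_n = a_n q_{n-1} + q_{n-2}$ with the same initial data $q_{-1}=0$, $q_0=1$, so induction on $n$ gives $q_n = Q_n(a_1,\dots,a_n)$. For the $p$-formula, I would set $R_k := Q_k(a_2,\dots,a_{k+1})$ and $S_k := p_{k+1}$; using the defining recursion of $Q_n$ and of $p_n$, one checks that both sequences obey $X_k = a_{k+1}X_{k-1}+X_{k-2}$ with $X_{-1}=0$, $X_0=1$. Uniqueness of the solution to this recurrence forces $S_k = R_k$, i.e.\ $p_n = Q_{n-1}(a_2,\dots,a_n)$.

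For (i), I would introduce the symmetric matrices
$$M_j := \begin{pmatrix} \varepsilon_j & 1 \\ 1 & 0 \end{pmatrix}$$
and prove by induction on $n$ the key identity
$$M_1 M_2 \cdots M_n = \begin{pmatrix} Q_n(\varepsilon_1,\dots,\varepsilon_n) & Q_{n-1}(\varepsilon_1,\dots,\varepsilon_{n-1}) \\ Q_{n-1}(\varepsilon_2,\dots,\varepsilon_n) & Q_{n-2}(\varepsilon_2,\dots,\varepsilon_{n-1}) \end{pmatrix}.$$
The base case $n=1$ is an immediate computation. The inductive step multiplies the assumed formula for $M_1\cdots M_{n-1}$ on the right by $M_n$; the four resulting entries collapse, via the defining recursion of $Q$ applied both to $(\varepsilon_1,\dots,\varepsilon_n)$ and to the shifted sequence $(\varepsilon_2,\dots,\varepsilon_n)$ (the latter being just another instance of the definition), to the claimed matrix for $n$.

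Once this matrix identity is established, (i) falls out from a single transposition. Since each $M_j$ is symmetric, $(M_1 M_2 \cdots M_n)^T = M_n M_{n-1} \cdots M_1$; the $(1,1)$ entry of the left-hand side is $Q_n(\varepsilon_1,\dots,\varepsilon_n)$, while the $(1,1)$ entry of the right-hand side, by applying the same matrix identity to the reversed sequence $(\varepsilon_n,\dots,\varepsilon_1)$, is $Q_n(\varepsilon_n,\dots,\varepsilon_1)$. Hence the two are equal. The only real obstacle is the careful bookkeeping of index shifts in the inductive step of the matrix identity; this is purely combinatorial, and the statement is classical (cited from \cite{Kr1}), so no deeper difficulty arises.
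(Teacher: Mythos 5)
The paper does not prove this lemma; it is quoted directly from Iosifescu--Kraaikamp, so there is no in-paper argument to compare against. Your proof is correct and is the standard one. For (ii), matching the defining recursion and initial values of $Q_n$ against those of $q_n$ (and, after the index shift you describe, against those of $p_n$) closes the induction immediately: with $R_k=Q_k(a_2,\dots,a_{k+1})$ and $S_k=p_{k+1}$ one has $R_{-1}=S_{-1}=0$, $R_0=S_0=1$, and both satisfy $X_k=a_{k+1}X_{k-1}+X_{k-2}$. For (i), the claimed identity
$$M_1\cdots M_n=\left(\begin{array}{cc} Q_n(\varepsilon_1,\dots,\varepsilon_n) & Q_{n-1}(\varepsilon_1,\dots,\varepsilon_{n-1})\\ Q_{n-1}(\varepsilon_2,\dots,\varepsilon_n) & Q_{n-2}(\varepsilon_2,\dots,\varepsilon_{n-1}) \end{array}\right)$$
is the classical continuant representation; the base case $n=1$ yields the matrix with entries $\varepsilon_1,1,1,0$, and in the inductive step the bottom row is correctly recognized as continuants of the shifted tuple $(\varepsilon_2,\dots)$. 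Since each factor $\left(\begin{smallmatrix}\varepsilon_j & 1\\ 1 & 0\end{smallmatrix}\right)$ is symmetric, transposition reverses the order of the product while leaving the $(1,1)$ entry unchanged, giving $Q_n(\varepsilon_1,\dots,\varepsilon_n)=Q_n(\varepsilon_n,\dots,\varepsilon_1)$ as claimed. No gaps.
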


As consequences, we have the following results.

\begin{lem}[\cite{Kh}]\label{l2.2} For any
$a_1,a_2,\cdots,a_n, b_1,\cdots, b_{m}\in \mathbb{N}$, let
$q_n=q_n(a_1,\cdots,a_n)$ and $p_n=p_n(a_1,\cdots,a_n)$. We have

 \mbox{\rm (i)} \ $p_{n-1}q_n-p_nq_{n-1}=(-1)^n$;

\mbox{\rm (ii)} \ $q_{n+m}(a_1,\cdots,a_n, b_1,\cdots,
b_{m})=q_n(a_1,\cdots,
a_n)q_{m}(b_1,\cdots,b_{m})+\\
\indent\ \ \ \ \ \ \ \ \
\qquad\qquad\qquad\qquad\qquad\qquad\qquad\qquad
q_{n-1}(a_1,\cdots,a_{n-1})p_{m-1}(b_1,\cdots,b_{m-1})$;

\mbox{\rm (iii)} \ $q_n\geq 2^{\frac{n-1}{2}}$, \ \ \
$\prod\limits_{k=1}^na_k\leq q_n\leq \prod\limits_{k=1}^n(a_k+1).$
\end{lem}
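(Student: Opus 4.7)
The plan is to prove each of the three parts in Lemma 2.2 via induction on $n$ using the recursive definition $p_n = a_n p_{n-1} + p_{n-2}$, $q_n = a_n q_{n-1} + q_{n-2}$, exploiting Lemma 2.1 for the continuant machinery in part (ii).

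For part (i), I would do a direct induction on $n$. The base case $n=0$ gives $p_{-1}q_0 - p_0 q_{-1} = 1\cdot 1 - 0\cdot 0 = 1 = (-1)^0$. For the inductive step, substitute the recursions to get
\[
p_{n-1}q_n - p_n q_{n-1} = p_{n-1}(a_n q_{n-1}+q_{n-2}) - (a_n p_{n-1}+p_{n-2}) q_{n-1} = -(p_{n-2}q_{n-1} - p_{n-1}q_{n-2}),
\]
and apply the induction hypothesis to turn $(-1)^{n-1}$ into $(-1)^n$ after the sign flip.

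For part (ii), the plan is to first prove the Euler continuant identity
\[
Q_{n+m}(\varepsilon_1,\dots,\varepsilon_{n+m}) = Q_n(\varepsilon_1,\dots,\varepsilon_n)\,Q_m(\varepsilon_{n+1},\dots,\varepsilon_{n+m}) + Q_{n-1}(\varepsilon_1,\dots,\varepsilon_{n-1})\,Q_{m-1}(\varepsilon_{n+2},\dots,\varepsilon_{n+m}),
\]
by induction on $m$, the base case $m=1$ being exactly the recursion defining $Q_{n+1}$. Applied to the concatenated sequence $(a_1,\dots,a_n,b_1,\dots,b_m)$, Lemma 2.1(ii) immediately identifies $Q_n(a_1,\dots,a_n) = q_n$, $Q_m(b_1,\dots,b_m) = q_m$ and $Q_{n-1}(a_1,\dots,a_{n-1}) = q_{n-1}$. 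The remaining factor $Q_{m-1}(b_2,\dots,b_m)$ is rewritten, using the symmetry Lemma 2.1(i) together with the $p_m = Q_{m-1}(a_2,\dots,a_m)$ identity, as $p_{m-1}(b_1,\dots,b_{m-1})$. This translation between the $p$ and $Q$ conventions is the only delicate step; I will keep indices explicit to avoid off-by-one errors, which is the main obstacle in part (ii).

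For part (iii), the estimate $q_n \geq 2^{(n-1)/2}$ follows by splitting into parities: since $a_k \geq 1$, the recursion gives $q_n \geq q_{n-1} + q_{n-2} \geq 2 q_{n-2}$, so iterating from $q_0=1$ and $q_1 \geq 1$ yields $q_n \geq 2^{\lfloor n/2 \rfloor} \geq 2^{(n-1)/2}$. The lower bound $\prod_{k=1}^n a_k \leq q_n$ follows by induction using $q_n \geq a_n q_{n-1}$, which is immediate from $q_{n-2} \geq 0$. The upper bound $q_n \leq \prod_{k=1}^n (a_k+1)$ follows from the induction-friendly estimate
\[
q_n = a_n q_{n-1} + q_{n-2} \leq a_n q_{n-1} + q_{n-1} = (a_n+1)\,q_{n-1},
\]
where we used $q_{n-2} \leq q_{n-1}$, itself a consequence of the recursion. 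None of these three bounds is expected to cause difficulty; the computation is entirely routine once the induction is set up.
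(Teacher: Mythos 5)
The paper gives no proof of Lemma~\ref{l2.2}; it is quoted from \cite{Kh}. So the only question is whether your argument is correct.

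Parts~(i) and~(iii) are fine: the induction in~(i) works (note that the sign flip uses the recursion, which although printed for $n\geq 2$ also holds at $n=1$, so you should either use the two base cases $n=0,1$ or observe that the $n=1$ recursion holds), and in~(iii) all three bounds follow as you say from $q_n\geq a_nq_{n-1}$, $q_{n-2}\leq q_{n-1}$, and $q_n\geq 2q_{n-2}$.

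Part~(ii) is where the problem lies, and it is exactly in the ``delicate step'' you flagged. After Euler's continuant identity the remaining factor is $Q_{m-1}(b_2,\dots,b_m)$, and you assert that this equals $p_{m-1}(b_1,\dots,b_{m-1})$. That is an off-by-one error: Lemma~\ref{Q-equ}(ii) states $p_k(c_1,\dots,c_k)=Q_{k-1}(c_2,\dots,c_k)$, so applying it to the sequence $(b_1,\dots,b_m)$ gives
\[
Q_{m-1}(b_2,\dots,b_m)=p_m(b_1,\dots,b_m),
\]
whereas $p_{m-1}(b_1,\dots,b_{m-1})=Q_{m-2}(b_2,\dots,b_{m-1})$, a continuant of length $m-2$, not $m-1$. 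Neither the symmetry relation $Q_{m-1}(b_2,\dots,b_m)=Q_{m-1}(b_m,\dots,b_2)$ nor any other application of Lemma~\ref{Q-equ} closes this gap. A concrete check with $n=1$, $m=2$: one has $q_3(a_1,b_1,b_2)=a_1b_1b_2+a_1+b_2$, while $q_1(a_1)q_2(b_1,b_2)+q_0\,p_1(b_1)=a_1(b_1b_2+1)+1=a_1b_1b_2+a_1+1$, which disagrees; by contrast $q_1q_2+q_0\,p_2(b_1,b_2)=a_1b_1b_2+a_1+b_2$ is correct. In other words, your derivation, carried out honestly, yields
\[
q_{n+m}(a_1,\dots,a_n,b_1,\dots,b_m)=q_n(a_1,\dots,a_n)q_m(b_1,\dots,b_m)+q_{n-1}(a_1,\dots,a_{n-1})p_m(b_1,\dots,b_m),
\]
and the step you inserted to match the printed index $p_{m-1}(b_1,\dots,b_{m-1})$ is false. (This in fact reveals an indexing misprint in the lemma as typeset in the paper; the usage later in the paper only needs $q_{n+m}\geq q_n q_m$ and so is unaffected, but your proof should derive the correct identity rather than force the incorrect one.)

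One smaller remark on~(ii): your induction on $m$ for the continuant identity is fine, but the inductive step uses the hypothesis at both $m$ and $m-1$, so you should state it as a two-step (or strong) induction with base cases $m=0$ and $m=1$.
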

\begin{lem}[\cite{Wu}]\label{l2.3}For any $a_1, a_2,\cdots, a_n, b\in \mathbb{N}$,
\begin{eqnarray*} \frac{b+1}{2}\leq \frac{q_{n+1}(a_1,\cdots, a_j, b,
a_{j+1}, \cdots, a_n)}{q_n(a_1,\cdots, a_j, a_{j+1},\cdots,
a_n)}\leq b+1 \qquad (\forall 1\leq j <n).
\end{eqnarray*}\end{lem}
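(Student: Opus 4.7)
The plan is to insert $b$ at position $j+1$ and split both continuants $q_n(a_1,\ldots,a_n)$ and $q_{n+1}(a_1,\ldots,a_j,b,a_{j+1},\ldots,a_n)$ via Lemma \ref{l2.2} (ii) (equivalently, via the matrix product $\prod_k \left(\begin{smallmatrix} a_k & 1 \\ 1 & 0 \end{smallmatrix}\right)$), so that both quantities are expressed in terms of the same four positive numbers depending only on the blocks $(a_1,\ldots,a_j)$ and $(a_{j+1},\ldots,a_n)$. The problem then reduces to an elementary algebraic inequality.

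Concretely, I introduce the abbreviations
\begin{eqnarray*}
P := q_j(a_1,\ldots,a_j), \quad P' := q_{j-1}(a_1,\ldots,a_{j-1}),\\
R := q_{n-j}(a_{j+1},\ldots,a_n), \quad R'' := q_{n-j-1}(a_{j+2},\ldots,a_n),
\end{eqnarray*}
noting that by Lemma \ref{Q-equ} (i)-(ii) the quantity $R''$ also equals $p_{n-j}(a_{j+1},\ldots,a_n)$. Splitting at position $j$, Lemma \ref{l2.2} (ii) then gives $q_n(a_1,\ldots,a_n)=PR+P'R''$. For the numerator, I split after the inserted $b$, treating $(a_1,\ldots,a_j,b)$ as the first block of length $j+1$, and use the basic recursion $q_{j+1}(a_1,\ldots,a_j,b)=bP+P'$ to obtain
\begin{eqnarray*}
q_{n+1}(a_1,\ldots,a_j,b,a_{j+1},\ldots,a_n)=(bP+P')R+PR''=bPR+P'R+PR''.
\end{eqnarray*}
Both inequalities then follow from the two identities
\begin{eqnarray*}
(b+1)(PR+P'R'')-(bPR+P'R+PR'') &=& (P-P')(R-R'')+bP'R'',\\
2(bPR+P'R+PR'')-(b+1)(PR+P'R'') &=& (b-1)(PR-P'R'')+2P'R+2(P-P')R'',
\end{eqnarray*}
both of which are nonnegative. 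Indeed, the recursion $q_k=a_kq_{k-1}+q_{k-2}$ with $a_k\ge 1$ gives $P\ge P'$ and $R\ge R''$, hence in particular $PR\ge P'R''$; and $b\ge 1$ together with positivity of all four quantities finishes both estimates.

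The only technical point I foresee is matching the conventions of Lemma \ref{l2.2} (ii): one must use Lemma \ref{Q-equ} (i)-(ii) to rewrite the $p$-continuants that appear on the right-hand side of the splitting identity as $q$-continuants of a shifted subsequence, so that all four comparison quantities $P,P',R,R''$ are manifestly positive continuants of sub-blocks of $(a_1,\ldots,a_n)$ and the monotonicities $P\ge P'$, $R\ge R''$ are transparent. Once this translation is made, no further estimation is needed; the bounds $(b+1)/2$ and $b+1$ drop out of one-line algebraic identities.
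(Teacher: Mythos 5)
Your proof is correct. The paper does not actually prove Lemma~\ref{l2.3}; it cites it from \cite{Wu}, so there is no in-paper argument to compare against. Your route via the continuant splitting identity (Lemma~\ref{l2.2}~(ii), together with the identification $p_{n-j}(a_{j+1},\ldots,a_n)=q_{n-j-1}(a_{j+2},\ldots,a_n)$ from Lemma~\ref{Q-equ}) is a natural and complete derivation. Writing
\begin{equation*}
q_n(a_1,\ldots,a_n)=PR+P'R'',\qquad
q_{n+1}(a_1,\ldots,a_j,b,a_{j+1},\ldots,a_n)=bPR+P'R+PR'',
\end{equation*}
with $P,P',R,R''$ as you define them, I verified both of your closing identities,
\begin{equation*}
(b+1)(PR+P'R'')-(bPR+P'R+PR'')=(P-P')(R-R'')+bP'R''
\end{equation*}
and
\begin{equation*}
2(bPR+P'R+PR'')-(b+1)(PR+P'R'')=(b-1)(PR-P'R'')+2P'R+2(P-P')R'',
\end{equation*}
and each term on the right is nonnegative because $b\geq 1$, $P\geq P'\geq 1$, $R\geq R''\geq 1$ (all four continuants are well-defined and positive for $1\leq j<n$). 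One small caution for the reader: Lemma~\ref{l2.2}~(ii) as printed in the paper has the subscript $p_{m-1}(b_1,\ldots,b_{m-1})$, whereas the correct splitting identity (which you implicitly use, since you identify $R''$ with $p_{n-j}(a_{j+1},\ldots,a_n)$) has $p_m(b_1,\ldots,b_m)$; you have silently used the correct form, which is the right call.
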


For any $a_1, a_2,\cdots, a_n\in \mathbb{N}$, let
\begin{equation} I_n(a_1, a_2, \cdots, a_n)=\{x\in [0,1): a_1(x)=a_1,
a_2(x)=a_2, \cdots, a_n(x)=a_n\}
\end{equation}
which is called an {\em $n$-th order cylinder}.

\begin{lem}[\cite{Kr1} p.18] \label{length}
For any $a_1, a_2,\cdots, a_n\in \mathbb{N}$, the $n$-th order
cylinder \\ $I_n(a_1, a_2, \cdots, a_n)$ is the interval with the
endpoints $p_n/q_n$ and $(p_n+p_{n-1})/(q_n+q_{n-1})$. As a
consequence, the length of $I_n(a_1,\cdots, a_n)$ is equal to
\begin{eqnarray}\label{length-interval}
\Big|I_n(a_1,\cdots, a_n)\Big|=\frac{1}{q_n(q_n+q_{n-1})}.
\end{eqnarray}
\end{lem}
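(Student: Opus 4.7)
The plan is to exploit the representation of any $x \in I_n(a_1,\ldots,a_n)$ as $x=[a_1,a_2,\ldots,a_n+T^n(x)]$ with $T^n(x)\in[0,1)$, so that the cylinder is parametrized by a single real parameter $t=T^n(x)$. First I would verify the Möbius-type identity
\begin{equation*}
[a_1,a_2,\ldots,a_n+t] \;=\; \frac{(a_n+t)p_{n-1}+p_{n-2}}{(a_n+t)q_{n-1}+q_{n-2}} \;=\; \frac{p_n+t\,p_{n-1}}{q_n+t\,q_{n-1}},
\end{equation*}
which follows directly from the recursion $p_n=a_n p_{n-1}+p_{n-2}$, $q_n=a_n q_{n-1}+q_{n-2}$ recalled above Lemma~\ref{Q-equ}. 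Let $\phi(t)=(p_n+tp_{n-1})/(q_n+tq_{n-1})$.

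Next I would argue that the map $\phi:[0,1]\to\mathbb{R}$ is a homeomorphism onto its image, so that the image is exactly the closed interval between $\phi(0)=p_n/q_n$ and $\phi(1)=(p_n+p_{n-1})/(q_n+q_{n-1})$. The key observation is
\begin{equation*}
\phi'(t) \;=\; \frac{p_{n-1}(q_n+tq_{n-1})-q_{n-1}(p_n+tp_{n-1})}{(q_n+tq_{n-1})^2} \;=\; \frac{p_{n-1}q_n - q_{n-1}p_n}{(q_n+tq_{n-1})^2} \;=\; \frac{(-1)^n}{(q_n+tq_{n-1})^2},
\end{equation*}
where the last equality uses Lemma~\ref{l2.2}(i). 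Hence $\phi$ is strictly monotone on $[0,1]$, with constant sign depending only on the parity of $n$, which both identifies the endpoints and tells us that $I_n(a_1,\ldots,a_n)$ is precisely the open interval $\phi((0,1))$ (together with one endpoint, coming from $T^n(x)=0$, if we allow rationals).

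Finally, the length claim is a direct computation:
\begin{equation*}
\bigl|I_n(a_1,\ldots,a_n)\bigr| \;=\; \left| \frac{p_n}{q_n} - \frac{p_n+p_{n-1}}{q_n+q_{n-1}} \right| \;=\; \frac{|p_n q_{n-1} - q_n p_{n-1}|}{q_n(q_n+q_{n-1})} \;=\; \frac{1}{q_n(q_n+q_{n-1})},
\end{equation*}
again by Lemma~\ref{l2.2}(i). I do not expect any serious obstacle here: the main subtlety is purely bookkeeping, namely confirming that the parameter range $t\in[0,1)$ produces exactly the cylinder (including attention to the boundary behavior at $t=1$, which corresponds to the rational endpoint $(p_n+p_{n-1})/(q_n+q_{n-1})$ not actually being realized by an irrational $x$, but still being a limit point of $I_n$), so that the closure gives the stated interval. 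Once the parametrization $\phi$ is in hand, monotonicity and the determinant identity do all the work.
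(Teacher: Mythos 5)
Your proof is correct. Note that the paper gives no proof of this lemma at all---it is cited directly from Iosifescu--Kraaikamp (\cite{Kr1}, p.~18) as a classical fact---so there is no in-paper argument to compare against. The argument you give (parametrizing the cylinder via the M\"obius map $\phi(t)=(p_n+tp_{n-1})/(q_n+tq_{n-1})$ on $t\in[0,1)$, using the determinant identity $p_{n-1}q_n - p_n q_{n-1}=(-1)^n$ from Lemma~\ref{l2.2}(i) to establish strict monotonicity and hence that the image is the interval between $\phi(0)=p_n/q_n$ and $\phi(1)=(p_n+p_{n-1})/(q_n+q_{n-1})$, then computing the length by the same determinant) is exactly the standard proof found in the cited reference. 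The only point worth stating a bit more carefully is the boundary case you flag yourself: for irrational $x$ the parameter $t=T^n(x)$ ranges over $(0,1)$, so $I_n$ restricted to irrationals is the open interval $\phi((0,1))$; which of the two rational endpoints is actually realized (if rationals are admitted) depends on the convention for finite expansions, but this does not affect the length formula, which follows from the endpoint identification alone.
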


We will denote $I_n(x)$ the $n$-th order cylinder that contains $x$,
i.e. $I_n(x)=I_n\big(a_1(x),\cdots,a_n(x)\big)$. Let $B(x,r)$
denotes the ball centered at $x$ with radius $r$. For any $x\in
I_n(a_1,\cdots,a_n)$, we have the following relationship between the
ball $B(x,|I_n(a_1,\cdots,a_n)|)$ and $I_n(a_1,\cdots,a_n)$, which
is called the {\em {regular property}} in \cite{BDK}.
\begin{lem}[\cite{BDK}] \label{l2.5}
Let $x=[a_1, a_2,\cdots]$. We have:\\
\indent \mbox{\rm (i)} \ if $a_n\neq 1$, $B(x,|I_n(x)|)\subset
\bigcup\limits_{j=-1}^3I_n(a_1,\cdots,a_n+j)$;\\
 \indent \mbox{\rm
(ii)} \ if $a_n=1$ and $a_{n-1}\neq 1$, $B(x,|I_n(x)|)\subset
\bigcup\limits_{j=-1}^3I_{n-1}(a_1,\cdots,a_{n-1}+j)$;\\
 \indent
\mbox{\rm (iii)} if $a_n=1$ and $a_{n-1}=1$, $B(x,|I_n(x)|)\subset
I_{n-2}(a_1,\cdots,a_{n-2})$.
\end{lem}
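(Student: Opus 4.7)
My plan would be to combine the explicit endpoint description in Lemma~\ref{length} with the length identity $|I_n(a_1,\ldots,a_n)|=1/(q_n(q_n+q_{n-1}))$ and the basic continuant recursion $q_n=a_nq_{n-1}+q_{n-2}$, so that every cylinder entering the statement can be placed on the line and compared with $|I_n(x)|$ by elementary arithmetic. First I would record the geometric layout: the sub-cylinders $\{I_n(a_1,\ldots,a_{n-1},k)\}_{k\geq 1}$ sit inside the parent $I_{n-1}(a_1,\ldots,a_{n-1})$ in monotone order and accumulate at the endpoint $p_{n-1}/q_{n-1}$ of $I_{n-1}$ (which side of $I_{n-1}$ this is depends on the parity of $n$). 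Using $q_n(a_1,\ldots,a_n+k)=q_n+kq_{n-1}$, I would then record the identity
\[
|I_n(a_1,\ldots,a_n+k)|=\frac{1}{(q_n+kq_{n-1})(q_n+(k+1)q_{n-1})},
\]
which telescopes on summation in $k$.

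Then I would treat the three cases separately. In case (i), i.e.\ $a_n\geq2$, the leftward half of $B(x,|I_n(x)|)$ is absorbed by the single neighbour $I_n(a_1,\ldots,a_n-1)$, whose length $1/((q_n-q_{n-1})q_n)$ dominates $|I_n(x)|$ because $q_n-q_{n-1}\leq q_n+q_{n-1}$; for the rightward half, the telescoping sum gives $\sum_{k=1}^{3}|I_n(a_1,\ldots,a_n+k)|=3/((q_n+q_{n-1})(q_n+4q_{n-1}))$, and the condition for this to exceed $|I_n(x)|$ reduces to $q_n\geq 2q_{n-1}$, which is exactly where $a_n\geq2$ is used. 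In case (ii) the leftward direction now exits $I_{n-1}(x)$ into $I_{n-1}(a_1,\ldots,a_{n-1}-1)$ across the shared endpoint, and since $a_n=1$ forces $|I_{n-1}(x)|/|I_n(x)|\in[2,3]$, I would simply rerun the case~(i) argument one level up, using $a_{n-1}\geq 2$ and the length ratios supplied by Lemma~\ref{l2.3}. In case (iii), the identities $q_n=2q_{n-2}+q_{n-3}$ and $q_{n-1}=q_{n-2}+q_{n-3}$ together with the length formula give both $|I_{n-1}(x)|-|I_n(x)|\geq|I_n(x)|$ and $|I_{n-2}(x)|-|I_{n-1}(x)|\geq 3|I_n(x)|$, so $x$ lies at distance $\geq|I_n(x)|$ from each endpoint of $I_{n-2}(a_1,\ldots,a_{n-2})$ and the ball is contained in that single cylinder.

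The point requiring care, and in my view the only real obstacle, is the orientation bookkeeping: the direction in which the $a_n$-indexed cylinders accumulate alternates with the parity of $n$, so the labels ``left'' and ``right'' used above must be organised symmetrically (or the argument stated once per parity) so as to obtain the same cover of $B(x,|I_n(x)|)$ regardless of orientation. Beyond that, everything is a routine exercise in the continuant recursion, with all the heavy lifting already contained in Lemmas~\ref{Q-equ}, \ref{l2.3}, and~\ref{length}; no new ingredient is required.
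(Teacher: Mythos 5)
The paper does not prove this lemma; it is quoted from \cite{BDK} with no argument supplied, so there is no in-paper proof to compare yours against. Your self-contained plan is correct, and I checked the arithmetic: the telescoping identity gives $\sum_{k=1}^3|I_n(a_1,\ldots,a_n+k)|=3/\bigl((q_n+q_{n-1})(q_n+4q_{n-1})\bigr)$, which dominates $|I_n(x)|=1/\bigl(q_n(q_n+q_{n-1})\bigr)$ exactly when $q_n\geq 2q_{n-1}$, i.e.\ when $a_n\geq 2$; the single neighbour $I_n(a_1,\ldots,a_n-1)$ always has length at least $|I_n(x)|$ since $q_n-q_{n-1}\leq q_n+q_{n-1}$; and in case (iii), writing $q_n=2q_{n-2}+q_{n-3}$ and $q_{n-1}=q_{n-2}+q_{n-3}$, your two inequalities reduce to $q_{n-2}/q_{n-1}>0$ and $2q_{n-3}\geq 0$.

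One orientation slip in case (ii), which you pre-emptively flagged as the only real obstacle: when $a_n=1$ the cylinder $I_n(a_1,\ldots,a_{n-1},1)$ shares the endpoint $(p_{n-1}+p_{n-2})/(q_{n-1}+q_{n-2})$ with $I_{n-1}(x)$, and that is the endpoint $I_{n-1}(x)$ shares with $I_{n-1}(a_1,\ldots,a_{n-1}+1)$, not with $I_{n-1}(a_1,\ldots,a_{n-1}-1)$. Moreover $|I_{n-1}(x)|/|I_n(x)|=2+q_{n-2}/q_{n-1}>2$, so the ball never reaches the $p_{n-1}/q_{n-1}$ endpoint; it can exit $I_{n-1}(x)$ only into $I_{n-1}(a_1,\ldots,a_{n-1}+1)$, whose length dominates $|I_n(x)|$ because $q_{n-1}+2q_{n-2}\leq 2q_{n-1}+q_{n-2}$. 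Your ``rerun case (i) one level up'' step is unaffected, since it covers both sides of $I_{n-1}(x)$ and either neighbour lies in $\bigcup_{j=-1}^{3}$; but once the direction is corrected you will notice the inclusion is strictly tighter than (ii) asserts, namely $B(x,|I_n(x)|)\subset I_{n-1}(x)\cup I_{n-1}(a_1,\ldots,a_{n-1}+1)\subset I_{n-2}(a_1,\ldots,a_{n-2})$ whenever $a_n=1$, with no hypothesis on $a_{n-1}$. Apart from that directional correction, I see no gap.
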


 The Gauss transformation $T$ admits the following $\it
Jacobian \ estimate $.
\begin{lem}\label{p2}
There exists a positive number $K>1$ such that for all irrational
number $x$ in $[0,1)$, one has
\begin{eqnarray*}
0<\frac{1}{K}\leq \sup_{n\geq 0}\sup_{y\in
I_n(x)}\left|\frac{(T^n)'(x)}{(T^n)'(y)}\right|\leq K<\infty.
\end{eqnarray*}
\end{lem}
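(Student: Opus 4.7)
The plan is to obtain an explicit closed form for $|(T^n)'(x)|$ on each cylinder $I_n(a_1,\dots,a_n)$ in terms of the continuants, and then show that this closed form depends only weakly on the point chosen inside the cylinder, because the dependence is through $T^n(x) \in [0,1]$.

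First I would establish the identity
$$|(T^n)'(x)| = \bigl(q_n + T^n(x)\,q_{n-1}\bigr)^2, \qquad x \in I_n(a_1,\dots,a_n).$$
This is a standard consequence of the Möbius representation $x = [a_1,\dots,a_n + T^n x] = (p_n + T^n(x)\,p_{n-1})/(q_n + T^n(x)\,q_{n-1})$: writing $y = T^n(x)$, the map $y \mapsto x$ is a Möbius transformation whose derivative equals $(-1)^n/(q_n+y\,q_{n-1})^2$ thanks to Lemma 2.2(i), so $|(T^n)'(x)| = |dy/dx| = (q_n + y\,q_{n-1})^2$.

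Next I would use the formula to compare the values at two points $x, y$ sharing the same first $n$ partial quotients. Since $T^n(x), T^n(y) \in [0,1]$, both quantities $q_n + T^n(x)q_{n-1}$ and $q_n + T^n(y)q_{n-1}$ lie in the interval $[q_n, q_n + q_{n-1}]$. The recursion $q_n = a_n q_{n-1} + q_{n-2}$ with $a_n \geq 1$ gives $q_{n-1} \leq q_n$, hence $q_n + q_{n-1} \leq 2 q_n$, and therefore
$$\frac{1}{2} \;\leq\; \frac{q_n + T^n(x)\,q_{n-1}}{q_n + T^n(y)\,q_{n-1}} \;\leq\; 2.$$
Squaring immediately produces $\tfrac{1}{4} \leq |(T^n)'(x)|/|(T^n)'(y)| \leq 4$, so one may take $K = 4$ (and the case $n=0$ is trivial since $T^0 = \mathrm{id}$).

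There is essentially no obstacle here: once the explicit formula for $(T^n)'(x)$ on a cylinder is written down, the bounded distortion bound drops out by comparing two numbers in $[q_n, 2q_n]$. The only minor care needed is that the supremum in the statement is over $y \in I_n(x)$ for irrational $x$, which keeps $T^n(x), T^n(y)$ in $[0,1)$, ensuring that the Möbius formula applies without any endpoint ambiguity.
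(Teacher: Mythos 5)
Your proof is correct, and it takes a genuinely different and cleaner route than the paper's. The paper works with the chain rule expansion $\log|(T^n)'(x)| = \sum_{j=0}^{n-1}\log|T'(T^j x)|$, applies the mean-value theorem to each term $|\log T^j(x) - \log T^j(y)|$, bounds each by roughly $1/q_{n-j-1}$ (a geometrically decaying quantity thanks to Lemma~\ref{l2.2}(iii)), and sums the series to get $K=e^4$. You instead exploit the global M\"obius structure: $x = (p_n + T^n(x)p_{n-1})/(q_n + T^n(x)q_{n-1})$ on the cylinder yields the exact identity $|(T^n)'(x)| = (q_n + T^n(x)q_{n-1})^2$ via Lemma~\ref{l2.2}(i), after which bounded distortion reduces to the triviality that $q_n + \theta q_{n-1} \in [q_n, 2q_n]$ for $\theta\in[0,1]$, giving the sharp constant $K=4$. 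Your method avoids the summation entirely and produces a better constant; the paper's is more in the spirit of the general bounded-distortion argument one sees for expanding maps, where no closed-form derivative is available. Both are valid; yours is the more economical for this specific system.
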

\begin{proof} \ Assume $x=[a_1, \cdots, a_n, \cdots]\in [0,1) \setminus
\mathbb{Q}$. For any $n\geq 0$ and $y\in I_n(x)=I_n(a_1,\cdots,
a_n)$, by the fact that $T'(x)=- \frac{1}{x^2} $ we get
\begin{eqnarray*}
\sum_{j=0}^{n-1}\Big|\log \big|T'(T^j(x))\big|-\log
\big|T'(T^j(y))\big|\Big|=2\sum_{j=0}^{n-1}\Big|\log T^j(x)-\log
T^j(y)\Big|.
\end{eqnarray*}
Applying the mean-value theorem, we have
$$\big|\log T^j(x)-\log
T^j(y)\big|=\left|\frac{T^j(x)-T^j(y)}{T^j(z)}\right|\leq
\frac{a_{j+1}}{q_{n-j}(a_{j+1},\cdots, a_n)},$$
 where the assertion follows from the fact that all three points
 $T^j(x), T^j(y)$ and $T^j(z)$ belong to $I_{n-j}(a_{j+1}, \cdots,
 a_n)$. By Lemma \ref{l2.2}, we have
\begin{eqnarray*} \sum_{j=0}^{n-1}\left|\log T^j(x)-\log
T^j(y)\right|\leq\sum_{j=0}^{n-1}\frac{1}{q_{n-j-1}(a_{j+2},\cdots,
a_n)}\leq \sum_{j=0}^{n-1}\left(\frac{1}{2}\right)^{n-j-2}\leq 4.
\end{eqnarray*}
Thus the  result is proved with $K=e^4$. \end{proof}

The above Jacobian estimate property of $T$ enables us to control
the length of $I_n(x)$ by $|(T^n)'(x)|^{-1}$, through the fact that
$ \int_{I_n(x)} |(T^n)'(y)| dy =1$.
\begin{lem}\label{lemma2.5}
There exist a positive constant $K>0$ such that for all irrational
numbers $x$ in $[0,1)$,
\begin{eqnarray*} \frac{1}{K}\leq
\frac{|I_n(x)|}{|(T^n)'(x)|^{-1}}\leq K.
\end{eqnarray*}
\end{lem}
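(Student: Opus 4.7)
The plan is to combine the Jacobian distortion estimate from Lemma~\ref{p2} with the exact change-of-variables identity $\int_{I_n(x)} |(T^n)'(y)|\, dy = 1$, which is precisely the hint given in the text immediately preceding the lemma. This identity holds because $T^n$ restricted to the cylinder $I_n(a_1,\ldots,a_n)$ is a bijection (up to a countable set) onto $[0,1)$: indeed, for any $y\in I_n(a_1,\ldots,a_n)\setminus\mathbb{Q}$ one has $T^n(y)=[a_{n+1}(y),a_{n+2}(y),\ldots]$ with no restriction on the tail, and the inverse branch is smooth, so the usual change of variables applies and yields measure $1$.

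Once this identity is in hand, the argument is a two-line squeeze. By Lemma~\ref{p2}, there is a constant $K>1$ such that for every $y\in I_n(x)$,
\begin{equation*}
\frac{|(T^n)'(x)|}{K}\leq |(T^n)'(y)|\leq K\,|(T^n)'(x)|.
\end{equation*}
Integrating this chain of inequalities over $I_n(x)$ and using $\int_{I_n(x)}|(T^n)'(y)|\,dy=1$, I obtain
\begin{equation*}
\frac{|(T^n)'(x)|}{K}\,|I_n(x)|\;\leq\;1\;\leq\;K\,|(T^n)'(x)|\,|I_n(x)|,
\end{equation*}
which after rearrangement is exactly the desired two-sided bound $K^{-1}\leq |I_n(x)|\cdot|(T^n)'(x)|\leq K$.

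There is essentially no obstacle beyond being careful about the bijectivity statement for $T^n$ on the cylinder, which is standard for the Gauss map. The constant $K$ appearing in the conclusion is the same as (or a harmless power of) the distortion constant from Lemma~\ref{p2}; if preferred one could write $K^2$ to avoid reusing the symbol, but since the lemma only asserts existence of \emph{some} positive constant, keeping $K$ is fine. No appeal to the finer Diophantine estimates of Lemmas~\ref{l2.2}--\ref{length} is needed, although one could instead give a purely combinatorial proof using $|I_n(x)|=1/(q_n(q_n+q_{n-1}))$ from Lemma~\ref{length} together with the explicit formula $(T^n)'(x)=(-1)^n/(q_n x+q_{n-1})^{-2}\cdot(\text{something})$; however the bounded-distortion route above is shorter and matches the spirit in which the lemma is stated.
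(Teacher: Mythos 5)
Your proof is correct and takes exactly the approach the paper intends: the text immediately preceding the lemma indicates that the bound follows from the Jacobian estimate (Lemma~\ref{p2}) together with the change-of-variables identity $\int_{I_n(x)}|(T^n)'(y)|\,dy=1$, and your squeeze argument carries this out cleanly. One small remark: you raise the possibility that the final constant might be $K^2$, but your own derivation shows it is in fact the same $K$ as in Lemma~\ref{p2}; integrating $|(T^n)'(x)|/K \leq |(T^n)'(y)| \leq K|(T^n)'(x)|$ over $I_n(x)$ gives the two-sided bound $K^{-1}\leq |I_n(x)|\,|(T^n)'(x)|\leq K$ directly, with no loss of a factor.
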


We remark that from Lemma \ref{length} and Lemma \ref{lemma2.5}, we
have
\begin{eqnarray*}
\frac{1}{2K}q_n^2(x)\leq\Big|(T^n)'(x)\Big|\leq Kq_n^2(x).
\end{eqnarray*}
So the Lyapunov exponent $\lambda(x)$ is nothing but the growth rate
of $q_n(x)$ up to a multiplicative constant $2$:
\begin{eqnarray*}
\lambda(x)=\lim_{n\to \infty} \frac{2}{n} \log q_n(x).
\end{eqnarray*}

For any irrational number $x$ in $[0,1) $, let $N_n(x):= \{j\leq n:
a_j(x)\neq 1 \} $. Set
\begin{eqnarray*}
& &A:=\Big\{ x\in [0,1]: \lim_{n\to \infty} \frac{1}{n}\log
q_n(x)=\frac{\gamma_0}{2} \Big\},\\
& &B:=\Big\{ x\in [0,1]: \lim_{n\to \infty}
\frac{1}{n}\sum_{j=1}^{n}
\log a_j(x) =0 \Big\},\\
& &C:=\Big\{ x\in [0,1]: \lim_{n\to \infty} \frac{1}{n}\sharp N_n(x)
=0 \Big\},
\end{eqnarray*}
where $\sharp$ stands for the cardinal of a set. Then we have the
following relationship.
\begin{lem}\label{ABC}
With the notations given above, we have
\begin{eqnarray*}
A=B \subset C.
\end{eqnarray*}
\end{lem}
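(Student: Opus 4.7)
The plan is to prove three implications, $B\subset C$, $B\subset A$, and $A\subset B$; together they yield $A=B\subset C$. The first is immediate from $\log a_j\ge (\log 2)\,\mathbf{1}_{a_j\ne 1}$, which gives $L_n(x):=\sum_{j\le n}\log a_j(x)\ge (\log 2)\,\sharp N_n(x)$, so $L_n/n\to 0$ forces $\sharp N_n/n\to 0$.

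For the identification $A=B$, the main tool is the decomposition $\log q_n(x)=\sum_{k=1}^n\log r_k$, where $r_k:=q_k/q_{k-1}$ obeys the recurrence $r_k=a_k+1/r_{k-1}$. Setting $\phi:=(1+\sqrt{5})/2$, so that $\gamma_0/2=\log\phi$, the crucial observation is that within any maximal run of $m$ consecutive $1$'s among the partial quotients, the iteration $r_k=1+1/r_{k-1}$ contracts toward $\phi$ at geometric rate: the identity $r_k-\phi=(\phi-r_{k-1})/(\phi\,r_{k-1})$ together with $r_{k-1}\ge 1$ gives $|r_k-\phi|\le \phi^{-1}|r_{k-1}-\phi|$, so after $m$ iterations $|r_k-\phi|\le \phi^{-m}$ uniformly in the starting state. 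Summing produces $\sum_{\mathrm{run}}\log r_k=m\log\phi+O(1)$ with a universal constant. Combined with the elementary estimate $\log r_k=\log a_k+O(1/a_k)$ at positions where $a_k\ge 2$, this yields the asymptotic identity
$$
\log q_n(x)=n\log\phi+L_n(x)-(\log\phi)\,\sharp N_n(x)+O\bigl(\sharp N_n(x)\bigr),
$$
with an absolute $O$-constant. The inclusion $B\subset A$ is then immediate: if $L_n/n\to 0$, then $\sharp N_n/n\to 0$ by the preceding paragraph, so the right-hand side equals $n\log\phi+o(n)$, i.e.\ $(\log q_n)/n\to \log\phi$ and $x\in A$.

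For the reverse direction $A\subset B$, the inequality $\sharp N_n\le L_n/\log 2$ rearranges the displayed identity into
$\log q_n-n\log\phi\ge L_n\bigl(1-(\log\phi+C)/\log 2\bigr)-C'$
for absolute constants $C,C'>0$. The main obstacle is to show that the implicit constant $C$ in the $O(\sharp N_n)$ error can be taken strictly smaller than $\log 2-\log\phi=\log(2/\phi)$, so that the coefficient of $L_n$ is strictly positive; once this is done, $(\log q_n)/n\to \log\phi$ directly forces $L_n/n\to 0$, completing the proof. The sharp bound on $C$ is secured by grouping each non-$1$ position $k$ with its ensuing run of $1$'s and exploiting the recurrence to show that the combined contribution $\log r_k+\sum_{\ell\ge 1}(\log r_{k+\ell}-\log\phi)$ stays at least $\log a_k-c$ with $c<\log(2/\phi)$, uniformly in the preceding ratio $r_{k-1}\ge 1$; this uses that the surplus $\log a_k-\log\phi\ge \log(2/\phi)$ generated by any $a_k\ge 2$ dominates the (uniformly bounded) deviation coming from the subsequent $1$-block.
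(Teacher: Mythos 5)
Your route — writing $\log q_n=\sum_{k\le n}\log r_k$ with $r_k=q_k/q_{k-1}$, analyzing the contraction of $r\mapsto 1+1/r$ along $1$-runs, and packaging the result as an asymptotic identity — is genuinely different from the paper's, which works with the continuant-insertion estimate (Lemma~2.3), compares $q_n$ to $q_{n-\sharp N_n}(1,\dots,1)$, and invokes the assertion ``$A\subset C$.'' Your proofs of $B\subset C$ and $B\subset A$ are correct and parallel the paper's in substance.

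The gap is in $A\subset B$, and specifically in the claimed constant. Writing $\phi=(1+\sqrt5)/2$, you need the per-group deficit to satisfy $c<\log 2-\log\phi\approx 0.212$, and you assert that each group (one $a_k\ne 1$ together with its ensuing run of $1$'s of length $m$) satisfies $\log r_k+\sum_{\ell=1}^m(\log r_{k+\ell}-\log\phi)\ge \log a_k-c$ with such a $c$, uniformly in $r_{k-1}\ge 1$. This is false. Since $\prod_{\ell=0}^m r_{k+\ell}=q_{k+m}/q_{k-1}=F_{m+1}r_k+F_m$ (with $F_j$ the Fibonacci numbers), the combined contribution equals $\log(F_{m+1}r_k+F_m)-m\log\phi$. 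Take $a_k=M\to\infty$, $r_{k-1}\to\infty$ (e.g.\ two large partial quotients in a row) and $m=1$: then $r_k\to M$, the combined contribution is $\log(r_k+1)-\log\phi=\log M+o(1)-\log\phi$, and the deficit $\log a_k-(\cdot)$ tends to $\log\phi\approx 0.481$. Already the long-run version ($m\to\infty$) gives a deficit tending to $\tfrac12\log 5-\log\phi\approx 0.324$. Both exceed $\log(2/\phi)$, so $1-(\log\phi+c)/\log 2$ is not positive and the chain of inequalities collapses; the asserted ``surplus dominates the deviation'' does not hold, precisely because the trivial bound $\sharp N_n\log 2\le L_n$ is far too lossy when the $a_k$'s are large.

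What does work with your machinery is the opposite normalization: show a positive lower bound for each group's \emph{gain over the Fibonacci baseline}, namely $(\log r_k-\log\phi)+\sum_{\ell=1}^m(\log r_{k+\ell}-\log\phi)=\log(F_{m+1}r_k+F_m)-(m+1)\log\phi\ge \log F_{m+3}-(m+1)\log\phi\ge\log 3-2\log\phi>0$ (using $r_k\ge 2$ and $2F_{m+1}+F_m=F_{m+3}$), while the initial run and each $m=0$ group contribute at least $-\log\phi$ and $\log 2-\log\phi$ respectively. Summing yields $\log q_n-n\log\phi\ge \delta\,\sharp N_n-O(1)$ with $\delta=\log 3-2\log\phi>0$, which proves $A\subset C$; then $\sharp N_n=o(n)$ turns your identity $\log q_n=n\log\phi+L_n-(\log\phi)\sharp N_n+O(\sharp N_n)$ into $L_n=o(n)$, giving $A\subset B$. (This also supplies a proof of the paper's unjustified ``$A\subset C$'' step, which does not follow from the inequality $q_n\ge q_{n-\sharp N_n}(1,\dots,1)\prod(a_j+1)/2$ alone, as $\log\tfrac32<\log\phi$.)
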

\begin{proof} It is clear that $A \subset C $ and $B\subset C $. Let
us prove $A=B $. First observe that, by Lemma \ref{l2.3}, we have
\begin{eqnarray*}
 \frac{1}{n} \log q_n(x) &\geq& \frac{1}{n} \sum_{j\in N_n(x)} \log \frac{a_j(x)+1}{2} + \frac{1}{n} \log q_{n-\sharp
 N_n}(1,\dots,1)\\
 &\geq&\frac{1}{n} \sum_{j\in N_n(x)} \log a_j(x) - \frac{1}{n}\sum_{j\in N_n(x)} \log 2 + \frac{1}{n} \log q_{n-\sharp
 N_n}(1,\dots,1).
\end{eqnarray*}
Assume $x\in A$. Since $A\subset C $, we have
\begin{eqnarray*}
 - \frac{1}{n}\sum_{j\in N_n(x)} \log 2 +
\frac{1}{n} \log q_{n-\sharp
 N_n}(1,\dots,1) \longrightarrow  0+\frac{\gamma_0}{2} \qquad (n\to \infty).
\end{eqnarray*}
Now by the assumption  $x\in A$, it follows
\begin{eqnarray*}
 \lim_{n\to \infty}
\frac{1}{n}\sum_{j=1}^{n} \log a_j(x) =\frac{1}{n} \sum_{j\in
N_n(x)} \log a_j(x)=0 .
\end{eqnarray*}
Therefore we have proved $A \subset B $. For the inverse inclusion,
notice that
\begin{eqnarray*}
 \frac{1}{n} \log q_n(x) \leq \frac{1}{n} \sum_{j\in N_n(x)} \log (a_j(x)+1) + \frac{1}{n} \log q_{n-\sharp
 N_n}(1,\dots,1).
\end{eqnarray*}
Let $x\in B$. Since $B\subset C$, we have
\begin{eqnarray*}
 \lim_{n\to \infty} \frac{1}{n} \log q_{n-\sharp
 N_n}(1,\dots,1)= \frac{\gamma_0}{2}.
\end{eqnarray*}
Therefore by the assumption  $x\in B$, we get
\begin{eqnarray*}
\limsup_{n\to \infty}\frac{1}{n}\log q_n(x)\leq \frac{\gamma_0}{2}.
\end{eqnarray*}
Thus $B\subset A$.
\end{proof}

\subsection{Exponents $\gamma(x)$ and $\lambda(x)$}

In this subsection, we make a quick examination of the Khintchine
exponent $\gamma(x)$ and compare it with the Lyapunov exponent
$\lambda(x)$. Our main concern is the possible values of both
exponent functions.

A first observation is that for any $x\in [0,1)$, $\gamma(x)\geq 0 $
and $\lambda(x) \geq \gamma_0=2\log \frac{\sqrt{5}+1}{2} $. By the
Birkhoff ergodic theorem, we know that the Khintchine exponent
$\gamma(x)$ attains the value $\xi_0$ for almost all points $x$ with
respect to the Lebesgue measure. We will show that every positive
number is the Khintchine exponent $\gamma(x)$ of some point $x$.

\begin{pro}\label{p1}
For any $\xi \geq 0$, there exists a point $x_0\in [0,1)$ such that
$\gamma(x_0)=\xi$.
\end{pro}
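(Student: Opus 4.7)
\textbf{Proof plan for Proposition~\ref{p1}.} The plan is to construct, for each $\xi \geq 0$, an irrational $x_0 = [a_1, a_2, \ldots] \in [0,1)$ by prescribing its partial quotients in a greedy fashion so that the Ces\`aro averages of $\log a_j$ converge to the target value $\xi$. The case $\xi = 0$ is immediate: take $a_j \equiv 1$ (so $x_0 = (\sqrt{5}-1)/2$), and then $\frac{1}{n}\sum_{j=1}^n \log a_j = 0$ for every $n$.

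For $\xi > 0$, fix any integer $N \geq 2$ with $\log N > \xi$. Define the sequence $(a_n)_{n\geq 1}$ recursively together with the partial sums $S_n := \sum_{j=1}^n \log a_j$ (and $S_0 := 0$) by the rule
\begin{equation*}
a_n = \begin{cases} N & \text{if } S_{n-1} < (n-1)\xi, \\ 1 & \text{if } S_{n-1} \geq (n-1)\xi. \end{cases}
\end{equation*}
Setting $T_n := S_n - n\xi$, the two cases become: if $T_{n-1} < 0$ then $T_n = T_{n-1} + (\log N - \xi)$, and if $T_{n-1} \geq 0$ then $T_n = T_{n-1} - \xi$. Thus $T_n$ is pushed upward by a fixed positive amount whenever it is negative and downward by a fixed positive amount whenever it is non-negative.

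The key step is to verify that $|T_n|$ stays bounded. A short case analysis shows that once $T_{n-1} \geq 0$, the next value satisfies $T_n \geq -\xi$; and once $T_{n-1} < 0$, we get $T_n < \log N - \xi$. Combining these with an induction on $n$, one obtains the uniform bound
\begin{equation*}
|T_n| \leq \max(\xi,\ \log N - \xi) \qquad \text{for all } n \geq 1.
\end{equation*}
Dividing by $n$ yields $\big|\tfrac{1}{n}\sum_{j=1}^n \log a_j - \xi\big| = |T_n|/n \to 0$, so the point $x_0 := [a_1, a_2, \ldots]$ has $\gamma(x_0) = \xi$.

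There is no substantive obstacle: the argument is a discrete analogue of the intermediate value theorem for partial averages, and all the estimates are elementary. The only mild point of care is the boundedness of $T_n$, which is what forces the averages (rather than just the partial sums) to converge; everything else is a direct computation.
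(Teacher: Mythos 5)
Your proposal is correct. The greedy dithering construction — always choosing $a_n \in \{1, N\}$ so as to push the running deviation $T_n = S_n - n\xi$ back toward zero — does indeed keep $|T_n|$ bounded, and the inductive verification goes through as you sketch (with $T_0 = 0$, $T_1 = -\xi$, and the two cases preserving the bound $\max(\xi, \log N - \xi)$ because $0 < \xi < \log N$). Dividing by $n$ then gives $\gamma(x_0)=\xi$, and the $\xi=0$ case is trivially handled by the golden ratio.

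This is a genuinely different construction from the paper's. The paper takes a sparse sequence of positions $n_1 < n_2 < \cdots$ with $n_{k+1}-n_k \to \infty$ and $n_k/n_{k+1} \to 1$, sets $a_{n_k} \approx e^{(n_k - n_{k-1})\xi}$, and sets all other partial quotients equal to $1$; the Ces\`aro average then converges to $\xi$ by a direct sandwich estimate. Your construction uses only two values $1$ and $N$ and needs no auxiliary sequence $n_k$, so it is arguably more self-contained. What the paper's construction buys, and yours does not, is its reuse in the proof of Proposition~\ref{l6}: the specific form of the paper's $x_0$ (all ones with occasional spikes) lets them apply Lemma~\ref{l2.3} to compute $\lambda(x_0) = 2\xi + 2\log\frac{1+\sqrt{5}}{2}$ explicitly, which is what feeds the comparison with the second number $x_1$. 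Your $x_0$ would also have a computable Lyapunov exponent (it is a quasi-periodic mixture of $1$'s and $N$'s), but the formula would be different and would require a separate calculation. For Proposition~\ref{p1} in isolation, your argument is fully adequate.
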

\begin{proof} Assume $\xi>0$ ( for $\xi=0$, we take
$x_0=\frac{1+\sqrt{5}}{2}$ corresponding to $a_n\equiv 1$.) Take an
increasing sequence of integers $\{n_k\}_{k\geq 1}$ satisfying
$$n_0=1, \ \ n_{k+1}-n_k\to \infty, \ {\rm{and}}\  \frac{n_k}{n_{k+1}}\to 1,
{\rm{as}} \ \ k\to \infty.$$ Let $x_0\in (0,1)$ be a point whose
partial quotients satisfy
\begin{eqnarray*}
 e^{(n_k-n_{k-1})\xi}\leq a_{n_k}\leq
e^{(n_k-n_{k-1})\xi}+1; \ \ \ a_n=1 \ {\rm{otherwise}}.
\end{eqnarray*}
Since for $ n_k \leq n < n_{k+1}$,
\[ \frac{1}{n_{k+1}} \sum_{i=1}^{k}
\log e^{(n_i-n_{i-1})\xi} \leq \frac{1}{n}\sum_{j=1}^{n} \log a_j
\leq \frac{1}{n_k} \sum_{i=1}^{k} \log (e^{(n_i-n_{i-1})\xi}+1),
\]
we have
\[ \gamma(x_0)=\lim_{n\to \infty} \frac{1}{n}\sum_{j=1}^n \log a_j(x)= \xi. \]
\end{proof}

\smallskip
In the following, we will show that the set $E_\xi$ and $F_\lambda$
are never equal. So it is two different problems to study
$\gamma(x)$ and $\lambda(x)$. However, as we will see,
$E_{\xi}(\phi)=F_{2\xi}(\phi)$ when $\phi$ is faster than $n$.
\begin{pro}\label{l6}
For any $\xi\geq 0$ and $\lambda\geq 2\log \frac{\sqrt{5}+1}{2}$, we
have $E_\xi\neq F_\lambda$.
\end{pro}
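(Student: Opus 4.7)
The plan is to exhibit, for each admissible pair $(\xi,\lambda)$, a witness point in $E_\xi \setminus F_\lambda$. The starting point is the bound $\gamma(x) \le \lambda(x)/2$, which follows from $q_n(x) \ge \prod_{k=1}^n a_k(x)$ in Lemma \ref{l2.2}(iii) together with $|(T^n)'(x)| \asymp q_n(x)^2$. Consequently, if $\lambda < 2\xi$ then $E_\xi \cap F_\lambda = \emptyset$, and since $E_\xi$ is nonempty by Proposition \ref{p1} the conclusion is immediate. From now on I assume $\lambda \ge 2\xi$.

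The main step is to produce two points $x_1, x_2 \in E_\xi$ whose Lyapunov exponents \emph{differ}, so that for any given $\lambda$ at least one of them has Lyapunov exponent different from $\lambda$ and therefore lies in $E_\xi \setminus F_\lambda$. For the first point $x_1$ I take the sparse construction from the proof of Proposition \ref{p1}: $a_{n_k}(x_1)$ is the integer nearest $e^{(n_k-n_{k-1})\xi}$ and $a_j(x_1)=1$ otherwise, with $n_{k+1}/n_k \to 1$ and $n_{k+1}-n_k \to \infty$. Between special indices the recursion $q_n = q_{n-1}+q_{n-2}$ yields Fibonacci growth at asymptotic rate $\log \phi$ per step, where $\phi=(1+\sqrt 5)/2$; at each special index, $q_{n_k} = a_{n_k}q_{n_k-1}+q_{n_k-2} \sim a_{n_k}q_{n_k-1}$ contributes an extra $\log a_{n_k} \approx (n_k-n_{k-1})\xi$. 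Telescoping to the index $n_K$ gives $\log q_{n_K}(x_1) = n_K\xi + (n_K-K)\log\phi + o(n_K)$; the hypothesis $n_{k+1}-n_k \to \infty$ forces $K/n_K \to 0$, and a sandwich argument handles intermediate $n$, so one obtains $\lambda(x_1) = 2\xi + \gamma_0$.

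For the second point $x_2$ I take (almost) constant integer partial quotients equal to $c \approx e^\xi$. The recursion $q_n = c q_{n-1}+q_{n-2}$ gives $\log q_n(x_2)/n \to \log \rho$ where $\rho = (c+\sqrt{c^2+4})/2$, hence $\lambda(x_2) = 2\log \rho$. The elementary inequality $\sqrt{c^2+4} < c\sqrt 5$ (valid for all $c > 1$, equivalent to $c^2 > 1$) translates into $\rho < c\phi$, so $\lambda(x_2) < 2\log c + \gamma_0$. Choosing the partial quotients of $x_2$ so that $\gamma(x_2)=\xi$ then gives $\lambda(x_2) < 2\xi + \gamma_0 = \lambda(x_1)$. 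Since $\lambda(x_1) \ne \lambda(x_2)$, for every given $\lambda$ at least one of the two points supplies a witness in $E_\xi \setminus F_\lambda$, completing the proof.

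The main technical obstacle is arranging $\gamma(x_2) = \xi$ \emph{exactly} for an arbitrary real $\xi > 0$ while preserving the strict eigenvalue inequality. When $\xi = \log c$ for some integer $c \ge 2$ this is immediate; for general $\xi$ one must work with a periodic sequence mixing $\lfloor e^\xi \rfloor$ and $\lceil e^\xi \rceil$ in a prescribed proportion that produces the desired value of $\gamma$. The resulting Lyapunov exponent is the log of the leading eigenvalue of a product of two $2 \times 2$ matrices, and one extends $\rho < c\phi$ to this perturbed setting by invoking continuity of that eigenvalue in the mixing proportion and the strict inequality already established for the pure case.
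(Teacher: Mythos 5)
Your overall strategy matches the paper's: produce two points of $E_\xi$ with different Lyapunov exponents, so that for each $\lambda$ at least one of them lies in $E_\xi\setminus F_\lambda$. Your sparse witness $x_1$ is the same as the paper's $x_0$, with $\lambda(x_1)=2\xi+\gamma_0$ as in (\ref{112}). Your second witness differs from the paper's (which iterates the block $(1,\dots,1,\lfloor e^{k\xi}\rfloor)$ with occasional very large entries inserted to correct the Khintchine average) and is simpler in spirit, but as written it carries a genuine gap.

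The gap is exactly the step you flag: for non-integer $e^\xi$ you mix $a=\lfloor e^\xi\rfloor$ and $a+1$ and appeal to ``continuity in the mixing proportion.'' When $0<\xi<\log 2$ you are mixing $1$'s and $2$'s, and the pure endpoint $p=1$ (all $1$'s) gives \emph{equality} $\lambda([\overline 1])=\gamma_0=2\cdot 0+\gamma_0$, not a strict inequality. Continuity of the top eigenvalue therefore gives no sign information about $\lambda-(2\gamma+\gamma_0)$ at the interior proportion you need; a function vanishing at one endpoint and negative at the other could a priori vanish or change sign in between. What actually closes the argument is a bound that is strict as soon as one digit is $\ge 2$: writing $M_a$ for the $2\times 2$ matrix with rows $(a,1)$ and $(1,0)$, one has $q_n=(M_{a_n}\cdots M_{a_1})_{11}\le\prod_{j\le n}\|M_{a_j}\|$, and a singular-value computation gives $\|M_1\|=\phi:=\frac{1+\sqrt 5}{2}$ but $\|M_a\|<a\phi$ for every integer $a\ge 2$. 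If the digits of $x_2$ take the values $a$ and $a+1$ with asymptotic frequencies $p$ and $1-p$ chosen so that $\gamma(x_2)=\xi>0$, then $p<1$ or $a\ge 2$, hence $\lambda(x_2)=\lim\frac 2n\log q_n\le 2\bigl(p\log\|M_a\|+(1-p)\log\|M_{a+1}\|\bigr)<2\bigl(p\log a+(1-p)\log(a{+}1)\bigr)+2\log\phi=2\xi+\gamma_0$. This replaces your perturbation step and covers all $\xi>0$ uniformly. Finally, you (and in fact the paper's own proof) implicitly assume $\xi>0$: at $\xi=0$ both of your witnesses degenerate to $[\overline 1]$, and Lemma~\ref{ABC} shows $E_0=F_{\gamma_0}$, so the proposition as stated actually fails at $(\xi,\lambda)=(0,\gamma_0)$ and the hypothesis should read $\xi>0$.
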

$Proof.$ Given $\xi\ge 0$. It suffices to construct two numbers with
same Khintchine exponent $\xi$ but different Lyapunov exponents.

 For the first number, take just the number $x_0$ constructed in the proof of Proposition {\ref {p1}}. We
claim that
\begin{eqnarray}\label{112}
\lambda(x_0)=2\xi +2\log \frac{\sqrt{5}+1}{2}.
\end{eqnarray}
In fact, by Lemma \ref{l2.3} we have
\begin{equation}\label{113}
\prod_{j=1}^k(\frac{a_{n_j}+1}{2})q_{n_k-k}(1,\cdots, 1)\leq
q_{n_k}(a_1,\cdots, a_{n_k})\leq
\prod_{j=1}^k(a_{n_j}+1)q_{n_k-k}(1,\cdots, 1).
\end{equation}
Then by the assumption on $n_k$, we have
\begin{eqnarray*}
\lambda(x_0)=\lim_{n\to \infty}\frac{2}{n}\log q_n(x_0)=2(\xi+\log
\frac{\sqrt{5}+1}{2}).
\end{eqnarray*}

Construct now the second number.
 Fix $k\geq 1$. Define
$x_1=[\varsigma_1, \cdots, \varsigma_n, \cdots]$ where
$$\varsigma_n=\Bigg(\underbrace{\overbrace{1,\cdots, 1, \lfloor e^{k\xi}\rfloor}^k,
\cdots, 1,\cdots, 1, \lfloor e^{k\xi}\rfloor}_{kn},
\left\lfloor(\frac{e^{(k+1)\xi}}{[e^{k\xi}]})^n\right\rfloor\Bigg).$$
Notice that there are $n$ small vectors $(1,\cdots, 1, \lfloor
e^{k\xi}\rfloor)$ in $\varsigma_n$ and the length of $\varsigma_n$
is equal to $N_k:= kn+1$. We can prove
$$\gamma(x_1)=\xi, \ \ \lambda(x_1)=\lambda \left(\left[
\overline{1,\cdots, \lfloor
e^{k\xi}\rfloor}\right]\right)+2\xi-\frac{2}{k}\log \lfloor
e^{k\xi}\rfloor,$$ by  the same arguments as in proving the similar
result for $x_0$. It is clear that $\lambda(x_0) \neq \lambda(x_1)$
for large $k\geq 1$. \hfill{$\Box$}

It is evident that Proposition \ref{p1} and the formula (\ref{112})
yield the following result due to M. Pollicott and H. Weiss
\cite{PoW}.
\begin{cor}[\cite{PoW}]
For any $\lambda \geq 2\log \frac{\sqrt{5}+1}{2}$, there exists a
point $x_0\in [0,1)$ such that $\lambda(x_0)=\lambda$.
\end{cor}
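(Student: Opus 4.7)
The corollary follows almost immediately from the explicit construction carried out in the proof of Proposition~\ref{p1} together with the identity~(\ref{112}). The plan is as follows.

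Given $\lambda \geq 2\log\frac{\sqrt{5}+1}{2} = \gamma_0$, I would set
\[
\xi := \frac{\lambda - \gamma_0}{2} \geq 0.
\]
Applying Proposition~\ref{p1} to this value of $\xi$ produces a point $x_0 \in [0,1)$ whose partial quotients are
\[
a_{n_k} \in [e^{(n_k-n_{k-1})\xi},\, e^{(n_k-n_{k-1})\xi}+1], \qquad a_n = 1 \text{ otherwise},
\]
for a suitably chosen increasing sequence $\{n_k\}$ with $n_{k+1}-n_k \to \infty$ and $n_k/n_{k+1} \to 1$. By construction this point satisfies $\gamma(x_0) = \xi$.

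The second ingredient is formula~(\ref{112}), which was established precisely for this $x_0$ during the proof of Proposition~\ref{l6} via the two-sided estimate~(\ref{113}) on $q_{n_k}(x_0)$ combined with the Lyapunov identity $\lambda(x_0) = \lim_{n\to\infty} \frac{2}{n}\log q_n(x_0)$ (as recorded after Lemma~\ref{lemma2.5}). It asserts
\[
\lambda(x_0) = 2\xi + 2\log\frac{\sqrt{5}+1}{2} = 2\xi + \gamma_0.
\]
Substituting our choice of $\xi$ gives $\lambda(x_0) = \lambda$, as desired. The boundary case $\lambda = \gamma_0$ corresponds to $\xi = 0$ and is handled by the degenerate choice $a_n \equiv 1$ mentioned at the start of the proof of Proposition~\ref{p1}.

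There is no real obstacle here, since both the construction of the point and the computation of its Lyapunov exponent have already been carried out in the preceding two results. The only thing to check is that the map $\xi \mapsto 2\xi + \gamma_0$ sends $[0,\infty)$ onto $[\gamma_0, \infty)$, which is trivial; hence every admissible value of $\lambda$ is attained. Thus the corollary reduces to a one-line change of variable applied to Proposition~\ref{p1} and formula~(\ref{112}).
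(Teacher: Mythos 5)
Your argument is exactly the one the paper has in mind: the authors remark that the corollary is "evident" from Proposition~\ref{p1} and formula~(\ref{112}), and your change of variable $\xi = (\lambda-\gamma_0)/2$ is precisely how those two facts combine. Correct, and the same approach as the paper.
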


\subsection{Pointwise dimension}
We are going to compare the pointwise dimension and the Markov
pointwise dimension (corresponding to continued fraction system) of
a Borel probability measure.

Let $\mu$ be a Borel probability measure on $[0,1)$. Define the
pointwise dimension and the Markov pointwise dimension respectively
by
$$
   d_\mu(x):= \lim_{r\to 0} \frac{\log \mu(B(x,r))}{\log r},
\ \
   \delta_\mu(x):= \lim_{n\to \infty} \frac{\log \mu(I_n(x))}{\log
   |I_n(x)|},
$$
if the limits exist, where $B(x,r)$ is the ball centered at $x$ with
radius $r$.

For two series $\{u_n\}_{n\geq 0}$ and $\{v_n\}_{n\geq 0}$, we write
$u_n\asymp v_n$ which means that there exist absolute positive
constants $c_1, c_2$ such that $c_1v_n\leq u_n\leq c_2 v_n$ for $n$
large enough. Sometimes, we need the following condition at a point
$x$:
\begin{eqnarray}\label{compare}
  \mu(B(x,|I_n(x)|))\asymp \mu(I_n(x)).
\end{eqnarray}
We have the following relationship between $\delta_\mu(x)$ and
$d_\mu(x)$.
\begin{lem}\label{d_delta}
Let $\mu$ be a Borel measure. \\(a) Assume (\ref{compare}).
  If $d_\mu(x)$ exists then $\delta_\mu (x)$exists and $\delta_\mu(x)=d_\mu(x)$.
 \\ (b) If $\delta_\mu (x)$ and $\lambda (x)$ both exist, then $d_\mu (x)$exists
  and $\delta_\mu(x)=d_\mu(x)$.
\end{lem}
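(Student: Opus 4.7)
My plan is to prove the two parts separately, both relying on Lemma \ref{l2.5} (the ``regular property'') that controls how Euclidean balls around $x$ are covered by nearby cylinders.

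\emph{Part (a).} Set $r_n := |I_n(x)|$. Since $q_n \geq 2^{(n-1)/2}$ by Lemma \ref{l2.2}(iii) and $|I_n(x)| = 1/(q_n(q_n+q_{n-1}))$ by Lemma \ref{length}, one has $r_n \to 0$ and $\log r_n \to -\infty$. The hypothesis (\ref{compare}) then gives $\log \mu(B(x, r_n)) = \log \mu(I_n(x)) + O(1)$, so dividing by $\log r_n$ and sending $n \to \infty$ makes the $O(1)/\log r_n$ term vanish. Since $d_\mu(x)$ exists, its value along the subsequence $\{r_n\}$ coincides with the full limit, yielding $\delta_\mu(x) = d_\mu(x)$.

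\emph{Part (b).} For small $r > 0$ let $n = n(r)$ be the unique integer with $|I_{n+1}(x)| \leq r < |I_n(x)|$. I first establish the sandwich
\[ I_{n+1}(x) \subset B(x, r) \subset I_{n-2}(x). \]
The left inclusion is immediate, as $I_{n+1}(x)$ is an interval containing $x$ of length $\leq r$. The right inclusion uses $B(x, r) \subset B(x, |I_n(x)|)$ together with Lemma \ref{l2.5}: in each of its three cases, $B(x, |I_n(x)|)$ lies in a union of neighboring cylinders sharing a common prefix of length at least $n-2$, hence inside $I_{n-2}(x)$. Taking $\mu$-measure, then logs, and dividing by $\log r < 0$ (which flips inequalities), we obtain
\[ \frac{\log \mu(I_{n-2}(x))}{\log r} \leq \frac{\log \mu(B(x, r))}{\log r} \leq \frac{\log \mu(I_{n+1}(x))}{\log r}. \]
To squeeze both outer quantities to $\delta := \delta_\mu(x)$, factor, for $m \in \{n+1, n-2\}$,
\[ \frac{\log \mu(I_m(x))}{\log r} = \frac{\log \mu(I_m(x))}{\log |I_m(x)|} \cdot \frac{\log |I_m(x)|}{\log r}. \]
The first factor tends to $\delta$ by the existence of $\delta_\mu(x)$. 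For the second, Lemma \ref{lemma2.5} combined with the existence of $\lambda(x)$ yields $\frac{1}{n}\log |I_n(x)| \to -\lambda(x)$; since $\lambda(x) \geq \gamma_0 > 0$, one gets $\log |I_m(x)|/\log |I_n(x)| \to 1$ whenever $m = n \pm O(1)$, and the same trap shows $\log r / \log |I_n(x)| \to 1$ since $\log r$ lies between $\log |I_{n+1}(x)|$ and $\log |I_n(x)|$. Hence the second factor tends to $1$, and the squeeze delivers $d_\mu(x) = \delta$.

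\emph{Main obstacle.} The delicate step is the right-hand inclusion in the sandwich of part (b): balls need not align with cylinders, so $B(x, |I_n(x)|)$ can spill into a bounded number of neighboring cylinders, and this overlap is exactly what the regular property of Lemma \ref{l2.5} controls. The Lyapunov hypothesis is indispensable for (b): only the existence and strict positivity of $\lambda(x)$ let us treat a shift in level by $O(1)$ as negligible on the log scale; without it, the quotient $\log |I_{n+1}(x)|/\log |I_n(x)|$ might fail to converge to $1$, and the squeeze would collapse.
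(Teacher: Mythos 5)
Your proposal is correct and mirrors the paper's own argument: part (a) is the same subsequence reduction under (\ref{compare}), and part (b) uses exactly the same sandwich $I_{n+1}(x)\subset B(x,r)\subset I_{n-2}(x)$ from Lemma \ref{l2.5} together with the observation that $\frac1n\log|I_n(x)|\to-\lambda(x)$ (via Lemma \ref{lemma2.5}) makes a level shift of $O(1)$ negligible on the log scale. The only cosmetic difference is that you divide throughout by $\log r$ and factor, whereas the paper divides the two sides of the sandwich by $\log|I_{n+1}(x)|$ and $\log|I_n(x)|$ respectively; the squeeze is identical.
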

\begin{proof} (a) \ If the limit defining $d_{\mu}(x)$ exists, then
the limit
\begin{eqnarray*}
\lim_{n\to +\infty}\frac{\log \mu(B(x, |I_n(x)|))}{\log |I_n(x)|}
\end{eqnarray*}
exists and equals to $d_{\mu}(x)$. Thus by (\ref{compare}), the
limit defining $\delta_{\mu}(x)$ also exists and equals to
$d_{\mu}(x)$.

 (b) \  Since $\lambda(x)$ exists, by Lemma {\ref{lemma2.5}} we have
\begin{eqnarray}\label{compare2}
\lim_{n\to \infty}\frac{\log |I_{n}(x)|}{\log
|I_{n+1}(x)|}=\lim_{n\to \infty}\frac{1}{n}\log
|I_n(x)|/\frac{1}{n+1}\log |I_{n+1}(x)|=1.
\end{eqnarray}
For any $r>0$, there exists an $n$ such that $|I_{n+1}(x)|\leq
r<|I_n(x)|$. Then by Lemma \ref{l2.5}, we have $I_{n+1}(x)\subset
B(x,r)\subset I_{n-2}(x)$. Thus
\begin{eqnarray}\label{compare3}
\frac{\log \mu(I_{n-2}(x))}{\log |I_{n+1}(x)|}\leq \frac{\log
\mu(B(x,r))}{\log r}\leq \frac{\log \mu(I_{n+1}(x))}{\log |I_n(x)|}.
\end{eqnarray}
Combining  (\ref{compare2}) and (\ref{compare3}) we get the desired
result.
\end{proof}

Let us give some measures for which the condition (\ref{compare}) is
satisfied. These measures will be used in the subsection \ref{5.1}.
The existence of these measures $\mu_{t,q}$ will be discussed in
Proposition \ref{measure_potential} and the subsection \ref{5.1}.
\begin{lem}\label{example-measure}
Suppose $\mu_{t,q}$ is a measure satisfying
$$\mu_{t,q}(I_n(x))\asymp \exp(-nP(t,q))|I_n(x)|^{t}\prod_{j=1}^{n}
a_j^q,$$ where $P(t,q)$ is a constant. Then (\ref{compare}) is
satisfied by $\mu_{t,q}$.
\end{lem}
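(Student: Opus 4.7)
The plan is to combine the regular property from Lemma \ref{l2.5} with the explicit Gibbs-like form of $\mu_{t,q}$: since every neighbouring cylinder that arises in Lemma \ref{l2.5} has the same digit string as $I_n(x)$ up to bounded perturbations of its last two positions, its $\mu_{t,q}$-mass will be comparable to $\mu_{t,q}(I_n(x))$ with a constant depending only on $t$ and $q$.

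The inclusion $I_n(x)\subset B(x,|I_n(x)|)$ immediately gives one direction, $\mu_{t,q}(B(x,|I_n(x)|))\geq \mu_{t,q}(I_n(x))$. For the opposite bound I would apply Lemma \ref{l2.5}, splitting into its three subcases. In each of them $B(x,|I_n(x)|)$ is covered by at most five cylinders of order $n$, $n-1$ or $n-2$ whose digit strings differ from that of $I_n(x)$ only in the last coordinate and only by a shift of at most $4$. It therefore suffices to show, in each case, that $\mu_{t,q}(I)\leq C\,\mu_{t,q}(I_n(x))$ for every such neighbour $I$, with $C=C(t,q)$.

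In case (i), $a_n\geq 2$ and the hypothesis gives
\[
\frac{\mu_{t,q}(I_n(a_1,\ldots,a_{n-1},a_n+j))}{\mu_{t,q}(I_n(x))}\asymp\Bigl(\frac{|I_n(a_1,\ldots,a_n+j)|}{|I_n(x)|}\Bigr)^{t}\Bigl(\frac{a_n+j}{a_n}\Bigr)^{q}
\]
for $j\in\{-1,0,1,2,3\}$. The digit factor lies in $[(1/2)^{|q|},(5/2)^{|q|}]$, and $|I_n|\asymp q_n^{-2}$ together with Lemma \ref{l2.3} shows that replacing $a_n$ by $a_n+j$ changes $q_n$ by a bounded multiplicative factor, so the length factor is bounded as well. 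Case (ii), in which $a_n=1$ and the five neighbours are $(n-1)$-cylinders $I_{n-1}(a_1,\ldots,a_{n-1}+j)$, is handled in exactly the same way; the only differences are a harmless prefactor $e^{P(t,q)}$ and the length bound $q_n=q_{n-1}+q_{n-2}\leq 2q_{n-1}$ used to control $|I_{n-1}|/|I_n|$.

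The delicate case is (iii), where $a_n=a_{n-1}=1$ and $B(x,|I_n(x)|)\subset I_{n-2}(a_1,\ldots,a_{n-2})$. Here the hypothesis reduces to
\[
\frac{\mu_{t,q}(I_{n-2}(a_1,\ldots,a_{n-2}))}{\mu_{t,q}(I_n(x))}\asymp e^{2P(t,q)}\Bigl(\frac{|I_{n-2}|}{|I_n|}\Bigr)^{t}\frac{1}{a_{n-1}^{q}a_n^{q}},
\]
and both missing digit factors are $1^q=1$. The recursion $q_n=a_nq_{n-1}+q_{n-2}=(a_{n-1}+1)q_{n-2}+q_{n-3}\leq 3q_{n-2}$, together with $q_{n-1}\leq 2q_{n-2}$, yields $|I_{n-2}|/|I_n|=q_n(q_n+q_{n-1})/\bigl(q_{n-2}(q_{n-2}+q_{n-3})\bigr)$ bounded by an absolute constant, giving the required uniform bound. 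This last step is the main obstacle: without the accidental cancellation $a_{n-1}=a_n=1$, the missing digit factors in the product $\prod a_j^q$ and the missing length factors would not generally stay bounded, which is precisely why Lemma \ref{l2.5} isolates this case.
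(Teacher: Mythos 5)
Your proof is correct and follows the same route as the paper's (one-line) argument: in each of the three cases of Lemma \ref{l2.5}, the Gibbs-like hypothesis on $\mu_{t,q}$ together with the boundedness of both the digit ratio and the length ratio makes the mass of every covering cylinder comparable to $\mu_{t,q}(I_n(x))$, and the key observation, exactly as the paper notes, is that $a_n=1$ forces $\mu_{t,q}(I_n(x))\asymp\mu_{t,q}(I_{n-1}(x))$. One small slip worth fixing: Lemma \ref{l2.3} is about \emph{inserting} a digit $b$ at an interior position, not about replacing $a_n$ by $a_n+j$; what you actually need is the direct recursion $q_n(a_1,\ldots,a_n+j)=q_n(a_1,\ldots,a_n)+jq_{n-1}$ combined with $q_{n-1}\leq q_n/2$ when $a_n\geq 2$, which immediately gives the bounded multiplicative change in $q_n$.
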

$Proof.$ Notice that when $a_n(x)=1$,
$\mu_{t,q}(I_n(x))\asymp\mu_{t,q}(I_{n-1}(x))$.  Then in the light
of Lemma \ref{l2.5}, we can show that (\ref{compare}) is satisfied
by $\mu_{t,q}$. \hfill $\Box$


\setcounter{equation}{0}

\section{Fast growth rate: proof of Theorem \ref{thm-fast}}

\subsection{Lower bound}
We start with the mass distribution principle (see \cite{Fa1},
Proposition 4.2), which will be used to estimate the lower bound of
the Hausdorff dimension of a set.

\begin{lem}[\cite{Fa1}]
Let $E \subset [0,1)$ be a Borel set and $\mu$ be a measure with
$\mu(E)>0$. Suppose that
$$\liminf_{r\rightarrow 0} \frac{\log \mu(B(x,r))}{\log r} \geq s,
\quad \forall x\in E $$ where $B(x,r)$ denotes the open ball with
center at $x$ and radius $r$. Then $\dim E \geq s$.
\end{lem}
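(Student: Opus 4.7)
The plan is to run the standard Frostman/mass-distribution argument: convert the pointwise lower bound on the local mass dimension into a uniform covering estimate, after first restricting to a subset on which the implicit threshold radius is bounded below.

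First I would fix $\epsilon>0$ and unpack the hypothesis. Since $\log r<0$ for small $r$, the assumption $\liminf_{r\to 0}\log \mu(B(x,r))/\log r\geq s$ translates into: for every $x\in E$ there is some $r_0(x)>0$ with
\[
\mu(B(x,r))\leq r^{s-\epsilon} \qquad (0<r<r_0(x)).
\]
Because $r_0(x)$ may vary badly with $x$, I would stratify $E=\bigcup_{m\geq 1}E_m$, where $E_m:=\{x\in E: r_0(x)\geq 1/m\}$. The sequence is increasing, so continuity from below of $\mu$ together with $\mu(E)>0$ forces $\mu(E_{m_0})>0$ for some fixed $m_0$.

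Next I would bound $\mathcal{H}^{s-\epsilon}(E_{m_0})$ from below. Let $\{U_i\}$ be any cover of $E_{m_0}$ with $\sup_i|U_i|<1/(2m_0)$. For each $U_i$ meeting $E_{m_0}$, pick $x_i\in U_i\cap E_{m_0}$; then $U_i\subset B(x_i,|U_i|)$ and $|U_i|<1/(2m_0)\leq r_0(x_i)$, so by the effective bound $\mu(U_i)\leq |U_i|^{s-\epsilon}$. Summing,
\[
\sum_i |U_i|^{s-\epsilon}\ \geq\ \sum_i \mu(U_i)\ \geq\ \mu(E_{m_0})\ >\ 0.
\]
Since this lower bound is independent of the cover, $\mathcal{H}^{s-\epsilon}(E_{m_0})\geq \mu(E_{m_0})>0$, whence $\dim E\geq \dim E_{m_0}\geq s-\epsilon$. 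Letting $\epsilon\to 0$ gives $\dim E\geq s$.

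There is no genuine obstacle here; the only point requiring some care is the stratification step, which is forced on us because one cannot in general choose a single threshold $r_0$ valid uniformly over all of $E$ without first decomposing $E$ into pieces on which $r_0$ is bounded below.
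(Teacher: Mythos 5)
The paper does not prove this lemma; it is quoted directly from Falconer's book (\cite{Fa1}, Prop.\ 4.2) as the mass distribution principle. Your proof is the standard and correct argument for the pointwise (``Billingsley-type'') version of that principle: unpack the $\liminf$ hypothesis into an effective bound $\mu(B(x,r))\le r^{s-\epsilon}$ for $r<r_0(x)$, stratify $E$ into the sets $E_m$ on which $r_0\ge 1/m$, use continuity from below (or, to sidestep any measurability worry about $E_m$, Borel regularity of $\mu$ together with monotone convergence of outer measure) to find $m_0$ with $\mu(E_{m_0})>0$, and then convert any sufficiently fine cover of $E_{m_0}$ into a Hausdorff-sum lower bound by summing $\mu(U_i)$. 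One cosmetic point: from $x_i\in U_i$ one only gets $U_i\subset\overline{B(x_i,|U_i|)}$, not the open ball; either pass to $B(x_i,2|U_i|)$ (which your choice of threshold $1/(2m_0)$ already accommodates, at the cost of a harmless factor $2^{s-\epsilon}$), or bound $\mu(U_i)\le\mu(B(x_i,r))\le r^{s-\epsilon}$ for $|U_i|<r<1/m_0$ and let $r\downarrow|U_i|$. Either way the conclusion $\mathcal H^{s-\epsilon}(E)\ge\mathcal H^{s-\epsilon}(E_{m_0})>0$ stands, and hence $\dim E\ge s$.
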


Next we  give  a formula for computing the Hausdorff dimension for a
class of Cantor sets related to continued fractions.

\begin{lem}\label{l3.1}
Let $\{s_n\}_{n \geq 1}$ be a sequence of positive integers tending
to infinity with $s_n\geq 3$ for all $n\geq 1$. Then for any
positive number $N \geq 2$, we have
\begin{eqnarray*}
\dim \{x\in [0,1): s_n \leq a_n(x) <N s_n\ \ \forall \ n \geq 1\} =
\liminf\limits_{n \to \infty}\frac{\log (s_1s_2\cdots
s_n)}{2\log(s_1s_2\cdots s_n)+\log s_{n+1}}.
\end{eqnarray*}
\end{lem}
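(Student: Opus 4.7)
\medskip

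\noindent\textbf{Proof plan for Lemma~\ref{l3.1}.}

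The plan is to exploit the Cantor-like structure of
$$E := \{x\in [0,1):\, s_n\le a_n(x)<Ns_n\;\forall n\ge 1\}.$$
Write $t_n := s_1s_2\cdots s_n$. For any admissible string $(a_1,\dots,a_n)$ (that is, $s_k\le a_k<Ns_k$), Lemma~\ref{l2.2}(iii) and Lemma~\ref{length} give $|I_n(a_1,\dots,a_n)|\asymp t_n^{-2}$, and inside such a cylinder the consecutive sub-cylinders $I_{n+1}(a_1,\dots,a_n,j)$ for $j\in[s_{n+1},Ns_{n+1})$ tile a sub-interval $J_{n+1}(a_1,\dots,a_n)$ of $I_n$ whose length satisfies
$$|J_{n+1}(a_1,\dots,a_n)|\;=\;\sum_{j=s_{n+1}}^{Ns_{n+1}-1}|I_{n+1}(a_1,\dots,a_n,j)|\;\asymp\;\frac{1}{s_{n+1}t_n^{2}},$$
by summing the tail $\sum_j 1/(j q_n)^2$. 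Denote the target dimension by $s^\ast:=\liminf_n a_n/(2a_n+b_n)$, where $a_n=\log t_n$ and $b_n=\log s_{n+1}$.

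\medskip
\noindent\textit{Upper bound.} Cover $E$ by all the clusters $J_{n+1}(a_1,\dots,a_n)$ with admissible $(a_1,\dots,a_n)$. Their number is $\asymp\prod_{k=1}^n (N-1)s_k$, so
$$\sum |J_{n+1}|^{s}\;\asymp\; \Bigl(\prod_{k\le n}(N-1)s_k\Bigr)(s_{n+1}t_n^{2})^{-s} \;\asymp\; (N-1)^{n}\,t_n^{1-2s}\,s_{n+1}^{-s}.$$
Taking logarithms, the exponent is $n\log(N-1)+a_n-s(2a_n+b_n)$, which, since $s_n\to\infty$ implies $n=o(a_n)$, behaves like $a_n - s(2a_n+b_n)$. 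For any $s>s^\ast$ there is a subsequence $n_k$ along which $a_{n_k}/(2a_{n_k}+b_{n_k})<s-\epsilon$, so this exponent tends to $-\infty$. Since $|J_{n+1}|\to 0$, we conclude $\mathcal H^{s}(E)=0$ and hence $\dim E\le s^\ast$.

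\medskip
\noindent\textit{Lower bound.} Define the uniform measure $\mu$ on the cylinder tree by
$$\mu\bigl(I_n(a_1,\dots,a_n)\bigr)\;=\;\prod_{k=1}^{n}\frac{1}{\lfloor (N-1)s_k\rfloor}\;\asymp\;\frac{(N-1)^{-n}}{t_n}.$$
For the mass distribution principle I must control $\mu(B(x,r))$ for every $r<|I_1(x)|$. Choose $n$ with $|I_{n+1}(x)|\le r<|I_n(x)|$ and split into two ranges according to the cluster scale $\ell_{n+1}\asymp s_{n+1}^{-1}t_n^{-2}$. When $\ell_{n+1}\le r<|I_n(x)|$, the regular property (Lemma~\ref{l2.5}) gives $\mu(B(x,r))\lesssim \mu(I_n(x))\asymp(N-1)^{-n}/t_n$, so $\log\mu/\log r$ is smallest at $r\asymp\ell_{n+1}$, where it evaluates (using $n=o(a_n)$) to
$$\frac{a_n}{2a_n+b_n}.$$
When $|I_{n+1}(x)|\le r\le\ell_{n+1}$, the ball meets $\asymp r/|I_{n+1}|$ adjacent sub-cylinders, so $\mu(B(x,r))\asymp (r/|I_{n+1}|)\mu(I_{n+1})\asymp r\,t_{n+1}/(N-1)^{n+1}$; a calculus check on this linear-in-$r$ expression shows $\log\mu/\log r$ is again minimised at the common endpoint $r=\ell_{n+1}$, with the same value $a_n/(2a_n+b_n)$. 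Consequently $\liminf_{r\to 0}\log\mu(B(x,r))/\log r\ge s^\ast$ for every $x\in E$, and the mass distribution principle yields $\dim E\ge s^\ast$.

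\medskip
\noindent\textit{Main obstacle.} The heart of the argument is pinpointing the critical scale $r\asymp \ell_{n+1}=(s_{n+1}t_n^{2})^{-1}$: it is neither the cylinder scale $|I_n|$ nor $|I_{n+1}|$ (both of which individually yield only the bound $1/2$), but rather the length of the cluster formed by the admissible children of $I_n$. Once one realises that the natural cover of $E$ is by these clusters $J_{n+1}$, the upper bound falls out, and the matching lower bound requires only checking that the uniform tree measure attains its worst pointwise dimension precisely at the same scale. The arithmetic verification that both optimisations land on the formula $a_n/(2a_n+b_n)$ is the only subtle computation; the rest is bookkeeping that $n\log(N-1)$ is negligible compared with $a_n$, which follows from $s_n\to\infty$.
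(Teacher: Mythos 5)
Your proposal is correct and follows essentially the same approach as the paper: the paper's ``basic CF-intervals'' $J(a_1,\dots,a_n)$ are exactly your clusters, the uniform tree measure $\mu(J(a_1,\dots,a_n))=\prod_{j\le n}\frac{1}{(N-1)s_j}$ is the same, and both arguments pivot on identifying the critical scale $(s_{n+1}t_n^{2})^{-1}$ with the same two-range case split for $\mu(B(x,r))$. The only divergence is that where you invoke Lemma~\ref{l2.5} to get $\mu(B(x,r))\lesssim\mu(I_n(x))$, the paper instead carries out an explicit gap analysis between adjacent basic intervals (Cases (1)--(3), which is where the hypothesis $s_n\ge 3$ is used) to show the ball meets exactly one such interval; your factor-of-five loss from the regular property is harmless and the shortcut is legitimate.
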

\begin{proof} Let $F$ be the set in question and $s_0$ be the
$\liminf$ in the statement.
 We call $$J(a_1, a_2, \cdots, a_n):=Cl\bigcup_{a_{n+1}\geq s_{n+1}}I_{n+1}(a_1,\cdots, a_n,a_{n+1})$$  a {\it basic}
CF-{\it interval} of order $n$ with respect to $F$ (or simply basic
interval of order $n$), where $s_k \leq a_k <N s_k$ for all $1 \leq
k \leq n$. Here $Cl$ stands for the closure. Then it follows that
\begin{eqnarray}\label{f10} F=\bigcap_{n=1}^{\infty}\bigcup_{s_k\leq
a_k<Ns_k, 1\leq k\leq n}J(a_1,\cdots,a_n).
\end{eqnarray}
By Lemma \ref{length}, we have \begin{eqnarray}\label{f11}
J(a_1,\cdots,a_n)=\left[\frac{p_n}{q_n},
\frac{s_{n+1}p_n+p_{n-1}}{s_{n+1}q_n+q_{n-1}}\right]\ {\rm{or}}\ \
\left[
\frac{s_{n+1}p_n+p_{n-1}}{s_{n+1}q_n+q_{n-1}},\frac{p_n}{q_n}\right]
\end{eqnarray}according to $n$ is even or odd. Then by Lemma \ref{length}, Lemma \ref{l2.2}
and the assumption on $a_k$ that $s_k\leq a_k<Ns_k$ for all $1\leq
k\leq n$, we have
\begin{equation}\label{f3.12}
\frac{1}{2N^n}\frac{1}{s_{n+1}(s_1\cdots
s_n)^2}\leq\Big|J(a_1,\cdots,a_n)\Big|=\frac{1}{q_n(s_{n+1}q_n+q_{n-1})}\leq
\frac{1}{s_{n+1}(s_1\cdots s_n)^2}.
\end{equation}
Since $s_k\to \infty$ as $k\to \infty$, then\begin{eqnarray*}
\lim_{n\to \infty}\frac{\log s_1+\cdots+\log s_n}{n}=\infty.
\end{eqnarray*}
This, together with the definition of $s_0$, implies that for any
$s>s_0$, there exists a sequence $\{n_{\ell}: \ell\geq 1\}$ such
that for all $\ell\geq 1$,
$$ (N-1)^{n_{\ell}}<\left(s_{n_{\ell}+1}(s_1\cdots s_{n_{\ell}})^2\right)^{\frac{s-s_0}{2}},
\qquad \prod_{k=1}^{n_{\ell}}s_k\leq
\left(s_{{n_{\ell}}+1}(s_1\cdots
s_{n_{\ell}})^2\right)^{\frac{s+s_0}{2}}.
$$
Then, by (\ref{f10}), together with (\ref{f3.12}), we have
\begin{eqnarray*} H^s(F)&\leq &\liminf_{\ell\to \infty}\sum_{s_k\leq
a_k<Ns_k, 1\leq k\leq
n_{\ell}}\Big|J(a_1,\cdots,a_{n_{\ell}})\Big|^s\\
&\leq & \liminf_{\ell\to
\infty}\left((N-1)^{n_{\ell}}\prod_{k=1}^{n_{\ell}}s_k\right)\left(\frac{1}{s_{{n_{\ell}}+1}(s_1\cdots
s_{n_{\ell}})^2}\right)^s\leq 1.
\end{eqnarray*}
Since $s>s_0$ is arbitrary, we have $\dim F\leq s_0$.

 For the lower bound, we define a measure $\mu$ such that
for any basic $CF$-interval $J(a_1, a_2, \cdots, a_n)$ of order $n$,
$$
\mu(J(a_1, a_2, \cdots, a_n))=\prod_{j=1}^n \frac{1}{(N-1)s_j}.
$$
By the Kolmogorov extension theorem, $\mu$ can be extended to a
probability measure supported on $F$. In the following, we will
check  the mass distribution principle with this measure.

Fix $s<s_0$. By the definition of $s_0$ and the fact that $s_k\to
\infty \ (k\to \infty)$ and that $N$ is a constant, there exists an
integer $n_0$ such that for all $n\geq n_0$,
\begin{eqnarray}\label{f3.13} \prod_{k=1}^{n}(N-1)s_k\geq
\left(s_{n+1}(\prod_{k=1}^nNs_k)^2\right)^{s}.\end{eqnarray} We take
$\displaystyle r_0=\frac{1}{2N^{n_0}}\frac{1}{s_{n_0+1}(s_1\cdots
s_{n_0})^2}$. \vskip 3pt

For any $x\in F$, there exists an infinite sequence $\{a_1, a_2,
\cdots\}$ with $s_k\leq a_k<Ns_k, \forall k\geq 1$ such that $x\in
J(a_1,\cdots, a_n)$, for all $n\geq 1$. For any $r<r_0$, there
exists an integer $n\geq n_0$ such that $$ |J(a_1,\cdots,
a_{n+1})|\leq r<|J(a_1,\cdots, a_n)|.
$$

We claim that the ball $B(x,r)$ can intersect only one $n$-th basic
interval, which is just $J(a_1,\cdots, a_n)$. We establish this only
at the case that $n$ is even, since for the case that $n$ is odd,
the argument is similar.

Case (1): $s_n<a_n<Ns_n-1$. The left and right adjacent $n$-th order
basic intervals to $J(a_1,\cdots, a_n)$ are $J(a_1,\cdots, a_n-1)$
and $J(a_1,\cdots, a_n+1)$ respectively. Then by (\ref{f11}) and the
condition that $s_n\geq 3$,  the gap between $J(a_1,\cdots, a_n)$
and $J(a_1,\cdots, a_n-1)$ is \begin{eqnarray*} \ \ \
\frac{p_n}{q_n}-\frac{s_{n+1}(p_n-p_{n-1})+p_{n-1}}{s_{n+1}(q_n-q_{n-1})+q_{n-1}}
=\frac{s_{n+1}-1}{q_n\Big(s_{n+1}(q_n-q_{n-1})+q_{n-1}\Big)}\geq
\Big|J(a_1,\cdots, a_n)\Big|.
\end{eqnarray*}
Hence $B(x,r)$ can not intersect $J(a_1,\cdots, a_n-1)$. On the
other hand,
 the  gap  $J(a_1,\cdots, a_n)$
and $J(a_1,\cdots, a_n +1)$ is
\begin{eqnarray*}
\ \ \ \
\frac{p_n+p_{n-1}}{q_n+q_{n-1}}-\frac{s_{n+1}p_n+p_{n-1}}{s_{n+1}q_n+q_{n-1}}
=\frac{s_{n+1}-1}{(q_n+q_{n-1})(s_{n+1}q_n+q_{n-1})}\geq
\Big|J(a_1,\cdots, a_n)\Big|.
\end{eqnarray*}
Hence $B(x,r)$ can not intersect $J(a_1,\cdots, a_n+1)$ either.

Case (2): $a_n=s_n$. The right adjacent $n$-th order basic interval
to $J(a_1,\cdots, a_n)$ is $J(a_1,\cdots, a_n+1)$. The same argument
as in  the case (1) shows that  $B(x,r)$ can not intersect
$J(a_1,\cdots, a_n+1)$. On the other hand, the gap between the left
endpoint of $J(a_1,\cdots, a_n)$ and that of $I_{n-1}(a_1,\cdots,
a_{n-1})$ is
\begin{eqnarray*}
\frac{p_n}{q_n}-\frac{p_{n-1}+p_{n-2}}{q_{n-1}+q_{n-2}}=\frac{s_n-1}{(q_{n-1}+q_{n-2})q_n}\geq
\Big|J(a_1,\cdots, a_n)\Big|.
\end{eqnarray*} It follows that $B(x,r)$ can not intersect any
$n$-th order $CF$-basic intervals on the left of $J(a_1,\cdots,
a_n)$. In general, $B(x,r)$ can intersect no other $n$-th order
$CF$-basic intervals than $J(a_1,\cdots, a_n)$.

Case (3): $a_n=Ns_n-1$. From the case (1), we know that $B(x,r)$ can
not intersect any $n$-th order $CF$-basic intervals on the left of
$J(a_1,\cdots, a_n)$. While for on the right,  the gap between the
right endpoint of $J(a_1,\cdots, a_n)$ and that of
$I_{n-1}(a_1,\cdots, a_{n-1})$ is
\begin{eqnarray*}
\frac{p_{n-1}}{q_{n-1}}-\frac{s_{n+1}p_n+p_{n-1}}{s_{n+1}q_n+q_{n-1}}=\frac{s_{n+1}}{(s_{n+1}q_n+q_{n-1})q_{n-1
}}\geq \Big|J(a_1,\cdots, a_n)\Big|.
\end{eqnarray*}
It follows that $B(x,r)$ can not intersect any $n$-th order
$CF$-basic intervals on the right of $J(a_1,\cdots, a_n)$. In
general, $B(x,r)$ can intersect no other $n$-th order $CF$-basic
intervals than $J(a_1,\cdots, a_n)$.

Now we distinguish two cases to estimate the measure of $B(x,r)$.

$Case \ (i).$ \ $|J(a_1,\cdots, a_{n+1})|\leq r<|I_{n+1}(a_1,\cdots,
a_{n+1})|$. By Lemma \ref{l2.5} and the fact $a_{n+1}\neq 1$,
$B(x,r)$ can intersect at most five $(n+1)$-th order basic
intervals. As a consequence, by (\ref{f3.13}), we have
\begin{eqnarray} \mu(B(x,r))\leq
5\prod_{k=1}^{n+1}\frac{1}{(N-1)s_k}\leq
5\left(\frac{1}{s_{n+2}(N^{n+1}s_1\cdots s_{n+1})^2}\right)^{s}.
\end{eqnarray}
Since $$ r>\Big|J(a_1,\cdots,
a_{n+1})\Big|=\frac{1}{q_{n+1}(s_{n+2}q_{n+1}+q_n)}\geq
\frac{1}{2s_{n+2}(N^{n+1}s_1\cdots s_{n+1})^2},
$$
it follows that\begin{eqnarray*} \mu(B(x,r))\leq 10r^s.
\end{eqnarray*}

 $Case\ (ii).$ \ $|I_{n+1}(a_1,\cdots, a_{n+1})|\leq
r<|J(a_1,\cdots, a_{n})|$. In  this case, we have $$
I_{n+1}(a_1,\cdots, a_{n+1})=\frac{1}{q_{n+1}(q_{n+1}+q_n)}\geq
\frac{1}{2q_{n+1}^2}\geq
\frac{1}{2N^{2(n+1)}}\left(\prod_{k=1}^{n+1}s_{k}\right)^2.
$$
So $B(x,r)$ can intersect at most a number $8rN^{2(n+1)}(s_1\cdots
s_{n+1})^2$ of  $(n+1)$-th basic intervals. As a consequence,
\begin{eqnarray*}
\mu(B(x,r))&\leq& \min\Big\{\mu(J(a_1,\cdots,a_n)), \
8rN^{2(n+1)}(s_1\cdots
s_{n+1})^2\prod_{k=1}^{n+1}\frac{1}{(N-1)s_k}\Big\}\\
&\leq &\prod_{k=1}^{n}\frac{1}{(N-1)s_k}\min\Big\{1, \
8rN^{2(n+1)}(s_1\cdots s_{n+1})^2\frac{1}{(N-1)s_{n+1}}\Big\}.
\end{eqnarray*}
By (\ref{f3.13}) and the elementary inequality  $\min\{a, \ b\}\leq
a^{1-s}b^{s}$ which holds  for any $a, b>0$
 and $0<s<1$, we have\begin{eqnarray*}
\mu(B(x,r))&\leq& \left(\frac{1}{s_{n+1}(N^ns_1\cdots
s_n)^2}\right)^{s}\cdot \left(8rN^{2(n+1)}(s_1\cdots
s_{n+1})^2\frac{1}{(N-1)s_{n+1}}\right)^{s}\\
&\leq& 16N r^s.
 \end{eqnarray*}

 Combining these two cases, together with mass distribution
 principle, we have $\dim F\geq s_0$.\end{proof}

\smallskip
 Let $$E'=\{x\in [0,1):
e^{\varphi(n)-\varphi(n-1)}\leq a_n(x)\leq
2e^{\varphi(n)-\varphi(n-1)}, \ \forall n\geq 1\}.$$ It is evident
that $E'\subset E_\xi(\varphi)$. Then applying Lemma \ref{l3.1}, we
have\begin{eqnarray*} E_\xi(\varphi)\geq \liminf_{n\to
\infty}\frac{\varphi(n)}{\varphi(n+1)+\varphi(n)}=\frac{1}{b+1}.
\end{eqnarray*}

\subsection{Upper bound}
We first give a  lemma which is a little bit more than the upper
bound for the case $b=1$. Its proof uses a family of Bernoulli
measures with an infinite number of states.

\begin{lem}\label{l3.2}
If $\lim\limits_{n\to \infty}\frac{\varphi(n)}{n}=\infty$, then
$\dim E_{\xi}(\varphi)\leq \frac{1}{2}$.
\end{lem}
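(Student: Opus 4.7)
The plan is to exhibit, for each $s > 1/2$, a family of cylinder covers of $E_\xi(\varphi)$ whose $s$-Hausdorff sums tend to zero. Fix $\epsilon \in (0, \xi)$ (so assume $\xi > 0$; the case $\xi = 0$ is degenerate since Lebesgue-almost every $x$ satisfies $\gamma^\varphi(x) = 0$ once $\varphi(n)/n \to \infty$) and set
\[
A_n := \Bigl\{ (a_1, \ldots, a_n) \in \mathbb{N}^n : (\xi - \epsilon)\varphi(n) \leq \sum_{j=1}^n \log a_j \leq (\xi + \epsilon)\varphi(n) \Bigr\}.
\]
For every $x \in E_\xi(\varphi)$ one has $(a_1(x), \ldots, a_n(x)) \in A_n$ for all $n$ sufficiently large, hence
\[
E_\xi(\varphi) \subset \bigcup_{N \geq 1} \bigcap_{n \geq N} \bigcup_{(a_1, \ldots, a_n) \in A_n} I_n(a_1, \ldots, a_n).
\]
By countable stability of Hausdorff dimension, it suffices to bound $\dim E^{(N)}$, where $E^{(N)} := E_\xi(\varphi) \cap \bigcap_{n \geq N} \bigcup_{A_n} I_n$. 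For each $n \geq N$ the cylinders indexed by $A_n$ cover $E^{(N)}$, and Lemma~\ref{l2.2}~(iii) yields $|I_n| \leq q_n^{-2} \leq 2^{-(n-1)}$, so these covers become arbitrarily fine.

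To estimate $\sum_{A_n} |I_n|^s$, pick $s' \in (1/2, s)$ and, using $q_n \geq a_1 a_2 \cdots a_n$ (Lemma~\ref{l2.2}~(iii)), write
\[
|I_n|^s \leq (a_1 a_2 \cdots a_n)^{-2s'} \cdot (a_1 a_2 \cdots a_n)^{-2(s - s')}.
\]
On $A_n$ the second factor is at most $e^{-2(s - s')(\xi - \epsilon) \varphi(n)}$, while the first factor summed over all of $\mathbb{N}^n$ equals $\zeta(2s')^n$. This last identity is precisely the ``Bernoulli measure with infinitely many states'' alluded to in the statement: it is the product of the probability law $p_k = k^{-2s'}/\zeta(2s')$ on $\mathbb{N}$. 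Combining these two bounds,
\[
\sum_{(a_1, \ldots, a_n) \in A_n} |I_n|^s \leq \zeta(2s')^n \, e^{-2(s - s')(\xi - \epsilon) \varphi(n)} = \exp\Bigl( \varphi(n) \Bigl[ \frac{n \log \zeta(2s')}{\varphi(n)} - 2(s - s')(\xi - \epsilon) \Bigr] \Bigr),
\]
which tends to $0$ because $n/\varphi(n) \to 0$ and $\varphi(n) \to \infty$. Thus $H^s(E^{(N)}) = 0$ for every $s > 1/2$ and every $N$, and therefore $\dim E_\xi(\varphi) \leq 1/2$.

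The main delicacy is the split $s = s' + (s - s')$ with $1/2 < s' < s$: the lower bound $s' > 1/2$ is exactly what makes the Bernoulli series $\zeta(2s')$ converge, while the positive gap $s - s' > 0$ is what converts the superlinear hypothesis $\varphi(n)/n \to \infty$ into an exponentially small factor. The lower inequality in the definition of $A_n$ (forcing $\sum_{j=1}^n \log a_j$ to be at least a constant multiple of $\varphi(n)$) is essential here; this is why the estimate is honest only for $\xi > 0$, and why the infinite-alphabet entropy term $n \log \zeta(2s')$ is outweighed by the gain coming from the constraint.
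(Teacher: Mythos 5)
Your proof is correct for $\xi>0$ and is essentially the paper's argument: the split $s=s'+(s-s')$ with $1/2<s'<s$ is precisely the paper's Bernoulli measure $\mu_t$ with $t=2s'>1$ (there, $|I_n|^s\leq\mu_t(I_n)$ on the covering cylinders and $\sum\mu_t(J)\leq 1$ over the disjoint maximal cylinders; you instead sum the cost directly over a fixed-level cover and normalize by $\zeta(2s')^n$, but the mechanism that defeats the combinatorial factor is the same use of $n/\varphi(n)\to 0$). Your parenthetical on $\xi=0$ should be read as a caveat rather than a dismissal: if $\varphi(n)/n\to\infty$ then by Birkhoff's theorem $\gamma^{\varphi}(x)=0$ for Lebesgue-almost every $x$, so $E_0(\varphi)$ has full measure and dimension $1$, and the lemma as written fails at $\xi=0$; the paper's proof tacitly carries the same restriction $\xi>0$, since its lower bound $\sum\log a_j>(\xi-\epsilon)\varphi(n)$ becomes vacuous there.
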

\begin{proof} For any $t>1$, we introduce a family of Bernoulli
 measures $\mu_t $:
\begin{eqnarray}
\mu_t(I_n(a_1, \cdots, a_n))=e^{-nC(t)-t\sum_{j=1}^{n}\log a_j(x)}
 \end{eqnarray}
 where $C(t)=\log \sum\limits_{n=1}^{\infty}\frac{1}{n^t}$.

 Fix $x\in E_\xi(\varphi)$ and $\epsilon >0$. If $n$ is sufficiently large, we have
  \begin{eqnarray}\label{120}
  (\xi-\epsilon)\varphi(n)<\sum_{j=1}^{n}\log a_j(x)<(\xi+\epsilon)\varphi(n).\end{eqnarray}
So \begin{eqnarray*} E_\xi(\varphi)\subset
\bigcap_{N=1}^{\infty}\bigcup_{n=N}^{\infty}E_n(\epsilon),
\end{eqnarray*}
where $$E_n(\epsilon)=\{x\in [0,1):(\xi-\epsilon)\varphi(n)<
\sum_{j=1}^{n}\log a_j(x)<(\xi+\epsilon)\varphi(n)\}.$$

Now let $\mathcal{I}(n, \epsilon)$ be the family of all $n$-th order
cylinders $I_n(a_1,\cdots, a_n)$ satisfying (\ref{120}). For each
$N\geq 1$, we select all those cylinders in
$\bigcup_{n=N}^{\infty}\mathcal{I}(n, \epsilon)$ which are maximal
($I \in \bigcup_{n=N}^{\infty}\mathcal{I}(n, \epsilon)$ is maximal
if there is no other $I'$ in $\bigcup_{n=N}^{\infty}\mathcal{I}(n,
\epsilon)$ such that $I\subset I'$ and $I\neq I'$). We denote by
$\mathcal{J}(N, \epsilon)$ the set of all maximal cylinders in
$\bigcup_{n=N}^{\infty}\mathcal{I}(n, \epsilon)$. It is evident that
$\mathcal{J}(N, \epsilon)$ is a cover of $E_{\xi}(\varphi)$. Let
$I_n(a_1,\cdots, a_n)\in \mathcal{J}(N, \epsilon)$, we have
\begin{eqnarray*}
\mu_t(I_n(a_1,\cdots, a_n))=e^{-nC(t)-t\sum\limits_{j=1}^n\log
a_j}\geq e^{-nC(t)-t(\xi+\epsilon)\varphi(n)}.
\end{eqnarray*}
On the other hand, \begin{eqnarray*} \Big|I_n(a_1,\cdots,
a_n)\Big|\leq e^{-2\log q_n}\leq e^{-2\sum\limits_{j=1}^n\log
a_j}\leq e^{-2(\xi -\epsilon)\varphi(n)}.
\end{eqnarray*}
Since $\lim\limits_{n\to \infty}\frac{\varphi(n)}{n}=\infty$, for
each $s> t/2$ and $N$ large enough, we have \begin{eqnarray*}
\Big|I_n(a_1,\cdots, a_n)\Big|^s\leq \mu_t(I_n(a_1,\cdots, a_n)).
\end{eqnarray*}
This implies  $\dim E_{\xi}(\varphi) \leq
1/2=\frac{1}{b+1}$.\end{proof}

\smallskip
Now we return back to the proof of the upper bound.

 {\it Case (i)}\
$b=1$. Since $\left(\varphi(n+1)-\varphi(n)\right) \uparrow \infty$,
Lemma \ref{l3.2} implies immediately $\dim E_{\xi}(\varphi)\leq
\frac{1}{2}.$

 {\it Case (ii)}\ $b>1$. By
(\ref{120}), for each $x\in E_\xi(\varphi)$ and $n$ sufficiently
large
$$
(\xi-\epsilon)\varphi(n+1)-(\xi+\epsilon)\varphi(n)\leq \log
a_{n+1}(x)\leq
(\xi+\epsilon)\varphi(n+1)-(\xi-\epsilon)\varphi(n).$$ Take
$$L_{n+1}=e^{(\xi-\epsilon)\varphi(n+1)-(\xi+\epsilon)\varphi(n)},\
\ M_{n+1}=e^{(\xi+\epsilon)\varphi(n+1)-(\xi-\epsilon)\varphi(n)}.$$
Define $$F_N=\{x\in [0,1]: L_n\leq a_n(x)\leq M_n, \forall n\geq
N\}.$$
 Then we have
\begin{eqnarray*}E_\xi(\varphi)\subset
\bigcup_{N=1}^{\infty}F_N.
\end{eqnarray*}We can only estimate the upper bound of $\dim F_1$.
Because  $F_N$ can be written as a countable union of sets with the
same form as $F_1$, then by the $\sigma$-stability of Hausdorff
dimension, we  will have $\dim F_N=\dim F_1$. We can further assume
that $M_n\geq L_n+2$.

For any $n\geq 1$, define $$ D_n=\{(\sigma_1,\cdots,\sigma_n)\in
\mathbb{N}^n: L_k\leq \sigma_k\leq M_k, \ 1\leq k\leq n\}.
$$
 It follows that$$
F_1=\bigcap_{n\geq 1}\bigcup_{(\sigma_1,\cdots,\sigma_n)\in
D_n}J(\sigma_1,\cdots,\sigma_n),
$$where $$
J(\sigma_1,\cdots,\sigma_n):=Cl \bigcup_{\sigma\geq
L_{n+1}}I(\sigma_1,\cdots,\sigma_n,\sigma)
$$ (called an admissible cylinder of order $n$).
For any $n\geq 1$ and $s>0$, we have\begin{eqnarray*}
\sum_{(\sigma_1,\cdots,\sigma_n)\in
D_n}\Big|J(\sigma_1,\cdots,\sigma_n)\Big|^s\leq
\sum_{(\sigma_1,\cdots,\sigma_n)\in
D_n}\Big|\frac{1}{q_n^2L_{n+1}}\Big|^s\leq \frac{M_1\cdots
M_n}{\Big((L_1\cdots L_n)^2L_{n+1}\Big)^s}.
\end{eqnarray*}
It follows that
\begin{eqnarray*} \dim F_1\leq \liminf_{n\to
\infty}\frac{\log M_1+\cdots+\log M_n}{\sum\limits_{k=1}^n\log
L_k+\sum\limits_{k=1}^{n+1}\log
L_k}=\frac{\xi+\epsilon+\frac{2\epsilon}{b-1}}{(\xi-\epsilon)(b+1)-2\epsilon-\frac{4\epsilon}{b-1}}.
\end{eqnarray*}
Letting $\epsilon \to 0$, we get \begin{eqnarray*}\dim
E_{\xi}(\varphi)\leq \frac{1}{b+1}.\end{eqnarray*}

\setcounter{equation}{0}

\section{Ruelle operator theory }
There have been various works on the Ruelle transfer operator for
the Gauss dynamics. See D. Mayer \cite{Ma1}, \cite{Ma2}, \cite{Ma3},
O. Jenkinson \cite{Jen}, O. Jenkinson and M. Pollicott \cite{JenP},
M. Pollicott and H. Weiss \cite{PoW}, P. Hanus, R. D. Mauldin and M.
Urbanski \cite{HMU}. In this section we will present a general
Ruelle operator theory for conformal infinite iterated function
system which was developed in \cite{HMU} and then apply it to the
Gauss dynamics. We will also prove some properties of the pressure
function in the case of Gauss dynamics ,  which will be used later.
\subsection{Conformal infinite iterated function systems}
In this subsection, we present the conformal infinite iterated
function systems which were studied by P. Hanus, R. D. Mauldin and
M. Urbanski in \cite{HMU}. See also the book of Mauldin and Urbanski
\cite{MUbook}.

 Let $X $ be a non-empty compact connected subset of $\mathbb{R}^d $ equipped with a metric $\rho $.
 Let $I $ be an index set with   at
 least two elements and at most countable elements.
An {\it{iterated function system}} $S=\{\phi_i: X\rightarrow X: i\in
I \}$ is a collection of injective contractions
 for which there exists $0<s<1 $ such that for each $i\in I $ and all $x, y \in X $,
 \begin{eqnarray}\label{contractive}
   \rho(\phi_i(x),\phi_i(y)) \leq s \rho(x,y).
\end{eqnarray}

 Before further discussion, we are willing to give a list of
 notation.\begin{eqnarray*}
&&\bullet \ I^n:=\{\omega : \omega=(\omega_1,\cdots,\omega_n),
\omega_k\in
 I, 1\leq k\leq n \},\\
 &&\bullet \ I^*:= \cup_{n\geq1} I^n,\\
&&\bullet\ I^{\infty}:= \Pi_{i=1}^{\infty} I,\\
&&\bullet\ \phi_{\omega}:=\phi_{\omega_1} \circ \phi_{\omega_2}
\circ \cdots \circ \phi_{\omega_n}, {\rm{for}} \
\omega=\omega_1\omega_2 \cdots \omega_n \in I^n,
n\geq 1,\\
&&\bullet\  |\omega |
\ {\rm{denote\ the\ length\ of\ }} \omega \in I^* \cup I^{\infty},\\
&&\bullet \ \omega |_n =\omega_1 \omega_2 \ldots \omega_n, {\rm{if}} \ \big|w\big|\geq n, \\
&&\bullet \ [\omega|_n] =[\omega_1\ldots \omega_n]=
 \{x \in I^{\infty}: \ x_1=\omega_1, \cdots, x_n=\omega_n \},\\
 &&\bullet \ \sigma:I^{\infty}\to I^{\infty} \ {\rm{the\ shift\
 transformation}},\\
 &&\bullet \ \|\phi'_{\omega}\|:=\sup_{x\in X} |\phi'_{\omega}(x)| \
 {\rm{for}}
\ \omega\in I^*,\\
&&\bullet \ C(X) \ {\rm{space \ of\ continuous \ functions\ on}}\ X, \\
&&\bullet \ ||\cdot ||_{\infty} \ {\rm{supremum\ norm\ on\ the\
Banach\ space\ }} C(X).\end{eqnarray*}

 For $\omega \in I^{\infty} $, the set
  \begin{eqnarray*}
   \pi(\omega)=\bigcap_{n=1}^{\infty} \phi_{\omega|_n} (X)
\end{eqnarray*}
is a singleton. We also denote its only element by $\pi(\omega) $.
This thus defines a coding map $\pi: I^{\infty} \rightarrow X $. The
limit set $J$ of the iterated function system is defined by
\begin{eqnarray*}
    J:= \pi (I^{\infty})
    .
\end{eqnarray*}

 Denote by $\partial X $ the boundary of $X$ and by $\mbox{\rm Int} (X) $ the interior of $X$.

 We say that the iterated function system $S=\{\phi_i\}_{i\in I}$ satisfies the {\it{open set condition}}
 if there exists a non-empty open set
 $U\subset X $ such that $\phi_i(U)\subset U $ for each $ i\in I$ and
 $\phi_i(U) \cap \phi_j(U)= \emptyset $ for each pair $i,j\in I, i \neq j $.

 An iterated function system $S=\{\phi_i :X\rightarrow X: i\in I \}$ is said to be {\it{conformal}}
 if the following are satisfied:\\
 \indent (1)\ the open set condition is satisfied for $U= \mbox{\rm Int}(X) $;\\
 \indent (2)\ there exists an open connected set $V$ with $X\subset V \subset \mathbb{R}^d $
 such that all maps
  $\phi_i$, $i\in I $, extend to $C^1 $ conformal diffeomorphisms of $ V$ into $V$;\\
 \indent (3)\ there exist $h,\ell >0 $ such that for each $x\in \partial X \subset \mathbb{R}^d$,
 there exists an open cone
 $\mbox{\rm Con}(x,h, \ell) \subset \mbox{\rm Int}(X) $ with vertex $x$, central angle of Lebesgue measure
 $h$ and altitude $\ell $;\\
 \indent (4) (Bounded Distortion Property) there exists $K\geq 1 $ such that
 $|\phi'_{\omega}(y)|\leq K |\phi'_{\omega}(x)| $
 for every $\omega \in I^* $ and every pair of points $x,y \in V $.

 The {\it{topological pressure function}} for a conformal iterated function systems
 $S=\{\phi_i :X\rightarrow X: i\in I \}$ is defined as
 \begin{eqnarray*}
    \mathcal{P}(t):= \lim_{n\to \infty} \frac{1}{n} \log \sum_{|\omega|=n} ||\phi'_{\omega}||^t.
\end{eqnarray*}
The system $S$ is said to be {\it regular} if there exists $t\geq 0
$ such that $\mathcal{P}(t)=0 $.

 Let $\beta >0 $. A {\it{H\"older family of functions}} of order $\beta$ is a family of continuous functions
 $F =\{ f^{(i)}: X \rightarrow \mathbb{C}: i\in I \} $ such that
\begin{eqnarray*}
    V_{\beta} (F) = \sup_{n\geq 1}  V_n(F)  < \infty,
\end{eqnarray*}
 where
 \begin{eqnarray*}
   V_n(F)= \sup_{\omega \in I^n} \sup_{x,y \in X}
   \{ |f^{(\omega_1)} (\phi_{\sigma(\omega)}(x)) - f^{(\omega_1)} (\phi_{\sigma(\omega)}(y))| \}
   e^{\beta(n-1)}.
\end{eqnarray*}

A family of functions $F=\{f^{(i)}: X\to \mathbb{R}, i\in I\}$ is
said to be {\it{strong}} if
\begin{eqnarray*}
    \sum_{i \in I} ||e^{f^{(i)}}||_{\infty} < \infty.
\end{eqnarray*}

Define the {\it Ruelle operator} on $C(X)$ associated to $F$ as
 \begin{eqnarray*}
    \mathcal L_F (g)(x) := \sum_{i\in I} e^{f^{(i)}(x)} g(\phi_i(x)).
 \end{eqnarray*}
Denote by $\mathcal L^*_F $ the dual operator of $\mathcal L_F$.

The {\it{topological pressure}} of $F$ is defined by
\begin{eqnarray*}
   P(F):=\lim_{n \to \infty} \frac{1}{n} \log \sum_{|\omega| =n} \exp \Bigg(\sup_{x\in X}
   \sum_{j=1}^{n} f^{\omega_j} \circ \phi_{\sigma^j\omega}(x) \Bigg).
\end{eqnarray*}

A measure $\nu $ is called  {\it{$F$-conformal}} if the following are satisfied:\\
\indent (1) $\nu$ is supported on $J$;\\
\indent (2) for any Borel set $A\subset X$ and any $\omega\in I^*$,
\begin{eqnarray*}
     \nu (\phi_{\omega}(A))= \int_A \exp \left( \sum_{j=1}^{n} f^{(\omega_j)} \circ \phi_{\sigma^j \omega}
     -P(F)|\omega| \right) d\nu;
\end{eqnarray*}
\indent (3) $\nu(\phi_{\omega}(X) \cap \phi_{\tau} (X))=0 \quad
  \omega,\tau\in I^n, \omega\neq \tau, n\geq 1. $

Two functions $\phi, \varphi\in C(X)$ are said to be {\it
cohomologous} with respect to the transformation $T$, if
 there exists $u\in C(X)$ such that $$
\varphi(x)=\phi(x)+u(x)-u(T(x)).
 $$

 The following two theorems are due to Hanus, Mauldin and
Urbanski \cite{HMU}.
\begin{thm}[\cite{HMU}]\label{Gibbs-Measure}
For a conformal iterated function system $S=\{\phi_i: X\rightarrow
X: i\in I \}$ and a strong H\"older family of functions $ F =\{
f^{(i)}: X \rightarrow \mathbb{C}: i\in I \} $, there exists a
unique $F$-conformal probability measure $\nu_F $  on $X$ such that
$\mathcal L^*_F \nu_F = e^{P(F)}\nu_F $. There exists  a unique
shift invariant probability measure $\tilde{\mu}_F $ on $I^{\infty}
$ such that $\mu_F:=\tilde{\mu}_F \circ \pi^{-1}$ is equivalent to
$\nu_F$ with bounded Radon-Nikodym derivative. Furthermore, the
Gibbs property is satisfied:
\begin{eqnarray*}
 \frac{1}{C} \leq \frac{\tilde{\mu}_F([\omega|_n])}
 {\exp \left( \sum_{j=1}^n f^{(\omega_j)} (\pi(\sigma^j \omega)) - nP(F)\right)}
 \leq C  .
\end{eqnarray*}
\end{thm}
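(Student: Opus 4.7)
The plan is to obtain $\nu_F$ as an eigenmeasure of the dual operator $\mathcal{L}_F^*$ at eigenvalue $e^{P(F)}$, then produce $\mu_F$ by multiplying by a continuous eigenfunction of $\mathcal{L}_F$ itself, and finally lift $\mu_F$ to a shift-invariant measure $\tilde\mu_F$ on $I^\infty$ via the coding map $\pi$. The preparatory step is to verify that the strong H\"older hypothesis $\sum_i\|e^{f^{(i)}}\|_\infty<\infty$ together with the H\"older control $V_\beta(F)<\infty$, the uniform contraction (\ref{contractive}) and the bounded distortion property, make $\mathcal{L}_F$ a bounded positive operator on $C(X)$ and yield a uniform distortion estimate
\[
\Big|\sum_{j=1}^n f^{(\omega_j)}(\phi_{\sigma^j\omega}(x))-\sum_{j=1}^n f^{(\omega_j)}(\phi_{\sigma^j\omega}(y))\Big|\le C_0
\]
valid for all $\omega\in I^n$ and all $x,y\in X$; this bound is the workhorse for everything that follows.

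Next I would apply the Schauder--Tychonoff theorem to the continuous map $\nu\mapsto \mathcal{L}_F^*\nu/(\mathcal{L}_F^*\nu)(1)$ on the weak-$*$ compact convex set of Borel probability measures on $X$, producing a fixed point $\nu_F$ and a scalar $\lambda>0$ with $\mathcal{L}_F^*\nu_F=\lambda\nu_F$. Iterating this identity and integrating the constant function $1$ identifies $\lambda$ with $e^{P(F)}$ via the definition of the pressure. The $F$-conformality identity for arbitrary $\omega\in I^*$ is then obtained by iterating the eigenequation against continuous functions supported in $\phi_\omega(X)$, with the open set condition supplying clause (3). To construct the invariant measure, I would let $h$ be any weak-$*$ limit of the Cesaro averages $n^{-1}\sum_{k=0}^{n-1}e^{-kP(F)}\mathcal{L}_F^k 1$; the uniform distortion estimate forces $h$ to be bounded above and bounded below away from $0$ on $J$, so $\mu_F:=h\,\nu_F$ is a probability measure equivalent to $\nu_F$ with bounded Radon--Nikodym derivative. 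Its pullback $\tilde\mu_F:=\mu_F\circ\pi^{-1}$ is shift-invariant because $\mathcal{L}_F h=e^{P(F)}h$ is precisely the adjoint of shift invariance along the fibres of $\pi$, and the Gibbs estimate is then immediate from $F$-conformality combined with the uniform distortion bound and the two-sided control on $d\mu_F/d\nu_F$.

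The principal obstacle, compared with classical Ruelle theory over a finite alphabet, is that $I^\infty$ is not compact and $\mathcal{L}_F$ is not a compact operator, so the usual spectral Perron--Frobenius theorem cannot be quoted; every step must be carried out on the compact base $X$, and the strong H\"older hypothesis is exactly what is needed to keep the infinite sums defining $\mathcal{L}_F$ uniformly convergent. The other delicate point is uniqueness of $\nu_F$ (and hence of $\tilde\mu_F$): this demands a mixing/contraction argument for the normalized operator $e^{-P(F)}\mathcal{L}_F$, leaning on $V_\beta(F)<\infty$, to establish that the eigenspace of $\mathcal{L}_F^*$ at $e^{P(F)}$ is one-dimensional.
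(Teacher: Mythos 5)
The paper itself does not prove this statement: Theorem~\ref{Gibbs-Measure} is quoted verbatim from Hanus--Mauldin--Urba\'nski \cite{HMU} (``The following two theorems are due to Hanus, Mauldin and Urbanski''), and no argument appears in the text. So there is no in-paper proof to compare against, only the cited source.

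That said, your sketch reproduces the standard line of attack from \cite{HMU} (and, before it, the finite-alphabet Ruelle--Bowen theory): eigenmeasure for $\mathcal{L}_F^*$ via Schauder--Tychonoff on the compact set of Borel probabilities on $X$, identification of the eigenvalue with $e^{P(F)}$ via $(\mathcal{L}_F^n \mathbf{1})(x)\asymp e^{nP(F)}$ from bounded distortion, an eigenfunction $h$ as a Ces\`aro/Arzel\`a--Ascoli limit of $e^{-nP(F)}\mathcal{L}_F^n\mathbf{1}$ bounded away from $0$ and $\infty$, and $\mu_F=h\,\nu_F$. You also correctly flag the two genuinely delicate points (non-compactness of $I^\infty$ versus compactness of $X$; uniqueness via a contraction/mixing argument for the normalized operator). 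Two issues are worth fixing. First, your lift is stated backwards: you write ``Its pullback $\tilde\mu_F:=\mu_F\circ\pi^{-1}$,'' but $\pi\colon I^\infty\to X$ is not invertible and $\mu_F\circ\pi^{-1}$ is not a measure on $I^\infty$. The correct relation is $\mu_F=\tilde\mu_F\circ\pi^{-1}$ (push-forward), as in the statement; the measure $\tilde\mu_F$ must be built on $I^\infty$ directly --- define it on cylinders by $\tilde\mu_F([\omega|_n]):=\mu_F(\phi_{\omega|_n}(X))$ (or by the Gibbs formula), verify Kolmogorov consistency and shift-invariance using $\mathcal{L}_Fh=e^{P(F)}h$, and only then observe that its push-forward under $\pi$ is $\mu_F$. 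Second, ``the Gibbs estimate is then immediate'' understates the work: it requires combining $F$-conformality of $\nu_F$ on the sets $\phi_{\omega|_n}(X)$ with the uniform distortion bound $C_0$ \emph{and} the two-sided bound on $h=d\mu_F/d\nu_F$; this is where the strong H\"older hypothesis does its real job, and it is the estimate on which everything downstream in the paper (Proposition~\ref{measure_potential}, Lemma~\ref{example-measure}, the dimension computations in Section~5) depends.
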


Let $\Psi=\{\psi^{(i)}: X\rightarrow \mathbb{R}: i\in I \} $ and
  $ F=\{f^{(i)}: X\rightarrow \mathbb{R}: i\in I \}$ be two families of
   real-valued H\"older functions.
   We define the {\it{amalgamated functions}} on $I^{\infty}$ associated to $\Psi $ and $F$ as follows:
   \begin{eqnarray*}
   \tilde{\psi}(\omega):=\psi^{(\omega_1)}(\pi(\sigma\omega)), \qquad
     \tilde{f}(\omega):=f^{(\omega_1)}(\pi(\sigma\omega)) \qquad \forall \omega \in I^{\infty}.
\end{eqnarray*}

\begin{thm}[\cite{HMU}, see also \cite{MUbook}, pp. 43-48]\label{potential-analytic}
Let $\Psi$ and $F$ be two families of
   real-valued H\"older functions. Suppose the sets
   $\{i \in I: \sup_x (\psi^{(i)}(x)) >0 \}$ and $\{i \in I: \sup_x (f^{(i)}(x)) >0 \} $ are finite. Then the function
   $(t,q) \mapsto P(t,q)=P(t\Psi+qF)  $, is real-analytic with respect to $(t,q) \in \mbox{\rm Int} (D)  $, where
   $$D= \left\{ (t,q): \mathcal \sum_{i\in I } \exp ( \sup_x (t\psi^{(i)}(x)+qf^{(i)}(x))  ) < \infty \right\} .$$
   Furthermore, if $t\Psi+qF $ is a strong H\"older family for $(t,q) \in D $ and
   \begin{eqnarray*}
       \int (|\tilde{f}| +|\tilde{\psi}|) d\tilde{\mu}_{t,q} < \infty,
   \end{eqnarray*}
   where $\tilde{\mu}_{t,q}:=\tilde{\mu}_{t\Psi+qF} $ is obtained by
Theorem \ref{Gibbs-Measure},
   then
   \begin{eqnarray*}
     \frac{\partial P}{\partial t}= \int \tilde{\psi} d\tilde{\mu}_{t,q} \qquad {\rm{and}} \qquad
     \frac{\partial P}{\partial q}= \int \tilde{f} d\tilde{\mu}_{t,q}.
\end{eqnarray*}
   If \ $t\tilde{\psi}+q\tilde{f} $ is not cohomologous to a constant function,
   then $P(t,q) $ is strictly convex and
   \begin{eqnarray*}
H(t,q):= \left(
  \begin{array}{cccc}
    \frac{\partial^2 P}{\partial t^2} & \frac{\partial^2 P}{\partial t \partial q} \\
    \quad &\quad \\
    \frac{\partial^2 P}{\partial t \partial q} & \frac{\partial^2 P}{\partial q^2} \\
  \end{array}
\right)
\end{eqnarray*}
is positive definite.
\end{thm}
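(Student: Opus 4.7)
The plan is to base the entire proof on the spectral theory of the Ruelle operator $\mathcal{L}_{t\Psi+qF}$ acting on a Banach space of $\beta$-H\"older functions on $I^{\infty}$ (pulled back from $X$ via $\pi$). By Theorem \ref{Gibbs-Measure} and its proof, for each real $(t,q)\in \mbox{\rm Int}(D)$ the operator has $e^{P(t,q)}$ as a simple, isolated leading eigenvalue, with strictly positive eigenfunction $h_{t,q}$, dual eigenmeasure $\nu_{t,q}$, and a spectral gap separating $e^{P(t,q)}$ from the rest of the spectrum. This spectral picture is the engine driving everything below; the key preliminary observation is that, because of the defining condition of $D$, the series $\sum_{i\in I} e^{t\psi^{(i)}+qf^{(i)}}$ converges in the sup norm uniformly on a complex neighborhood of any point in $\mbox{\rm Int}(D)$.

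For the analyticity statement I would invoke Kato's analytic perturbation theory. The map $(t,q)\mapsto \mathcal{L}_{t\Psi+qF}$ extends to a holomorphic operator-valued function on a complex neighborhood of $\mbox{\rm Int}(D)$; the spectral gap from the Gibbs measure construction is stable under small perturbations, so the leading eigenvalue and eigenprojection are locally holomorphic in $(t,q)$. Taking logarithms yields that $P(t,q)$ is real-analytic on $\mbox{\rm Int}(D)$.

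For the first-order derivatives, I would normalize $\int h_{t,q}\,d\nu_{t,q}=1$ and differentiate the eigen-equation $\mathcal{L}_{t\Psi+qF}h_{t,q}=e^{P(t,q)}h_{t,q}$ in $t$. Using that $\partial_t \mathcal{L}_{t\Psi+qF}(h)=\mathcal{L}_{t\Psi+qF}(\tilde\psi\, h)$ and integrating against $\nu_{t,q}$, the terms involving $\partial_t h_{t,q}$ cancel because $\mathcal{L}^*_{t\Psi+qF}\nu_{t,q}=e^{P(t,q)}\nu_{t,q}$. Recalling $d\tilde\mu_{t,q}=h_{t,q}\,d\nu_{t,q}$ (after identification via $\pi$), this gives $\partial P/\partial t=\int\tilde\psi\,d\tilde\mu_{t,q}$, and symmetrically for $\partial P/\partial q$; the hypothesis $\int(|\tilde\psi|+|\tilde f|)\,d\tilde\mu_{t,q}<\infty$ is what makes these manipulations legal.

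For strict convexity I would compute the second derivative in a direction $(a,b)$ by a second differentiation of the eigen-equation, arriving at the asymptotic-variance formula
\begin{equation*}
\frac{d^{2}}{ds^{2}}\bigg|_{s=0}P(t+sa,q+sb)=\lim_{n\to\infty}\frac{1}{n}\int\Bigl(S_n(a\tilde\psi+b\tilde f)-n\!\!\int\!(a\tilde\psi+b\tilde f)\,d\tilde\mu_{t,q}\Bigr)^{2}d\tilde\mu_{t,q},
\end{equation*}
where $S_n$ denotes Birkhoff sums for $\sigma$. By the Livsic-type characterization in thermodynamic formalism, this variance vanishes exactly when $a\tilde\psi+b\tilde f$ is cohomologous to a constant. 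Under the hypothesis that $t\tilde\psi+q\tilde f$ is not cohomologous to a constant, one rules out such degeneracies for all $(a,b)\ne(0,0)$, whence $H(t,q)$ is positive definite. I expect the main obstacle to be the rigorous perturbation-theoretic setup in the \emph{infinite}-alphabet regime: one must verify quasi-compactness of $\mathcal{L}_{t\Psi+qF}$ uniformly on a complex neighborhood, and control the exchange of limits in the variance identity, both of which rely decisively on the strong H\"older hypothesis and on the finiteness of $\{i: \sup \psi^{(i)}>0\}$ and $\{i:\sup f^{(i)}>0\}$.
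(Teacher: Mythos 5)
This theorem is not proved in the paper; it is imported verbatim from Hanus--Mauldin--Urba\'nski \cite{HMU} (see also \cite{MUbook}, pp.~43--48), so there is no in-paper argument to compare against. Your sketch is nonetheless the standard route and, as far as one can tell at this level of detail, it is the same route as in the cited sources: holomorphic dependence of $\mathcal{L}_{t\Psi+qF}$ on $(t,q)$, spectral gap plus Kato perturbation theory for real-analyticity of $P$, differentiation of the eigen-equation paired against the conformal measure for the first-order formulas, and the asymptotic-variance expression together with the Livsic-type rigidity dichotomy for strict convexity and positive definiteness of $H(t,q)$.

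Two remarks on what your outline leaves compressed, both of which are exactly where the heavy lifting sits in \cite{HMU} and \cite{MUbook}. First, you assert that the defining summability of $D$ gives convergence of $\sum_i e^{t\psi^{(i)}+qf^{(i)}}$ ``uniformly on a complex neighborhood''; that does hold, but to get a bounded operator with a spectral gap on a suitable H\"older class over the non-compact symbol space $I^\infty$ one needs the full strong H\"older hypothesis and an Ionescu-Tulcea--Marinescu / Lasota--Yorke type inequality, not just pointwise summability; that is the actual content of the quasi-compactness step. Second, for $\partial P/\partial t=\int\tilde\psi\,d\tilde\mu_{t,q}$ one must justify differentiating under the sum defining $\mathcal{L}_{t\Psi+qF}$ when $\tilde\psi$ is unbounded (as it is for the Gauss system); the integrability hypothesis $\int(|\tilde\psi|+|\tilde f|)\,d\tilde\mu_{t,q}<\infty$ is necessary but the dominated-convergence justification still needs a local uniform bound coming from $D$ being open. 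You flag both issues as ``obstacles'' rather than resolving them, so the proposal is a correct high-level outline rather than a proof, but it is faithful to the proof the paper relies on by reference.
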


\subsection{Continued fraction dynamical system}
We apply the theory in the precedent subsection to the continued
fraction dynamical system. Let $X=[0,1]$ and $I=\mathbb{N} $. The
continued fraction dynamical system can be viewed as an iterated
function system:
$$ S=\left\{\psi_i(x)= \frac{1}{i+x}: i\in \mathbb{N} \right\} .$$
Recall that the projection mapping $\pi: I^{\infty} \rightarrow X $
is defined by
\begin{eqnarray*}
 \pi(\omega):=\bigcap_{n=1}^{\infty} \psi_{\omega|_n} (X), \ \forall
 \omega \in I^{\infty}.
\end{eqnarray*}

 Notice that $\psi'_1(0)=-1 $, thus
(\ref{contractive}) is not satisfied. However, this is not a real
problem, since we can consider the system of second level maps and
replace $S$ by $\tilde{S}:=\{ \psi_{i}\circ\psi_{j}: i,j\in
\mathbb{N} \}$. In fact,  for any $x\in [0,1)$\begin{eqnarray*}
(\psi_{i}\circ\psi_{j})'(x)=\Big(\frac{1}{i+\frac{1}{j+x}}\Big)'=\Big(\frac{1}{i(j+x)+1}\Big)^2\leq
\frac{1}{4}.
\end{eqnarray*}

In the following, we will collect or prove some facts on the
continued fraction dynamical system, which will be useful for
applying Theorem \ref{Gibbs-Measure} and \ref{potential-analytic}.

\begin{lem}[\cite{MU1}]\label{l4.3} The continued fraction dynamical
system $S$ is regular and conformal.
\end{lem}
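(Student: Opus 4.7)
The plan is to verify the two assertions (regularity and conformality) more or less directly from the definitions, using the technical material already assembled in Section~2 of the paper. First I would address the mild nuisance flagged in the excerpt: since $\psi_1'(0)=-1$, the uniform contraction condition $(\ref{contractive})$ fails for $S$ itself. I would replace $S$ by the second-iterate system $\tilde S=\{\psi_i\circ\psi_j:i,j\in\mathbb N\}$, whose derivatives are bounded by $1/4$ on $[0,1]$ as computed in the excerpt. The two systems have the same limit set and the same open set $U=(0,1)$, and $\tilde S$ inherits conformality and regularity from $S$ and vice-versa, so it suffices to work with whichever is convenient at each step.

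For conformality I would check the four axioms in order. \textbf{Open set condition:} taking $U=(0,1)$, the images $\psi_i(U)=(1/(i+1),1/i)$ are pairwise disjoint and contained in $U$. \textbf{Conformal extension:} each $\psi_i$ is a M\"obius map analytic on $\mathbb C\setminus\{-i\}$, hence extends to a conformal diffeomorphism of a fixed open neighborhood $V\supset[0,1]$ (for instance a small complex disk around $[0,1]$ avoiding the negative integers) into itself. \textbf{Cone condition:} this is trivial here because $X=[0,1]\subset\mathbb R$ has only two boundary points $0$ and $1$, and at each we may take an inward open subinterval as the cone of any desired aperture and altitude. \textbf{Bounded distortion:} $\psi_\omega$ is the inverse branch of $T^{|\omega|}$ on $I_{|\omega|}(\omega)$, so $|\psi_\omega'(x)|=1/|(T^{|\omega|})'(\psi_\omega(x))|$, and the estimate $|\psi_\omega'(y)|/|\psi_\omega'(x)|\le K^2$ (with $K=e^4$) is then immediate from Lemma~\ref{p2}.

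For regularity I need some $t\ge 0$ with $\mathcal P(t)=0$. I would take $t=1$. By the bounded distortion just established together with Lemma~\ref{lemma2.5} and Lemma~\ref{length}, for every $\omega=(\omega_1,\dots,\omega_n)\in I^n$,
\[
\|\psi_\omega'\|\asymp\frac{1}{q_n(\omega)^2}\asymp\bigl|I_n(\omega_1,\dots,\omega_n)\bigr|.
\]
Summing over $\omega\in I^n$ and using that the $n$-th order cylinders form a partition of $[0,1)$ of full Lebesgue measure, one gets
\[
\sum_{|\omega|=n}\|\psi_\omega'\|\asymp\sum_{|\omega|=n}\bigl|I_n(\omega_1,\dots,\omega_n)\bigr|=1,
\]
so $\mathcal P(1)=\lim_n n^{-1}\log\Theta(1)=0$ where the implied constants are absorbed in the limit.

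I do not expect any real obstacle: every ingredient is already in the excerpt (Lemmas~\ref{l2.2}, \ref{length}, \ref{p2}, \ref{lemma2.5}), and the only subtle point is the failure of uniform contraction for $S$, which is handled by the standard passage to $\tilde S$. The proof is essentially a bookkeeping exercise matching the abstract conformal iterated function system framework of \cite{HMU} to the already-familiar Gauss dynamics.
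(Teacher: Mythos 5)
The paper does not actually prove Lemma~\ref{l4.3}; it is quoted from Mauldin--Urba\'nski \cite{MU1}, so there is no in-text argument to compare against. Your blind proof is essentially the standard verification one finds there, and it is correct in substance: the open set condition with $U=(0,1)$, the cone condition being vacuous in one dimension, bounded distortion via the Jacobian estimate of Lemma~\ref{p2}, and regularity via $\mathcal P(1)=0$ because $\|\psi_\omega'\|\asymp|I_n(\omega)|$ and the $n$-th order cylinders partition $[0,1)$, are all the right ingredients. Two small corrections to the bookkeeping: the axioms in Section~4 require the extension domain $V$ to satisfy $X\subset V\subset\mathbb R^d$, so you should take $V$ to be a small open interval $(-\delta,1+\delta)$ rather than a complex disk (restricting your M\"obius extension to the real line gives this at once); and note that for the original system $S$ one does not in fact have $\psi_1(V)\subset V$ for any such interval $V$ (since $\psi_1(1-\delta)=\frac{1}{1-\delta}>1+\delta$), so the passage to the second-level system $\tilde S$ is needed not only to restore uniform contraction but also to make the extension axiom literally hold --- which is consistent with your stated plan to work with whichever of $S,\tilde S$ is convenient, but worth flagging explicitly.
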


For the investigation in the present paper, our problems are tightly
connected to the following two  families of H\"{o}lder functions.
$$\Psi=\{\log
|\psi'_i|: i\in \mathbb{N}\}\ {\rm{and}}\ F=\{-\log i: i\in
\mathbb{N}\}.
$$ \begin{rem}
We mention that our method used here is also applicable to other
potentials than  the two special families introduced here.
\end{rem}

The families $\Psi $ and $F$ are H\"older families and their
amalgamated functions are equal to
$$\tilde{\psi}(\omega)=-2\log (\omega_1+ \pi(\sigma\omega)),
\quad \tilde{f} (\omega)=-\log \omega_1  \quad \forall  \omega\in
\mathbb{N}^{\infty}. $$

For our convenience, we will consider the function $t\Psi-qF$
instead of $t\Psi+qF$.

\begin{lem}\label{l4.5}Let $D:=\{(t,q): 2t-q
> 1 \}.$
For any $(t,q)\in D$, we have

{\rm (i)} \ The family $t\Psi-qF:=\{t\log |\psi'_i|+q\log i: i\in
\mathbb{N}\} $ is H\"older and strong.

{\rm (ii)} \ The topological pressure $P$ associated to the
potential $t\Psi-qF$ can be written as
\begin{eqnarray*}
P(t,q)=\lim_{n\to \infty} \frac{1}{n} \log
\sum_{\omega_1,\cdots,\omega_n} \exp \Bigg(
  \sup_{x} \log \prod_{j=1}^{n}
\omega_j^q([\omega_j,\cdots, \omega_n+x])^{2t}\Bigg).
\end{eqnarray*}
\end{lem}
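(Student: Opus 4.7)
The proof plan is to unpack the two assertions into three concrete estimates and one identification. Since $\psi_i(x)=1/(i+x)$, I have $\log|\psi_i'(x)|=-2\log(i+x)$, so the $i$-th member of $t\Psi-qF$ is the function $x \mapsto -2t\log(i+x)+q\log i$. This explicit form reduces everything to elementary calculus on $[0,1]$ together with the contraction of $\tilde S=\{\psi_i\circ\psi_j\}$ by $\tfrac14$ already recorded in the paragraph preceding the lemma.

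For the H\"older part of (i), the $q\log i$ summand is independent of $x$ and contributes nothing to $V_n$. Hence
\[
V_n(t\Psi-qF)=\sup_{\omega\in\mathbb N^{n}}\sup_{x,y\in X}
2|t|\,\bigl|\log(\omega_1+\phi_{\sigma\omega}(x))-\log(\omega_1+\phi_{\sigma\omega}(y))\bigr|\,e^{\beta(n-1)}.
\]
Since $\omega_1\ge 1$, the log-derivative is bounded by $1$, so the difference of logarithms is bounded by $|\phi_{\sigma\omega}(x)-\phi_{\sigma\omega}(y)|$. Because $\phi_{\sigma\omega}=\psi_{\omega_2}\circ\cdots\circ\psi_{\omega_n}$ is a composition of $n-1$ branches whose pairs contract by $\tfrac14$, the diameter of its image is at most $(1/2)^{n-1}$. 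Choosing any $\beta<\log 2$ gives $V_\beta(t\Psi-qF)<\infty$.

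For the strong part of (i), I compute
\[
\bigl\|\exp(t\log|\psi_i'|+q\log i)\bigr\|_\infty
= i^{q}\sup_{x\in[0,1]}(i+x)^{-2t},
\]
which, split according to the sign of $t$, is comparable to $i^{q-2t}$ for all $i\ge 1$. Summability over $i\in\mathbb N$ is therefore equivalent to $2t-q>1$, precisely the defining inequality for $D$.

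For (ii) I only need to substitute $f^{(i)}(x)=t\log|\psi_i'(x)|+q\log i$ into the general formula for $P(F)$ recalled in Subsection 4.1. Because $\psi_{\omega_{j+1}}\circ\cdots\circ\psi_{\omega_n}(x)=[\omega_{j+1},\ldots,\omega_n+x]$, one has the continued-fraction identity
\[
\omega_j+\phi_{\sigma^j\omega}(x)=\frac{1}{[\omega_j,\omega_{j+1},\ldots,\omega_n+x]},
\]
so $f^{(\omega_j)}(\phi_{\sigma^j\omega}(x))=2t\log[\omega_j,\ldots,\omega_n+x]+q\log\omega_j$. Summing over $j$ and exponentiating yields exactly the product $\prod_j\omega_j^{\,q}([\omega_j,\ldots,\omega_n+x])^{2t}$ under the supremum, proving the stated formula. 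I do not anticipate a genuine obstacle: the only subtlety worth double-checking is the bookkeeping in (ii), namely that the innermost term indeed reads $\omega_n+x$ (from the empty composition at $j=n$) rather than $\omega_n$ alone, which is what makes the supremum over $x\in[0,1]$ the correct quantity.
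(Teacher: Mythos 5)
Your proof is correct and matches the paper's approach: the strong condition reduces to $\sum_i i^{q-2t} = \zeta(2t-q) < \infty$ on $D$, and (ii) follows by substituting the explicit branches into the general pressure formula using $\omega_j + \phi_{\sigma^j\omega}(x) = 1/[\omega_j,\ldots,\omega_n+x]$. You go slightly further than the paper by also verifying the H\"older estimate (the paper simply asserts $\Psi$ and $F$ are H\"older families before the lemma and proves only the strong part), and you are a bit more careful about $t<0$, where the supremum sits at $x=1$ and gives only a quantity comparable to $i^{q-2t}$ rather than equal to it.
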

\begin{proof} The assertion on the domain $D$  follows from
\begin{eqnarray*} \frac{1}{4^t}\zeta(2t-q)=\mathcal
L_{t\Psi-qF}1=\sum_{i=1}^{\infty}\frac{i^q}{(i+x)^{2t}}\leq
\sum_{i=1}^{\infty}i^{q-2t}=\zeta(2t-q).
\end{eqnarray*}
where $\zeta(2t-q) $ is the  Riemann zeta function, defined by
\begin{eqnarray*}
   \zeta(s):=\sum_{n=1}^{\infty} \frac{1}{n^{s}} \quad \forall s>1.
\end{eqnarray*}

(i) \ For $(t,q)\in D$, write $(t\Psi- qF)^{(i)}:=t\log
|\psi'_i|+q\log i$. Then
\begin{eqnarray*}
  \sum_{i\in I} \Big\| \exp\left\{(t\Psi-qF)^{(i)}\right\}\Big\|_{\infty} =
  \sum_{i=1}^{\infty} \Big\|\frac{i^q}{(i+x)^{2t}}\Big\|_{\infty}
  =\sum_{i=1}^{\infty} i^{q-2t}= \zeta(2t-q)< \infty.
\end{eqnarray*}
Thus $t\Psi-qF$ is strong.

(ii) \ It suffices  to noticed that
\begin{eqnarray*}
   \sup_{x} \Bigg(
   \sum_{j=1}^{n} (t|\psi'_{\omega_j}|+q\log \omega_j) \circ \psi_{\sigma^j\omega}(x) \Bigg)
   =\sup_{x} \log \prod_{j=1}^{n}
\omega_j^q([\omega_j,\cdots, \omega_n+x])^{2t}.
\end{eqnarray*} \end{proof}

Denote by $\mathcal{L}_{t\Psi-qF}^* $ the conjugate operator of
$\mathcal{L}_{t\Psi-qF} $. Applying  Theorem \ref{Gibbs-Measure}
with the help of Lemma \ref{l4.3} and Lemma \ref{l4.5}, we get
\begin{pro}\label{measure_potential}
 For each $(t,q)\in D $, there exists a unique
 $t\Psi-qF $-conformal
probability measure $\nu_{t,q} $ on $[0,1] $ such that
$\mathcal{L}_{t\Psi-qF}^* \nu_{t,q} = e^{P(t,q)} \nu_{t,q} $, and a
unique shift invariant probability measure $\tilde{\mu}_{t,q} $ on
$\mathbb{N}^{\infty}$ such that $\mu_{t,q}:=\tilde{\mu}_{t,q} \circ
\pi^{-1} $ on $[0,1] $ is equivalent to $\nu_{t,q} $ and
\begin{eqnarray*}
 \frac{1}{C} \leq \frac{\tilde{\mu}_{t,q}([\omega|_n])}
 {\exp \left( \sum_{j=1}^n (t\Psi-qF)^{(\omega_j)} (\pi(\sigma^j \omega)) - nP(t,q)\right)}
 \leq C  \qquad \forall \omega \in \mathbb{N}^{\infty}.
\end{eqnarray*}
\end{pro}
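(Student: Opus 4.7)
The plan is to deduce this proposition as a direct application of Theorem \ref{Gibbs-Measure} to the continued fraction system, using the groundwork already laid in Lemmas \ref{l4.3} and \ref{l4.5}. First I would observe that Lemma \ref{l4.3} asserts that $S=\{\psi_i\}_{i\in\mathbb{N}}$ is a conformal iterated function system, and Lemma \ref{l4.5}(i) says that for $(t,q)\in D$ the family $t\Psi-qF=\{t\log|\psi_i'|+q\log i: i\in\mathbb{N}\}$ is a strong H\"older family. These are exactly the two hypotheses required by Theorem \ref{Gibbs-Measure}.

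Next I would address the one technical wrinkle: the map $\psi_1$ fails the uniform contraction bound \eqref{contractive} since $\psi_1'(0)=-1$. This is handled exactly as indicated just after Lemma \ref{l4.3}, by passing to the second-level system $\tilde S=\{\psi_i\circ\psi_j: i,j\in\mathbb{N}\}$, for which one computes $|(\psi_i\circ\psi_j)'(x)|\le 1/4$ uniformly. Since $\tilde S$ has the same limit set and the same projection structure as $S$, and since the potential $t\Psi-qF$ transfers to a H\"older family on $\tilde S$ whose strong summability for $(t,q)\in D$ follows from $\sum_{i,j} i^{q-2t}j^{q-2t}<\infty$ (because $2t-q>1$), the hypotheses of Theorem \ref{Gibbs-Measure} apply verbatim to $\tilde S$.

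Applying Theorem \ref{Gibbs-Measure} then yields: a unique $(t\Psi-qF)$-conformal probability measure $\nu_{t,q}$ on $[0,1]$, satisfying $\mathcal L_{t\Psi-qF}^*\nu_{t,q}=e^{P(t,q)}\nu_{t,q}$ by definition of conformality together with the formula for $P(t,q)$ in Lemma \ref{l4.5}(ii); a unique shift-invariant probability measure $\tilde\mu_{t,q}$ on $\mathbb{N}^{\infty}$ equivalent (via bounded Radon-Nikodym derivative) to $\nu_{t,q}\circ\pi$; and the two-sided Gibbs estimate
\[
\frac{1}{C}\le\frac{\tilde\mu_{t,q}([\omega|_n])}{\exp\bigl(\sum_{j=1}^n(t\Psi-qF)^{(\omega_j)}(\pi(\sigma^j\omega))-nP(t,q)\bigr)}\le C.
\]
The final small point to check is that the pressure $P(t,q)$ produced by the second-level system equals twice the one defined via Lemma \ref{l4.5}(ii), which is reconciled by the identity $P_{\tilde S}=2P_S$ for the topological pressure upon passing to iterates; after this normalization the statement of the proposition follows exactly.

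The main obstacle, if any, is purely notational: keeping track of the second-iterate reduction while ensuring that the conformality relation, the eigenequation $\mathcal L^*\nu=e^P\nu$, and the Gibbs bounds are stated in terms of the original system $S$ rather than $\tilde S$. Once that bookkeeping is done, there is no genuine analytic difficulty since the hard work (existence of eigenmeasures, construction of the invariant measure via a fixed-point argument, and the distortion bounds giving the Gibbs property) has already been carried out in \cite{HMU}.
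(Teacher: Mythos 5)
Your proposal is correct and is essentially the paper's own argument: the paper states Proposition \ref{measure_potential} as a direct consequence of Theorem \ref{Gibbs-Measure} together with Lemma \ref{l4.3} (conformality/regularity) and Lemma \ref{l4.5} (strong H\"older family on $D$), with the second-level system $\tilde S=\{\psi_i\circ\psi_j\}$ invoked earlier in the subsection to fix the lack of uniform contraction. The extra bookkeeping you raise about reconciling the pressure of $\tilde S$ with $P(t,q)$ is a reasonable point the paper leaves implicit, but it does not change the route.
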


\begin{lem}\label{l4.7}
For the amalgamated functions $\tilde{\psi}(\omega)=-2\log
(\omega_1+ \pi(\sigma\omega))$ and $\tilde{f} (\omega)=-\log
\omega_1$, we have\\
  \begin{eqnarray}\label{twoequ}
  \qquad  -\int \log|T'(x)| \mu_{t,q}= \int \tilde{\psi} d
    \tilde{\mu}_{t,q} \quad
{\rm{and}} \quad
 \int \log a_1(x) d \mu_{t,q}= -\int \tilde{f} d \tilde{\mu}_{t,q}.
\end{eqnarray}
and  $t\tilde{\psi}-q\tilde{f} $ is not cohomologous to a
constant.\end{lem}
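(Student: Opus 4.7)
The two integral identities follow from a change of variables. By Proposition~\ref{measure_potential}, $\mu_{t,q} = \tilde{\mu}_{t,q} \circ \pi^{-1}$, so it suffices to check the pointwise identities $\tilde{\psi}(\omega) = -\log|T'(\pi(\omega))|$ and $-\tilde{f}(\omega) = \log a_1(\pi(\omega))$. Since $\pi(\omega) = 1/(\omega_1 + \pi(\sigma\omega))$ and $\log|T'(x)| = -2\log x$, the first reads $-\log|T'(\pi(\omega))| = 2\log\pi(\omega) = -2\log(\omega_1+\pi(\sigma\omega)) = \tilde{\psi}(\omega)$, and the second is immediate from $a_1(\pi(\omega)) = \omega_1$. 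Integrating against $\tilde{\mu}_{t,q}$ yields both halves of \eqref{twoequ}.

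For the non-cohomology claim I will argue by contradiction, using Livsic-type obstructions coming from periodic orbits of the shift $\sigma$. Suppose that for some $(t,q)\in D$ there exist a continuous $u$ on $\mathbb{N}^{\infty}$ and a constant $C$ with $t\tilde{\psi} - q\tilde{f} = C + u - u\circ\sigma$. Summing along any $\sigma$-periodic orbit of period $n$ telescopes the coboundary and produces the linear equation $\sum_{j=0}^{n-1}(t\tilde{\psi} - q\tilde{f})(\sigma^j\omega) = nC$ in the three unknowns $(t,q,C)$. The plan is to exhibit three periodic orbits whose equations admit only the trivial solution $(t,q,C) = (0,0,0)$; since $(0,0) \notin D$, this would contradict $(t,q) \in D$.

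The natural candidates are the fixed points $\overline{1}$, $\overline{2}$ and the period-$2$ orbit $\overline{12}$. Their $\pi$-images are found by solving the defining quadratics: $\pi(\overline{1}) = (\sqrt{5}-1)/2$, $\pi(\overline{2}) = \sqrt{2}-1$, $\pi(\overline{12}) = \sqrt{3}-1$, and $\pi(\overline{21}) = (\sqrt{3}-1)/2$. Substituting yields the three equations $C = -2t\log\phi$, $C = -2t\log(1+\sqrt{2}) + q\log 2$, and $C = -t\log(2+\sqrt{3}) + \tfrac{1}{2}q\log 2$, where $\phi = (1+\sqrt{5})/2$. Eliminating $C$ and equating the two resulting expressions for $q\log 2$ forces, when $t \neq 0$, the identity $\phi(1+\sqrt{2}) = 2+\sqrt{3}$. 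The numerical check $\phi(1+\sqrt{2}) \approx 3.906 \neq 3.732 \approx 2+\sqrt{3}$ then forces $t = 0$, after which $q = 0$ follows from any of the remaining equations.

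The only real difficulty is confirming that three concrete orbits produce three linearly independent constraints on $(t,q,C)$, and this boils down to checking the single numerical inequality $\phi(1+\sqrt{2}) \neq 2+\sqrt{3}$. Had this coincidence somehow held, the orbit $\overline{3}$ (or any other periodic word involving a new partial quotient) would easily supply a replacement equation, so no substantive obstacle arises.
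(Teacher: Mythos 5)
Your proof is correct and follows essentially the same two-step strategy as the paper: (i) the integral identities reduce to the pointwise identities $\tilde\psi(\omega) = -\log|T'(\pi(\omega))|$ and $-\tilde f(\omega) = \log a_1(\pi(\omega))$, and (ii) the non-cohomology claim is obtained from periodic-orbit constraints (the paper uses the three fixed points $\overline{1},\overline{2},\overline{3}$, while you use $\overline{1},\overline{2},\overline{12}$, but both amount to the same Livsic-type argument). Your computation is actually more carefully executed than the paper's, which contains arithmetic typos (writing $\sqrt{5}$ where $\sqrt{8}$ and $\sqrt{13}$ should appear) and states the reductio hypothesis with the word ``not'' in the wrong place.
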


\begin{proof} (i).\ Assertion (\ref{twoequ}) is just a consequence of the
facts
 $$-\log|T'(\pi(\omega))|=\tilde{\psi}(\omega), \qquad
\log a_1(\pi(\omega))=-\tilde{f}(\omega)  \quad \forall \omega\in
I^{\infty}.$$

 Suppose $t\tilde{\psi}-q\tilde{f} $ was not cohomologous to a
constant. Then there would be a
 bounded function $g $ such that $t\tilde{\psi}-q\tilde{f}=g- g\circ T + C
 $, which implies
 $$
   \lim_{n\to\infty} \frac{1}{n} \sum_{j=0}^{n-1}(t\tilde{\psi}-q\tilde{f})
   (\sigma^j\omega)
   =\lim_{n\to\infty} \frac{g- g\circ \sigma^n }{n} +C
   =C
 $$
 for all $\omega \in I^{\infty}$. On the other hand, if we take $\omega_1=[1,1,\cdots,]$,
 $\omega_2=[2,2,\cdots]$ and $\omega_3=[3,3,\cdots]$, we have
$$
   \lim_{n\to\infty} \frac{1}{n} \sum_{j=0}^{n-1} (t\tilde{\psi}-q\tilde{f})(\sigma^j\omega_i)
   =C_i,
 $$ where $$
 C_1=2t\log (\frac{\sqrt{5}-1}{2}),\ \  C_2=2t\log (\frac{\sqrt{5}-2}{2})+q\log
 2, \ \ C_3=2t\log (\frac{\sqrt{5}-3}{2})+q\log 3.$$
 Thus we get a contradiction.\end{proof}

 By Theorem \ref{potential-analytic} and
the proof of Lemma \ref{l4.5}, we know that $D= \{(t, q): 2t -q
>1\}$ is the analytic area of the pressure $P(t,q)$. Applying Lemma \ref{l4.7} and Theorem
\ref{potential-analytic}, we get more:
\begin{pro}\label{potential}
On $D= \{(t, q): 2t -q >1\}$,\\
\indent {\rm (1)} $P(t,q)$ is analytic, strictly convex.  \\
\indent {\rm (2)} $P(t,q)$ is strictly decreasing and strictly
convex with respect to $t$. In other words, $\frac{\partial
P}{\partial t}(t,q) < 0$
and $\frac{\partial^2 P}{\partial t^2}(t,q) > 0$. Furthermore,\\
\begin{eqnarray}\label{partial_t}
   \frac{\partial P}{\partial t} (t,q) = -\int \log|T'(x)| d \mu_{t,q}.
\end{eqnarray}
\indent {\rm (3)} $P(t,q)$ is strictly increasing and strictly
convex with respect to $q$. In other words, $\frac{\partial
P}{\partial q}(t,q)
> 0$ and $\frac{\partial^2 P}{\partial q^2}(t,q) > 0$. Furthermore,
\begin{eqnarray}\label{partial_q}
  \frac{\partial P}{\partial q} (t,q) = \int \log a_1(x) d \mu_{t,q}.
\end{eqnarray}
\indent {\rm (4)}
\begin{eqnarray*}
H(t,q):= \left(
  \begin{array}{cccc}
    \frac{\partial^2 P}{\partial t^2} & \frac{\partial^2 P}{\partial t \partial q} \\
    \quad &\quad \\
    \frac{\partial^2 P}{\partial t \partial q} & \frac{\partial^2 P}{\partial q^2} \\
  \end{array}
\right)
\end{eqnarray*}
is positive definite.
\end{pro}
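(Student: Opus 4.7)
The plan is to deduce the entire proposition as a direct application of Theorem \ref{potential-analytic} to the continued-fraction IFS, with all of the preparatory hypotheses already supplied by Lemmas \ref{l4.3}, \ref{l4.5}, \ref{l4.7} and Proposition \ref{measure_potential}. I would first recast $P(t,q)$ in the form required by the theorem. Since the theorem is stated for potentials of the form $t\Psi + qF$ whereas our pressure uses $t\Psi - qF$, I set $q':=-q$ and view $P(t,q)$ as $P(t\Psi + q'F)$ in the variables $(t,q')$; the finiteness hypothesis on the positivity sets is trivial, since $\sup_x \psi^{(i)}(x) = -2\log i \le 0$ and $\sup_x f^{(i)}(x) = -\log i \le 0$ give empty sets. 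Strongness on $D$ and the series expression for the pressure are provided by Lemma \ref{l4.5}, conformality and regularity by Lemma \ref{l4.3}, and $\mathrm{Int}(D) = D$ since $D$ is an open half-plane. Theorem \ref{potential-analytic} thus immediately delivers the analyticity claimed in item (1).

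To invoke the derivative formulas I must verify the integrability $\int(|\tilde\psi| + |\tilde f|)\,d\tilde\mu_{t,q} < \infty$, which is the only genuinely technical step. The one-cylinder Gibbs bound of Proposition \ref{measure_potential} reads
\[
\tilde\mu_{t,q}([j]) \le C \exp\bigl(\sup_{x}(t\log|\psi_j'(x)| + q\log j) - P(t,q)\bigr) \le C'\,j^{q-2t};
\]
combined with $|\tilde f(\omega)| = \log\omega_1$ and $|\tilde\psi(\omega)| \le 2\log(\omega_1+1)$ this reduces both integrals to $\sum_j j^{q-2t}\log j$, which converges precisely when $2t-q>1$. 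Theorem \ref{potential-analytic} then produces $\partial P/\partial t = \int\tilde\psi\,d\tilde\mu_{t,q}$ and $\partial P/\partial q = -\int\tilde f\,d\tilde\mu_{t,q}$, the minus sign arising from $q'=-q$. By the identifications $\tilde\psi(\omega) = -\log|T'(\pi\omega)|$ and $\tilde f(\omega) = -\log a_1(\pi\omega)$ recorded in Lemma \ref{l4.7}, these become exactly the claimed formulas \eqref{partial_t} and \eqref{partial_q}.

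The monotonicity assertions of items (2) and (3) now follow at once: $|T'(x)| = 1/x^2 > 1$ on $(0,1)$ forces $\partial P/\partial t < 0$; and $\log a_1 \ge 0$ with strict positivity on the set $\{a_1 \ge 2\}$, which has positive $\mu_{t,q}$-mass by the Gibbs property, forces $\partial P/\partial q > 0$. Finally, Lemma \ref{l4.7} supplies the non-cohomology of $t\tilde\psi - q\tilde f$ to constants on $D$, which activates the last clause of Theorem \ref{potential-analytic}: joint strict convexity of $P$ (completing item (1)) and positive-definiteness of the full Hessian $H(t,q)$ (item (4)). The positivity of the diagonal entries of $H(t,q)$ then secures $\partial^2 P/\partial t^2 > 0$ and $\partial^2 P/\partial q^2 > 0$, providing the one-variable strict convexity required by items (2) and (3). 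The main obstacle is really just the Gibbs-based integrability verification; everything else is essentially bookkeeping --- tracking the sign flip $q'=-q$ and translating symbolic averages into dynamical ones through $\pi$.
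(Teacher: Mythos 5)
Your proof is correct and follows essentially the same route as the paper: Proposition \ref{potential} is stated there without a displayed proof, simply as a consequence of Theorem \ref{potential-analytic} together with Lemmas \ref{l4.3}, \ref{l4.5} and \ref{l4.7}. You have in fact spelled out two things the paper leaves implicit --- the verification of the integrability hypothesis $\int(|\tilde\psi|+|\tilde f|)\,d\tilde\mu_{t,q}<\infty$ via the one-cylinder Gibbs bound, and the sign bookkeeping coming from working with $t\Psi-qF$ rather than $t\Psi+qF$ --- and both checks are carried out correctly.
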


\medskip

At the end of this subsection, we would like to quote some results
by D. Mayer \cite{Ma3} (see also M. Pollicott and H. Weiss
\cite{PoW}).

\begin{pro}[\cite{Ma3}]\label{p4.9}
Let $P(t):=P(t,0)$ and $\mu_t:=\mu_{t,0}$, then $P(t)$ is defined in
$(1/2,\infty)$ and we have $P(1)=0$ and $\mu_1=\mu_G$. Furthermore,
\begin{eqnarray}\label{P'(t)}
P'(t)= - \int \log |T'(x)| d\mu_t(x).
\end{eqnarray}
In particular
\begin{eqnarray}\label{P'(0)}
P'(0)= - \int \log |T'(x)| d\mu_G(x)=-\lambda_0.
\end{eqnarray}
\end{pro}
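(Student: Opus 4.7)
The plan is to derive Proposition \ref{p4.9} as a direct specialization of the two-parameter theory established in Proposition \ref{measure_potential} and Proposition \ref{potential} to the slice $q=0$.

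For the domain, Lemma \ref{l4.5} says $P(t,q)$ is defined on $D=\{(t,q):2t-q>1\}$, so setting $q=0$ yields the domain $(1/2,\infty)$ for $P(t)=P(t,0)$.

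To prove $P(1)=0$, I would exhibit the Lebesgue measure $m$ on $[0,1]$ as the conformal measure $\nu_{1,0}$. At $(t,q)=(1,0)$ the Ruelle operator reads $\mathcal{L}_{\Psi}g(x)=\sum_{i=1}^{\infty}\frac{1}{(i+x)^{2}}\,g\!\left(\frac{1}{i+x}\right)$. The change of variable $y=1/(i+x)$ on each branch gives, for every $f\in C[0,1]$,
$\int_{0}^{1}\mathcal{L}_{\Psi}f\,dx=\sum_{i=1}^{\infty}\int_{1/(i+1)}^{1/i}f(y)\,dy=\int_{0}^{1}f(y)\,dy$,
so $\mathcal{L}_{\Psi}^{\ast}m=m$. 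By the uniqueness of the conformal measure in Proposition \ref{measure_potential}, we conclude $\nu_{1,0}=m$ and $e^{P(1)}=1$, hence $P(1)=0$. Proposition \ref{measure_potential} also produces a $T$-invariant probability $\mu_{1,0}$ absolutely continuous with respect to $\nu_{1,0}=m$ with bounded Radon--Nikodym derivative; since the Gauss measure $\mu_G$ is the unique such $T$-invariant probability (classical Gauss--Kuzmin--L\'evy theory), we get $\mu_1=\mu_G$.

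The derivative formula is then immediate from Proposition \ref{potential}(2): restricting equation (\ref{partial_t}) to $q=0$ gives $P'(t)=\frac{\partial P}{\partial t}(t,0)=-\int \log|T'(x)|\,d\mu_t(x)$. Substituting $t=1$ and invoking $\mu_1=\mu_G$ yields $-\int\log|T'|\,d\mu_G=-\lambda_0$ (the printed ``$P'(0)$'' must be read as $P'(1)$, since $P$ is defined only on $(1/2,\infty)$). There is no substantial obstacle here; the whole statement is a bookkeeping exercise once one performs the one-line change-of-variable check that Lebesgue measure is fixed by $\mathcal{L}_{\Psi}^{\ast}$, and recalls the uniqueness of $\mu_G$ as the absolutely continuous invariant probability.
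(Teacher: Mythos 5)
The paper states this proposition as a quotation from Mayer \cite{Ma3} and offers no proof of its own, so a correct derivation from the paper's two-parameter machinery is a genuinely different (and arguably more self-contained) route. Your overall plan is sound: the domain is immediate from Lemma~\ref{l4.5}; the identification $\mu_1=\mu_G$ follows from the fact that $\mu_{1,0}$ is a $T$-invariant probability, absolutely continuous with bounded density relative to $\nu_{1,0}=m$, together with the uniqueness of the absolutely continuous invariant probability for the Gauss map; the derivative formula is the restriction of (\ref{partial_t}) to $q=0$; and you rightly observe that the displayed ``$P'(0)$'' is a misprint for $P'(1)$, since $P$ is only defined on $(1/2,\infty)$.

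The one genuine gap is the step $P(1)=0$. The change of variable showing $\mathcal{L}_{\Psi}^* m = m$ is correct, but you then invoke ``the uniqueness of the conformal measure in Proposition \ref{measure_potential}'' to conclude $e^{P(1)}=1$. That uniqueness is among $\Psi$-conformal probability measures, and the definition of $\Psi$-conformal has the constant $P(1,0)$ built in: a conformal $\nu$ must satisfy $\nu(\phi_\omega(A)) = \int_A |\phi_\omega'|\,e^{-nP(1,0)}\,d\nu$. The change-of-variable identity shows Lebesgue measure satisfies this with constant $0$, not a priori with $P(1,0)$; if those differ, $m$ is simply not $\Psi$-conformal and the uniqueness theorem says nothing about it. What you implicitly need is the stronger Perron--Frobenius-type fact that $\mathcal{L}_{\Psi}^*$ admits a probability eigenmeasure for only one eigenvalue, namely $e^{P(1,0)}$; that is true in this setting, but it is not among the quoted theorems. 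A quick repair using only the paper's tools: by the Gibbs property in Proposition \ref{measure_potential} together with Lemma \ref{lemma2.5} one gets $\mu_{1,0}(I_n(x))\asymp e^{-nP(1,0)}\,|I_n(x)|$ (this is exactly the paper's estimate (\ref{estimate}) at $(t,q)=(1,0)$); summing over all $n$-th order cylinders gives $1\asymp e^{-nP(1,0)}$ for every $n$, which forces $P(1,0)=0$. With that substitution the proof is complete and does not need Mayer.
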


\begin{rem}\label{partial_q(1,0)}
Since $\mu_{1,0}=\mu_1= \mu_{G} $, by (\ref{partial_q}), we have
\begin{eqnarray}\label{P'(1,0)}
\frac{\partial P}{\partial q}(1,0)= \int \log a_1(x) d\mu_G= \xi_0.
\end{eqnarray}
\end{rem}

\subsection{Further study on $P(t,q) $}
We will use the following simple known fact of convex functions.
\begin{fac}\label{convex}
Suppose $f$ is a convex continuously differentiable function on an
interval $I$. Then $f'(x)$ is increasing and
\begin{eqnarray*}
f'(x) \leq \frac{f(y)-f(x)}{y-x} \leq f'(y)   \qquad x,y\in I, x<y.
\end{eqnarray*}
\end{fac}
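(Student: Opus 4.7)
The plan is to derive both conclusions from the standard three-chord (or ``three-slope'') inequality for convex functions and then pass to limits using differentiability.

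First I would recall the convexity definition: for any $a < b$ in $I$ and any $\lambda\in[0,1]$,
\[
  f(\lambda a + (1-\lambda) b) \leq \lambda f(a) + (1-\lambda) f(b).
\]
Then, given $x < z < y$ in $I$, I would set $\lambda = (y-z)/(y-x)$ so that $z = \lambda x + (1-\lambda) y$. Plugging into the convexity inequality and rearranging algebraically should yield the three-chord inequality
\[
  \frac{f(z) - f(x)}{z - x} \;\leq\; \frac{f(y)-f(x)}{y-x} \;\leq\; \frac{f(y)-f(z)}{y-z}.
\]
This is the main structural ingredient and the step where one must be careful with signs and with the algebra; it is however purely formal once the right expression for $\lambda$ is fixed.

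Next, using that $f$ is differentiable, I would let $z \to x^{+}$ in the left inequality and $z \to y^{-}$ in the right inequality of the three-chord inequality. Continuity of $f$ plus existence of the one-sided derivatives (which coincide with $f'(x)$ and $f'(y)$ by differentiability) give
\[
  f'(x) \;\leq\; \frac{f(y)-f(x)}{y-x} \;\leq\; f'(y),
\]
which is exactly the displayed sandwich bound.

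Finally, to conclude that $f'$ is increasing, I would take arbitrary $x < y$ in $I$ and pick any auxiliary $z \in (x,y)$. Applying the sandwich bound just obtained first to the pair $(x,z)$ and then to the pair $(z,y)$ yields
\[
  f'(x) \;\leq\; \frac{f(z)-f(x)}{z-x} \;\leq\; \frac{f(y)-f(z)}{y-z} \;\leq\; f'(y),
\]
where the middle inequality is again the three-chord inequality applied at $x<z<y$. Hence $f'(x)\le f'(y)$, completing the proof. The only mildly delicate step is ensuring the three-chord inequality; the rest is a routine limit argument and has no real obstacle.
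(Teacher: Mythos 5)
Your proof is correct. The paper offers no proof of this Fact at all: it is introduced with the sentence ``We will use the following simple known fact of convex functions,'' so there is no argument in the source to compare against. The route you take --- establish the three-chord (slope) inequality from the convexity definition, then let the intermediate point tend to each endpoint and invoke differentiability --- is the standard one and is carried out correctly. One small economy worth noting: once you have the sandwich
\[
f'(x) \leq \frac{f(y)-f(x)}{y-x} \leq f'(y),
\]
the monotonicity of $f'$ already follows by just combining the two outer bounds, so the auxiliary point $z\in(x,y)$ in your final step is not actually needed.
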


First  we  give an estimation for the pressure $P(t,q)$ and show
some behaviors of $P(t,q)$ when $q$ tends to $-\infty$ and $2t-1$
($t$ being fixed).

\begin{pro}\label{p4.11}
\indent  For $(t,q)\in D$, we have
\begin{eqnarray}\label{pressure_ineq}
  -t \log 4 + \log \zeta(2t-q) \leq
P(t,q) \leq \log \zeta(2t-q).
\end{eqnarray}
Consequently,

 (1) \ $P(0,q)=\log \zeta(-q)$, and for any point $(t_0,q_0)$ on the line $2t-q=1$,
$$
\lim_{(t,q)\to (t_0,q_0)}P(t,q)=\infty;
$$

(2) for fixed $t\in \mathbb{R}$,
\begin{eqnarray}\label{part-q-infty}
\lim_{q\to 2t-1}\frac{\partial P}{\partial q}(t,q)=+\infty;
\end{eqnarray}

 (3) for fixed $t\in  \mathbb{R}$, we have
\begin{eqnarray}\label{P/q}
  \lim_{q\to -\infty} \frac{P(t,q)}{q}=0, \ \
\label{P/q-infty}
 \lim_{q\to -\infty}  \frac{\partial P}{\partial q}(t,q)=0.
\end{eqnarray}

\end{pro}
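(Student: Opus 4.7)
The plan is to first establish the two-sided estimate (\ref{pressure_ineq}) by elementary bounds on continued fraction values, and then read off (1), (2), (3) using convexity of $P$ (Proposition~\ref{potential}) together with the Gibbs property of $\mu_{t,q}$ (Proposition~\ref{measure_potential}). Concretely, starting from the formula in Lemma~\ref{l4.5}(ii), set
\[
  Z_n := \sum_{\omega\in\mathbb{N}^n}\sup_{x\in[0,1]}\prod_{j=1}^n \omega_j^q\,[\omega_j,\dots,\omega_n+x]^{2t},
\]
so that $P(t,q)=\lim_n \frac{1}{n}\log Z_n$. For $\omega_j\geq 1$ and $x\in[0,1]$ one has $(2\omega_j)^{-1}\leq (\omega_j+1)^{-1}\leq [\omega_j,\dots,\omega_n+x]\leq \omega_j^{-1}$, so (for $t\geq 0$)
\[
  4^{-nt}\prod_{j=1}^n \omega_j^{q-2t}
  \;\leq\; \prod_{j=1}^n \omega_j^q\,[\omega_j,\dots,\omega_n+x]^{2t}
  \;\leq\; \prod_{j=1}^n \omega_j^{q-2t}.
\]
Summing over $\omega\in\mathbb{N}^n$ gives $4^{-nt}\zeta(2t-q)^n\leq Z_n\leq \zeta(2t-q)^n$, and (\ref{pressure_ineq}) follows after $\frac{1}{n}\log$ and $n\to\infty$.

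For (1), setting $t=0$ pinches $P(0,q)=\log\zeta(-q)$; and as $(t,q)\to(t_0,q_0)$ on the boundary line $2t-q=1$, the quantity $\zeta(2t-q)$ blows up, so the lower bound in (\ref{pressure_ineq}) forces $P(t,q)\to+\infty$. For (2), Proposition~\ref{potential} tells us $q\mapsto P(t,q)$ is convex on $(-\infty,2t-1)$, and Fact~\ref{convex} gives
\[
  \frac{\partial P}{\partial q}(t,q)\;\geq\; \frac{P(t,q)-P(t,q_0)}{q-q_0}
  \qquad (q_0<q<2t-1);
\]
holding $q_0$ fixed and letting $q\uparrow 2t-1$, the right-hand side blows up by (1), yielding (\ref{part-q-infty}).

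For (3), the first limit $P(t,q)/q\to 0$ is immediate, since $\zeta(s)\to 1$ as $s\to\infty$ gives $\log\zeta(2t-q)\to 0$ and (\ref{pressure_ineq}) then keeps $P(t,q)$ bounded. The second limit is the delicate step: a naive convexity bound $\frac{\partial P}{\partial q}(t,q)\leq P(t,q+1)-P(t,q)$ does not close, because the $-t\log 4$ slack in (\ref{pressure_ineq}) leaves a residual $t\log 4$ that does not vanish, so bare pressure estimates are insufficient. Instead I invoke the explicit representation $\frac{\partial P}{\partial q}(t,q)=\int \log a_1\,d\mu_{t,q}$ from Proposition~\ref{potential}(3). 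The Gibbs inequality of Proposition~\ref{measure_potential}, applied to one-cylinders $[k]$, reads $\tilde\mu_{t,q}([k])\asymp k^{q-2t}e^{-P(t,q)}$ (the constant depending on $t$ but uniform in $k$); summing over $k$ and using total mass one pins down $e^{-P(t,q)}\asymp 1/\zeta(2t-q)$, so
\[
  \int \log a_1\,d\mu_{t,q} \;\asymp\; \frac{1}{\zeta(2t-q)}\sum_{k\geq 2} k^{q-2t}\log k.
\]
As $q\to-\infty$ the denominator tends to $1$ while, bounding $\log k\leq k^{1/2}$, the numerator is dominated by $\zeta(2t-q-\frac{1}{2})-1\to 0$, giving the desired conclusion.
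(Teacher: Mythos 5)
Your derivation of (\ref{pressure_ineq}), and your deductions of (1), (2), and the first limit in (3), match the paper's argument step for step. The caveat $t\geq 0$ you note for the lower bound is correct (and implicit in the paper's own derivation, since for $t<0$ the stated two-sided bound would have its left member exceed its right), but the only thing you need from it for (3) --- boundedness of $P(t,q)$ as $q\to-\infty$ --- survives in either sign range.

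Where you depart from the paper is the second limit in (3), and your diagnosis there is mistaken. You argue that ``bare pressure estimates are insufficient'' because the one-step difference $P(t,q+1)-P(t,q)$ leaves a stubborn residual $t\log 4$. That particular bound indeed fails, but a different convexity argument --- the one the paper actually uses --- closes cleanly: fix $q_0<2t-1$ and apply Fact~\ref{convex} at the pair $q<q_0$, getting
\[
 \frac{\partial P}{\partial q}(t,q)\;\leq\;\frac{P(t,q_0)-P(t,q)}{q_0-q}.
\]
As $q\to-\infty$ the numerator stays bounded --- this is exactly the content of (\ref{pressure_ineq}), which confines $P(t,q)$ between $-t\log 4+\log\zeta(2t-q)$ and $\log\zeta(2t-q)$ with $\zeta(2t-q)\to 1$ --- while the denominator $q_0-q\to+\infty$, so the quotient tends to $0$. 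Since $\frac{\partial P}{\partial q}>0$ by Proposition~\ref{potential}(3), the limit is exactly $0$. The $t\log 4$ slack that blocks the step-one estimate is absorbed by the unbounded denominator; that is the point your proposal misses.

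Your substitute route via the Gibbs property of $\tilde\mu_{t,q}$ is workable in spirit but has an unaddressed gap: the Gibbs constant $C$ in Proposition~\ref{measure_potential} depends on $(t,q)$, and you need it to stay bounded as $q\to-\infty$ for the asymptotics $\int\log a_1\,d\mu_{t,q}\asymp\zeta(2t-q)^{-1}\sum_k k^{q-2t}\log k$ to be meaningful --- the uniformity ``in $k$'' you parenthetically note is not the relevant one. This can be salvaged (the H\"older variation of $t\Psi-qF$ equals $t$ times that of $\Psi$ since $F$ is constant on each cylinder, hence the distortion and the Gibbs constant are $q$-independent for fixed $t$), but you must say so explicitly; and having done so, you have run a substantially longer argument than the paper's one-line convexity estimate, motivated by a difficulty that isn't actually there.
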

\begin{proof} Notice that
 $
  \frac{1}{\omega_j+1} \leq [\omega_j,\cdots, \omega_n+x] \leq
  \frac{1}{\omega_j}.
$ for  $x\in [0,1)$ and $1\leq j\leq n$. Thus we
have\begin{eqnarray*} \frac{1}{4^{nt}}
\sum_{\omega=1}^{\infty}(\omega^{q-2t})^n\leq
\sum_{\omega_1,\cdots,\omega_n}\prod_{j=1}^n\omega_j^q[\omega_j,\cdots,\omega_n+x]^{2t}\leq
\sum_{\omega=1}^{\infty}(\omega^{q-2t})^n.
\end{eqnarray*}
Hence by Lemma \ref{l4.5} (ii), we get (\ref{pressure_ineq}).

We get (1) immediately from (\ref{pressure_ineq}).

Look at (2). For all $q> q_0$, by the convexity of $P(t,q) $ and
Fact \ref{convex}, we have
\begin{eqnarray*}
\frac{\partial P}{\partial q}(t,q) \geq \frac{P(t, q)-
P(t,q_0)}{q-q_0}.
\end{eqnarray*}
Thus
\begin{eqnarray*}
\lim_{q\to 2t-1} \frac{\partial P}{\partial q}(t,q) \geq \lim_{q\to
2t-1}\frac{P(t, q_0)- P(t,q)}{q_0-q}=\infty.
\end{eqnarray*}
Here we use the fact that $\lim\limits_{q\to 2t-1} P(t,q)= +\infty
$. Hence we get (\ref{pressure_ineq}).

 In order to show (3), we consider
$P(t,q)/q $ as function of $q$ on $(-\infty,2t-1)\setminus \{0\}$.
Noticed that for fixed $t\in \mathbb{R}$, $\lim_{q\to -\infty}
\zeta(2t-q)=1$. Thus
\[
  \lim_{q\to -\infty} \frac{\log \zeta(2t-q)}{q}=0.
\]
Then the first formula in (\ref{P/q}) is followed from
(\ref{pressure_ineq}).

Fix $q_0< 2t-1$. Then for all $q< q_0$, by the convexity of $P(t,q)
$ and Fact \ref{convex}, we have
 \begin{eqnarray*}
\frac{\partial P}{\partial q}(t,q) \leq \frac{P(t, q_0)-
P(t,q)}{q_0-q}.
\end{eqnarray*}
Thus
\begin{eqnarray*}
\lim_{q\to -\infty} \frac{\partial P}{\partial q}(t,q) \leq
\lim_{q\to -\infty}\frac{P(t, q_0)- P(t,q)}{q_0-q}=0.
\end{eqnarray*}
Hence by Proposition \ref{potential} (3), we get the second formula
in (\ref{P/q-infty}). \end{proof}

\subsection{Properties of $(t(\xi), q(\xi))$}

 Recall that $\xi_0= \int \log a_1(x) \mu_G $ and $D_0:=
\{(t,q): 2t-q>1, 0\leq t \leq 1 \} $.

\begin{pro}\label{solution}
 For any $\xi\in (0, \infty)$, the system
\begin{eqnarray}\label{equations}
\left\{
  \begin{array}{ll}
     P(t,q)=q\xi, \\
     \displaystyle \frac{\partial P}{\partial q}(t,q)= \xi
  \end{array}
\right.
\end{eqnarray}
admits a unique solution $(t(\xi),q(\xi))\in D_0$. For $\xi=\xi_0 $,
the solution is $(t(\xi_0),q(\xi_0))=(1,0) $. The function $t(\xi)$
and $q(\xi)$ are analytic.

\end{pro}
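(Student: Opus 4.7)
\medskip
\noindent\textbf{Proof proposal.}
The plan is to reduce the two-equation system to a one-variable extremization problem along the level curve $\{P(t,q)=q\xi\}$. For fixed $\xi>0$, I will parametrize this curve by $q$ and write $t=T_\xi(q)$. Since $P(\cdot,q)$ is strictly decreasing and analytic in $t$ (Proposition \ref{potential}), with $P(t,q)\to +\infty$ as $t\to((q+1)/2)^+$ (Proposition \ref{p4.11}(1)) and $P(t,q)\to -\infty$ as $t\to\infty$ (which I would justify via the Ruelle inequality $h_\mu\le\lambda_\mu$ for the Gauss system together with the variational formulation, giving $P(t,q)\le(1-t)\gamma_0+q\sup_\mu\int\log a_1\,d\mu$ on the relevant range of $q$), the intermediate value theorem yields a unique solution $T_\xi(q)$ to $P(T_\xi(q),q)=q\xi$. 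The implicit function theorem then makes $T_\xi$ real-analytic on its domain $I_\xi\subset\mathbb{R}$.

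Next, implicit differentiation of $P(T_\xi(q),q)=q\xi$ gives
\[
T_\xi'(q)=\frac{\xi-\partial_q P(T_\xi(q),q)}{\partial_t P(T_\xi(q),q)},
\]
so the system (\ref{equations}) is equivalent to $T_\xi'(q)=0$. Differentiating a second time and evaluating at any critical point yields
\[
T_\xi''(q)=-\frac{\partial_{qq}P(T_\xi(q),q)}{\partial_t P(T_\xi(q),q)}>0,
\]
using $\partial_{qq}P>0$ and $\partial_t P<0$ from Proposition \ref{potential}. Hence every critical point of $T_\xi$ on the connected interval $I_\xi$ is a strict local minimum, so there is at most one. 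Existence of a critical point follows by showing $T_\xi(q)\to+\infty$ at both endpoints of $I_\xi$: as $q\to-\infty$ one has $q\xi\to-\infty$, and Proposition \ref{p4.11} together with the fact that $P(t,q)\le \log\zeta(2t-q)\to 0$ when $t$ is bounded forces $T_\xi(q)\to+\infty$; as $q$ approaches the right endpoint of $I_\xi$ one has $q\xi\to+\infty$, which by Proposition \ref{p4.11}(1) pushes $(T_\xi(q),q)$ toward the boundary $2t-q=1$, so again $T_\xi(q)\to+\infty$. Therefore $T_\xi$ attains its minimum at a unique point $q(\xi)\in I_\xi$, and we set $t(\xi):=T_\xi(q(\xi))$.

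For the case $\xi=\xi_0$, direct substitution shows $(1,0)$ solves (\ref{equations}): the first equation becomes $P(1,0)=0$, which is Proposition \ref{p4.9}, and the second becomes $\partial_q P(1,0)=\xi_0$, which is Remark \ref{partial_q(1,0)}; uniqueness then gives $(t(\xi_0),q(\xi_0))=(1,0)$. The condition $(t(\xi),q(\xi))\in D_0$ reduces to $0\le t(\xi)\le 1$, which I would obtain by tracking monotonicity of $t(\xi)$ along $\xi$ away from $\xi_0$ (using the implicit relations below). Finally, analyticity of $(t(\xi),q(\xi))$ follows by applying the implicit function theorem to the map
\[
\Phi(t,q,\xi):=\bigl(P(t,q)-q\xi,\ \partial_q P(t,q)-\xi\bigr);
\]
at a solution, $\partial_q P=\xi$, so the Jacobian in $(t,q)$ is
\[
\det\begin{pmatrix}\partial_t P & \partial_q P-\xi\\ \partial_{tq}P & \partial_{qq}P\end{pmatrix}
=\det\begin{pmatrix}\partial_t P & 0\\ \partial_{tq}P & \partial_{qq}P\end{pmatrix}
=\partial_t P\cdot\partial_{qq}P<0,
\]
which is non-degenerate, yielding the desired analyticity. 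The main obstacle I expect is the boundary analysis: verifying cleanly that $P(t,q)\to-\infty$ as $t\to\infty$ on the full relevant range of $q$, and that $I_\xi$ is the whole interval I need so that $T_\xi$ really blows up at both ends; once this is pinned down, the remaining convexity and IFT arguments are routine.
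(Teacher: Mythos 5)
Your approach is genuinely different from the paper's and, in a sense, dual to it: the paper fixes $t$, uses (\ref{part-q-infty}) and (\ref{P/q}) to solve $\partial_q P(t,q)=\xi$ for a unique $q(t,\xi)$, and then proves $W(t):=P(t,q(t,\xi))-\xi q(t,\xi)$ is strictly decreasing with $W(0)>0\ge W(1)$, so the zero $t(\xi)$ lands in $(0,1]$ automatically. You instead solve the first equation for $t=T_\xi(q)$ and look for the unique critical point of $T_\xi$. Your convexity step is correct (at any critical point $T_\xi''=-\partial_{qq}P/\partial_t P>0$, so every critical point is a strict local minimum), but two of your boundary claims fail as written, not merely lack detail. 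First, the inequality $P(t,q)\le(1-t)\gamma_0+q\sup_\mu\int\log a_1\,d\mu$ is vacuous for $q>0$ because $\sup_\mu\int\log a_1\,d\mu=+\infty$ (Dirac masses on the periodic points $[k,k,\dots]$ give $\int\log a_1\,d\mu=\log k$). To show $T_\xi(q)$ exists for all $q$ you should split cases: for $q\le 0$ drop the nonpositive term $q\int\log a_1$ to get $P(t,q)\le(1-t)\gamma_0\to-\infty$; for $q>0$ nothing so strong is needed, since the upper bound in (\ref{pressure_ineq}) already gives $\limsup_{t\to\infty}P(t,q)\le 0<q\xi$. Second, your argument that $T_\xi(q)\to+\infty$ as $q\to-\infty$ uses the wrong half of (\ref{pressure_ineq}): from $q\xi=P(T_\xi(q),q)\le\log\zeta(2T_\xi(q)-q)\to 0$ there is no contradiction with $T_\xi$ bounded, because $q\xi\to-\infty$ is perfectly consistent with being bounded above by something tending to $0$. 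The estimate you need is the \emph{lower} bound $P(t,q)>-t\log 4$, which gives $T_\xi(q)>-q\xi/\log 4\to+\infty$ directly.

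Finally, you defer the verification that $(t(\xi),q(\xi))\in D_0$ to "tracking monotonicity along $\xi$," but within your own set-up the check is immediate and no monotonicity is needed: $T_\xi(0)=1$ because $P(1,0)=0$ (Proposition \ref{p4.9}), so the global minimum value satisfies $t(\xi)=T_\xi(q(\xi))\le T_\xi(0)=1$; and $t(\xi)>0$ because $P(0,q)=\log\zeta(-q)>0>q\xi$ forces $T_\xi(q)>0$ when $q<-1$, while for $q\ge -1$ the domain constraint $T_\xi(q)>(q+1)/2\ge 0$ already suffices. With these repairs your route is sound; the trade-off is that the paper's choice to fix $t$ and work on $t\in[0,1]$ avoids the entire $t\to\infty$ boundary analysis your parametrization requires, which is why the paper's version reads more cleanly.
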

\begin{proof}  $Existence \ and \ uniqueness \ of \ solution$ $(t(\xi),
q(\xi))$. \ \ \ Recall that $P(1,0)=0 $ and $P(0,q)= \log \zeta(-q)
$ (Proposition \ref{p4.11}).

We start with a geometric argument which will followed by a rigorous
proof. Consider $P(t,q) $ as a family of function of $q$ with
parameter $t$. It can be seen from the graph (see Figure 3) that for
any $\xi
> 0 $, there exists a unique $t\in (0,1] $, such that the line $\xi
q $ is tangent to $P(t,\cdot) $. This $t=t(\xi) $ can be described
as the unique point such that
\begin{eqnarray}\label{describ-t}
\inf _{q<2t(\xi)-1} \Big(P(t(\xi),q)-q \xi\Big) =0.
\end{eqnarray}
We denote by $q(\xi) $ the point where the infimum in
(\ref{describ-t}) is attained. Then the tangent point is
$(q(\xi),P(t(\xi),q(\xi)))$ and the derivative of $P(t(\xi),q)-q \xi
$ (with respect to $q$) at $q(\xi)$ equals $0$, i.e.,
\begin{eqnarray*}
 \Big(P(t(\xi),q)-q \xi \Big)'\arrowvert_{q(\xi)} = 0.
\end{eqnarray*}
Thus we have $\frac{\partial P}{\partial q}(t(\xi),q(\xi))= \xi $.
By (\ref{describ-t}), we also have $P(t(\xi),q(\xi))-q(\xi) \xi=0 $.
Therefore $(t(\xi),q(\xi)) $ is a solution of (\ref{equations}). The
uniqueness of $q(\xi)$ follows by the fact that $\frac{\partial
P}{\partial q}$ is monotonic with respect to $q$ (Proposition
\ref{potential}).
\begin{center}
\begin{pspicture}(0,0)(10,7)
\rput(5,3.3){\includegraphics{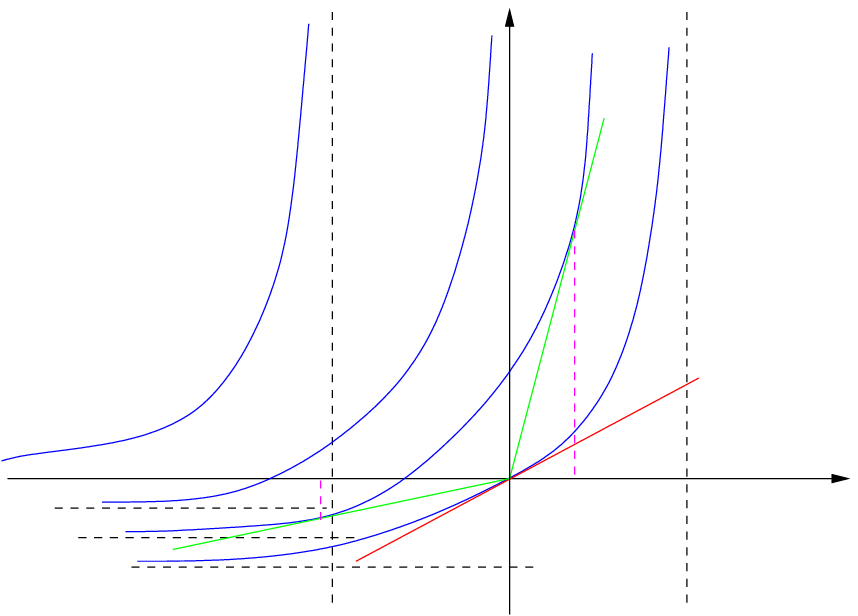}} \rput(9.15,1.35){$q$}
\rput(5.82,6.65){$P(t,q)$} \rput(8,5.6){$t=1$}
\rput(6.68,6.03){$t=t(\xi)$} \rput(5.1,5.7){$t=\frac{1}{2}$}
\rput(3.1,4.6){$t=0$} \rput(6.5,1.35){$q(\xi)$}
\rput(7.9,2.65){$\xi_0$} \rput(5.7,1.76){$0$} \rput(6.9,5.3){$\xi$}
\end{pspicture}
\begin{center}
{{\it Figure 3.}  Solution of (\ref{equations})}
\end{center}
\end{center}

Let us give a rigorous proof. By (\ref{part-q-infty}), (\ref{P/q})
and the mean-value theorem, for fixed $t\in \mathbb{R}$ and any
$\xi>0$, there exists a $q(t,\xi)\in (-\infty, 2t-1)$ such that
\begin{eqnarray}\label{2equation}
   \frac{\partial P}{\partial q}\big(t,q(t,\xi)\big)=\xi.
\end{eqnarray}
The monotonicity of $\frac{\partial P}{\partial q}$ with respect to
$q$ implies the uniqueness of $q(t,\xi)$ (Proposition
\ref{potential}).

Since $P(t,q)$ is analytic, the implicit $q(t,\xi) $ is analytic
with respect to $t $ and $\xi$. Fix $\xi$ and set $$W(t):=
P\big(t,q(t,\xi)\big)- \xi q(t,\xi).$$ Since
\begin{eqnarray*}
  W'(t) &=& \frac{\partial P}{\partial t}\big(t,q(t,\xi)\big)+\frac{\partial P}{\partial q}
  \big(t,q(t,\xi)\big)
  \frac{\partial q}{\partial t}(t,\xi) -\xi \frac{\partial q}{\partial t}(t,\xi)\\&
  =&
   \frac{\partial P}{\partial t}\big(t,q(t,\xi)\big)  \ \ \   \ \ \ \ \ ({\rm{by}}
   (\ref{2equation}))\\
   &<&0 \ \ \   \ \ \ \ \ \ \ \ \ \ \ \ \ \ ({\rm{by \ Proposition \ }} \ref{potential} (2)).
\end{eqnarray*}
Thus $W(t)$ is strictly decreasing.

Since $P(0,q)= \log \zeta(-q) >0$ ($q<-1$),  for $\xi>0 $ we have
\[W(0)=P\big(0, q(0, \xi)\big)- \xi q(0,\xi)
>0.\]
Since $P(1,q)$ is convex and $P(1,0)=0$, by Fact \ref{convex} we
have
\begin{eqnarray*}
\frac{P\big(1,q(1,\xi)\big)-0}{q(1,\xi)-0}\leq \frac{\partial
P}{\partial q}\big(1,q(1,\xi)\big) = \xi, \qquad {\rm if } \ \
q(1,\xi)>0,
\end{eqnarray*}
and
\begin{eqnarray*}
\frac{0-P\big(1,q(1,\xi)\big)}{0-q(1,\xi)}\geq \frac{\partial
P}{\partial q}\big(1,q(1,\xi)\big) = \xi, \qquad {\rm if } \ \
q(1,\xi)<0.
\end{eqnarray*}
If $q(1,\xi)=0$, we have in fact $\xi=\xi_0$ and
$P\big(1,q(1,\xi)\big)=0 $. Hence, in any case we have
\begin{eqnarray}\label{P(1,q)}
P\big(1,q(1,\xi)\big)- \xi q(1,\xi) \leq 0.
\end{eqnarray}
Therefore, $W(1)=P\big(1,q(1,\xi)\big)-\xi q(1,\xi)\leq 0 $.

Thus by the mean-value theorem and the monotonicity of $W(t)$, there
exists a unique $t=t(\xi)\in (0,1] $ such that $W(t(\xi))= 0 $,
{\it{i.e.}}
\begin{eqnarray}\label{f4.27}
 P\Big(t(\xi),q\big(t(\xi),\xi\big)\Big)=\xi q\big(t(\xi),\xi\big).
\end{eqnarray}
If we write $q\big(t(\xi),\xi\big) $ as $q(\xi)$, both
(\ref{2equation}) and (\ref{f4.27}) show that
$\big(t(\xi),q(\xi)\big) $ is the unique solution of
(\ref{equations}). For $\xi=\xi_0$, the assertion in Proposition
\ref{p4.9} that $P(0,1)=0=0\cdot \xi_0$ and the assertion of Remark
\ref{partial_q(1,0)} that $\frac{\partial P}{\partial q}(1,0)=\xi_0$
imply that $(0,1)$ is a solution of (\ref{equations}). Then the
uniqueness of the solution to (\ref{equations}) implies
$\big(t(\xi_0), q(\xi_0)\big)=(0,1)$.

$Analyticity \ of \ \big(t(\xi), q(\xi)\big)$. \ \ \ Consider the
map
\begin{eqnarray*}
F=\left(
    \begin{array}{c}
      F_1 \\
      F_2 \\
    \end{array}
  \right)=\left(
            \begin{array}{c}
              P(t,q)-q\xi \\
              \frac{\partial P}{\partial q}(t,q)-\xi \\
            \end{array}
          \right).
\end{eqnarray*}
Then the jacobian of $F$ is equal to \begin{eqnarray*} J(F)=:\left(
                 \begin{array}{cc}
                   \frac{\partial F_1}{\partial t} & \frac{\partial F_1}{\partial q} \\
                     \frac{\partial F_2}{\partial t} & \frac{\partial F_1}{\partial q} \\
                      \end{array}
                      \right)=\left(
                      \begin{array}{cc}
                       \frac{\partial P}{\partial t} & \frac{\partial P}{\partial q}-\xi \\
                        \frac{\partial^2 P}{\partial t \partial q} & \frac{\partial^2 P}{\partial q^2} \\
                        \end{array}
                        \right).
\end{eqnarray*}
Consequently,\begin{eqnarray*} \det (J(F))|_{t=t(\xi),
q=q(\xi)}=\frac{\partial P}{\partial t}\cdot \frac{\partial^2
P}{\partial q^2}\neq 0.
\end{eqnarray*}
Thus by the implicit function theorem, $t(\xi) $ and $q(\xi) $ are
analytic.\end{proof}

\smallskip
Now let us present some  properties on $t(\xi)$. Recall that
$\xi_0=\frac{\partial P}{\partial q}(1,0) $.

\begin{pro}\label{sign-q-xi}
   $q(\xi) < 0$ for $\xi< \xi_0$; $q(\xi_0) = 0$;  $ q(\xi) > 0$ for $\xi> \xi_0$.
\end{pro}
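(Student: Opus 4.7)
The statement $q(\xi_0)=0$ is already part of Proposition~\ref{solution}, so the task is to determine the sign of $q(\xi)$ for $\xi\neq\xi_0$. My plan has three steps: (a) show the zeros of $q(\cdot)$ are exactly $\{\xi_0\}$; (b) compute $q'(\xi_0)$ and show it is positive; (c) conclude by continuity.

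\textbf{Step 1: $q(\xi)=0 \iff \xi=\xi_0$.} Suppose $q(\xi^*)=0$ for some $\xi^*\in(0,\infty)$. The first defining equation $P(t(\xi^*),q(\xi^*))=q(\xi^*)\,\xi^*$ collapses to $P(t(\xi^*),0)=0$. By Proposition~\ref{potential}(2) the function $t\mapsto P(t,0)$ is strictly decreasing on its domain, and by Proposition~\ref{p4.9} we have $P(1,0)=0$; hence $t(\xi^*)=1$. The second defining equation then gives
\[
\xi^*=\frac{\partial P}{\partial q}(1,0)=\xi_0,
\]
the last identity being Remark~\ref{partial_q(1,0)}. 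Thus $q$ vanishes only at $\xi_0$.

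\textbf{Step 2: Sign of $q'(\xi_0)$.} Since $t(\xi)$ and $q(\xi)$ are analytic (Proposition~\ref{solution}), differentiating $P(t(\xi),q(\xi))=q(\xi)\xi$ in $\xi$ and using $\frac{\partial P}{\partial q}(t(\xi),q(\xi))=\xi$ yields the clean relation
\[
\frac{\partial P}{\partial t}(t(\xi),q(\xi))\,t'(\xi)=q(\xi).
\]
At $\xi=\xi_0$ the right side is zero while $\frac{\partial P}{\partial t}(1,0)<0$ by Proposition~\ref{potential}(2), forcing $t'(\xi_0)=0$. Differentiating the second defining equation $\frac{\partial P}{\partial q}(t(\xi),q(\xi))=\xi$ gives
\[
\frac{\partial^{2}P}{\partial t\partial q}\,t'(\xi)+\frac{\partial^{2}P}{\partial q^{2}}\,q'(\xi)=1,
\]
and evaluating at $\xi_0$ reduces it to $\frac{\partial^{2}P}{\partial q^{2}}(1,0)\,q'(\xi_0)=1$. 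Since $\frac{\partial^{2}P}{\partial q^{2}}(1,0)>0$ by Proposition~\ref{potential}(3), we obtain $q'(\xi_0)>0$.

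\textbf{Step 3: Globalization.} By Step 1, $q$ has no zero on $(0,\xi_0)$ nor on $(\xi_0,\infty)$, so by continuity of $q$ (Proposition~\ref{solution}) its sign is constant on each of these intervals. The local sign is dictated by Step 2: $q'(\xi_0)>0$ means $q(\xi)>0$ immediately to the right of $\xi_0$ and $q(\xi)<0$ immediately to the left. Combining constancy with the local information yields $q(\xi)<0$ on $(0,\xi_0)$ and $q(\xi)>0$ on $(\xi_0,\infty)$, which is the claim.

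The only place any real work is done is Step 1, and even that is short: the crucial observation is that $P(t,0)$ is strictly monotone in $t$ with its unique zero at $t=1$, which pins down the zero set of $q(\cdot)$ completely. Once that rigidity is in hand, the rest is a routine implicit differentiation plus an intermediate value argument.
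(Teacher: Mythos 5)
Your proof is correct, and it takes a genuinely different route from the paper's. The paper argues directly: it uses the convexity of $q \mapsto P(1,q)$ together with Fact~\ref{convex} to establish the supporting-line inequality $P(1,q) \geq \xi_0 q$ for all $q < 1$, then combines this with the strict $t$-monotonicity $P(t,q) > P(1,q)$ for $t \in (0,1)$ to rule out the wrong sign of $q(\xi)$: for $\xi > \xi_0$ and any $q \le 0$ one has $P(t,q) > q\xi$, so the equation $P = q\xi$ cannot be satisfied there, forcing $q(\xi) > 0$ (symmetrically for $\xi < \xi_0$). Your argument is instead a "zero set plus local derivative plus intermediate value" strategy: you pin down $\{q=0\}=\{\xi_0\}$ using the rigidity that $P(\cdot,0)$ is strictly decreasing with unique zero at $t=1$, compute $q'(\xi_0) = 1/\frac{\partial^2 P}{\partial q^2}(1,0) > 0$ by implicit differentiation, and conclude by constancy of sign on each side of $\xi_0$. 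The paper's route needs only first-order convexity information (supporting lines), while yours invokes the strict positivity of $\frac{\partial^2 P}{\partial q^2}$ (from Proposition~\ref{potential}(3)) and the smoothness of $q(\xi)$ from Proposition~\ref{solution}; both ingredients are available in the paper, so this is a legitimate alternative. A minor observation: when you deduce $t(\xi^*)=1$ from $P(t(\xi^*),0)=0$, you are implicitly using that $q(\xi^*)=0$ forces $t(\xi^*) > 1/2$ (so that $(t(\xi^*),0)\in D$); this is automatic from the constraint $2t-q>1$, but it is worth stating since $D_0$ nominally allows $t$ down to $0$.
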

\begin{proof} Since $P(1,q)$ is convex and $P(1,0)=0$, by Fact \ref{convex},
we have
\begin{eqnarray*}
\frac{P(1,q)-0}{q-0}\geq \frac{\partial P}{\partial q}(1,0) = \xi_0,
\ \ (q>0); \qquad \frac{0-P(1,q)}{0-q}\leq \frac{\partial
P}{\partial q}(1,0) = \xi_0, \ \ (q<0).
\end{eqnarray*}
Hence for all $q<1$,
\begin{eqnarray}\label{P(1,q)q}
P(1,q) \geq \xi_0 q.
\end{eqnarray}

 We recall that $(t(\xi_0), q(\xi_0))= (1,0) $ is the
unique solution of the system (\ref{equations}) for $\xi =\xi_0 $.
By the above discussion of the existence of $t(\xi)$,
 $t(\xi)=1$ if and only if $\xi=\xi_0 $. Now we suppose $t\in (0,1) $.
For $\xi>\xi_0 $, using (\ref{P(1,q)q}), we have
\begin{eqnarray*}
   P(t,q)> P(1,q) \geq q \xi_0 \geq q \xi \qquad (\forall q\leq 0).
\end{eqnarray*}
Thus $q(\xi)> 0 $.  For $\xi<\xi_0 $, using (\ref{P(1,q)q}), we have
\begin{eqnarray*}
   P(t,q)> P(1,q) \geq q \xi_0 \geq q \xi \qquad (\forall q\geq 0).
\end{eqnarray*}
Thus $q(\xi)< 0 $. \end{proof}

\begin{pro}\label{sign-t'}
For $\xi \in (0, +\infty) $, we have
\begin{eqnarray}\label{t'}
    t'(\xi)= \frac{q(\xi)}{\frac{\partial P}{\partial t}(t(\xi),q(\xi))}.
\end{eqnarray}
\end{pro}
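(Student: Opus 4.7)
The plan is to obtain the formula by implicit differentiation of the defining relations in the system (\ref{equations}). By Proposition \ref{solution} we already know that both $t(\xi)$ and $q(\xi)$ are analytic functions of $\xi$, so differentiating with respect to $\xi$ is legitimate and produces a relation that can be solved for $t'(\xi)$.

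Concretely, I would start from the first equation of the system, namely $P(t(\xi),q(\xi)) = q(\xi)\,\xi$, and differentiate both sides with respect to $\xi$ via the chain rule:
\begin{equation*}
\frac{\partial P}{\partial t}(t(\xi),q(\xi))\, t'(\xi) + \frac{\partial P}{\partial q}(t(\xi),q(\xi))\, q'(\xi) = q'(\xi)\,\xi + q(\xi).
\end{equation*}
The key cancellation then comes from the second equation of the system, which says precisely that $\frac{\partial P}{\partial q}(t(\xi),q(\xi)) = \xi$. Substituting this into the differentiated identity, the two $q'(\xi)\,\xi$ terms on the two sides cancel, leaving
\begin{equation*}
\frac{\partial P}{\partial t}(t(\xi),q(\xi))\, t'(\xi) = q(\xi).
\end{equation*}

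Finally, one divides by $\frac{\partial P}{\partial t}(t(\xi),q(\xi))$, which is legitimate because $(t(\xi),q(\xi))\in D$ and Proposition \ref{potential}(2) guarantees $\frac{\partial P}{\partial t}<0$ on $D$; in particular this partial derivative is nonzero. This yields the claimed identity (\ref{t'}). There is no real obstacle here: the only substantive inputs are the analyticity of $\xi\mapsto(t(\xi),q(\xi))$ (already established in Proposition \ref{solution}) and the strict monotonicity of $P$ in $t$ (already established in Proposition \ref{potential}), and the computation itself is a one-line implicit differentiation that exploits the constraint $\partial_q P = \xi$ to eliminate $q'(\xi)$.
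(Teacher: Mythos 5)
Your proof is correct and follows essentially the same route as the paper: differentiate $P(t(\xi),q(\xi))=q(\xi)\xi$ with respect to $\xi$, use $\partial_q P=\xi$ to cancel the $q'(\xi)\xi$ terms, and divide by $\partial_t P\neq 0$. Your extra remark explicitly justifying the division via Proposition \ref{potential}(2) is a welcome but minor elaboration of the same argument.
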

\begin{proof} Recall that
\begin{eqnarray}\label{equations_xi}
\left\{
  \begin{array}{ll}
     P(t(\xi),q(\xi))=q(\xi)\xi, \\
     \displaystyle \frac{\partial P}{\partial q}(t(\xi),q(\xi))=
     \xi.
  \end{array}
\right.
\end{eqnarray}
By taking the derivation with respect to $\xi$ of the first equation
in (\ref{equations_xi}), we get
\begin{eqnarray*}
  t'(\xi)\frac{\partial P}{\partial t}(t(\xi),q(\xi)) +
  q'(\xi)\frac{\partial P}{\partial q}(t(\xi),q(\xi)) = q'(\xi) \xi + q(\xi).
\end{eqnarray*}
Taking into account the second equation in (\ref{equations_xi}), we
get
\begin{eqnarray}\label{E1_1}
  t'(\xi)\frac{\partial P}{\partial t}(t(\xi),q(\xi)) =  q(\xi).
\end{eqnarray}
\end{proof}

\begin{pro}
We have $t'(\xi) > 0$ for $\xi< \xi_0$, $t'(\xi_0) = 0$,  and
$t'(\xi) < 0$ for $\xi> \xi_0$. Furthermore,
\begin{eqnarray}\label{t-xi-0}
  t(\xi) \to 0 \quad  (\xi \to 0),
\end{eqnarray}
\begin{eqnarray}\label{t-xi-infty}
 t(\xi) \to 1/2   \quad  (\xi \to +\infty).
\end{eqnarray}
\end{pro}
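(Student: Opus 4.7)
The sign of $t'(\xi)$ follows directly from the formula of Proposition~\ref{sign-t'}, $t'(\xi) = q(\xi)/\frac{\partial P}{\partial t}(t(\xi), q(\xi))$: the denominator is negative by Proposition~\ref{potential}, while Proposition~\ref{sign-q-xi} pins down the sign of $q(\xi)$. Hence $t'(\xi) > 0$ on $(0, \xi_0)$, $t'(\xi_0) = 0$, and $t'(\xi) < 0$ on $(\xi_0, \infty)$. In particular $t$ is monotone on each half, so the limits $t_{0+} := \lim_{\xi \to 0+} t(\xi)$ and $t_\infty := \lim_{\xi \to \infty} t(\xi)$ exist in $[0, 1]$. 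Moreover, $\xi > \xi_0$ forces $q(\xi) > 0$ together with $q(\xi) < 2t(\xi) - 1$, so $t_\infty \geq 1/2$ automatically.

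For the upper bound $t_\infty \leq 1/2$: as $\xi = \frac{\partial P}{\partial q}(t(\xi), q(\xi)) \to \infty$, Proposition~\ref{p4.11} drives $(t(\xi), q(\xi))$ to the boundary $2t - q = 1$, so $q(\xi) \to 2t_\infty - 1$. Using the classical estimate $\zeta(s) \sim 1/(s-1)$ as $s \to 1^+$ together with the bounds of Proposition~\ref{p4.11}, one has $P(t, q) = -\log(2t - q - 1) + O(1)$ and correspondingly $\frac{\partial P}{\partial q}(t, q) \sim 1/(2t - q - 1)$ near the boundary. The second equation then gives $2t(\xi) - q(\xi) - 1 \sim 1/\xi$, hence $P \sim \log \xi$. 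Comparing with $P = q\xi \sim (2t_\infty - 1)\xi$ forces a linear rate to match a logarithmic one, so $2t_\infty - 1 = 0$.

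For $t_{0+} = 0$: here $\xi \to 0$ forces $q(\xi) \to -\infty$ by Proposition~\ref{p4.11}. The key additional input is the limit $\lim_{q \to -\infty} P(t, q) = -t\gamma_0$, which I would establish by a spectral argument: as $q \to -\infty$, the Ruelle operator $\mathcal{L}_{t\Psi - qF}$ concentrates on its single $k = 1$ term, the branch $\psi_1(x) = 1/(1+x)$, whose fixed point $1/\phi$ produces the eigenvalue $\phi^{-2t} = e^{-t\gamma_0}$. A first-order perturbation by the $k = 2$ branch refines this to $\frac{\partial P}{\partial q}(t, q) \sim c(t) \, 2^q$ as $q \to -\infty$. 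If we had $t_{0+} > 0$, the equation $P = q\xi \to -t_{0+}\gamma_0$ would force $q(\xi) \sim -t_{0+}\gamma_0/\xi$, while $\frac{\partial P}{\partial q} = \xi$ together with the refined rate would force $q(\xi) \sim \log_2(\xi/c(t_{0+}))$. These two prescriptions for the growth of $|q(\xi)|$, one like $1/\xi$ and one like $\log(1/\xi)$, are incompatible, so $t_{0+} = 0$.

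The main obstacle is the asymptotic analysis of the pressure function near the two boundary components of $D$. In particular, upgrading the coarse bounds of Proposition~\ref{p4.11} into the precise limit $\lim_{q \to -\infty} P(t, q) = -t\gamma_0$ and the subleading rate $\frac{\partial P}{\partial q} \sim c(t)\, 2^q$ requires a perturbative spectral analysis of the Ruelle transfer operator around its dominant single-branch approximation; this is the technical heart of the argument, and the sign-of-$t'$ part, by contrast, is essentially free once Propositions~\ref{potential},~\ref{sign-q-xi} and~\ref{sign-t'} are in hand.
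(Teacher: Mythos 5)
Your sign analysis of $t'(\xi)$ is exactly the paper's: it is an immediate consequence of Propositions~\ref{potential}, \ref{sign-q-xi} and \ref{sign-t'}, and this part is fine. For $\lim_{\xi\to\infty}t(\xi)=1/2$ your route through the bounds $-t\log4 + \log\zeta(2t-q)\leq P(t,q)\leq \log\zeta(2t-q)$ and the singularity of $\zeta$ at $1^+$ is a legitimate alternative; the paper instead works with the inverse branch $\xi_2(t)=\frac{\partial P}{\partial q}\bigl(t,q(t)\bigr)$ and the inequality $\xi_2(t)>\frac{\partial P}{\partial q}(t,0)\to\infty$ as $t\to 1/2^+$ coming from Proposition~\ref{p4.11}(2). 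Your observation that $q(\xi)>0$ and $2t(\xi)-q(\xi)>1$ force $t_\infty\geq 1/2$ is clean and the matching of a log-rate for $P$ against a linear rate for $q\xi$ then forces $q(\xi)\to 0$; with the convexity estimate $\frac{\partial P}{\partial q}(t,q)\leq \frac{P(t,q')-P(t,q)}{q'-q}$ applied at $q'=q+\tfrac{1}{2}(2t-q-1)$, this can be made rigorous without differentiating the $O(1)$ error term, so this half of your argument is sound.

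The genuine gap is in your treatment of $\lim_{\xi\to 0}t(\xi)=0$. You lean on the claim $\lim_{q\to-\infty}P(t,q)=-t\gamma_0$ together with a subleading rate $\frac{\partial P}{\partial q}(t,q)\sim c(t)\,2^q$, and you note yourself that these require a perturbative spectral analysis of the Ruelle operator around the single-branch approximation. Neither claim is established anywhere in the paper (the Lyapunov section only proves $\lim_{s\to\infty}P(s)/s=-\gamma_0$ for the one-variable pressure $P(s)=P(s,0)$, which is a different statement, and by a variational-principle argument rather than a spectral one). The paper's own proof sidesteps this difficulty completely: it introduces the auxiliary function $q_0(t)$ defined by $P(t,q_0(t))=0$, which exists on the range of $\xi_1$ because $P(t,q(t))=q(t)\xi_1(t)<0$ there while $P\to\infty$ at the boundary $q=2t-1$. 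Since $q(t)<q_0(t)$ and $\frac{\partial P}{\partial q}$ is increasing in $q$, one gets $\xi_1(t)=\frac{\partial P}{\partial q}\bigl(t,q(t)\bigr)<\frac{\partial P}{\partial q}\bigl(t,q_0(t)\bigr)$; the explicit formula $P(0,q)=\log\zeta(-q)$ forces $q_0(t)\to-\infty$ as $t\to 0$, and then Proposition~\ref{p4.11}(3) gives $\frac{\partial P}{\partial q}\bigl(t,q_0(t)\bigr)\to 0$. Everything used is already proved in Section~4, so no new spectral asymptotics are needed. You should replace your spectral-perturbation sketch by this $q_0(t)$ comparison argument; as it stands, the ``technical heart'' you identify is exactly the part of your proposal that is not proved and is not available in the paper.
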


\noindent $Proof.$
 By Propositions \ref{sign-q-xi} and \ref{sign-t'} and the
fact $\frac{\partial P}{\partial t}>0$, $t(\xi) $ is increasing on
$(0, \xi_0) $ and decreasing on $(\xi_0, \infty) $. Then by the
analyticity of $t(\xi) $, we can obtain two analytic inverse
functions on the two intervals respectively. For the first inverse
function, write $\xi_1=\xi_1(t)$. Then $\xi'_1(t)>0$ and
\begin{eqnarray*}
  \xi_1(t) = \frac{P(t,q(t))}{q(t)}= \frac{\partial P}{\partial
  q}(t,q(t)).
\end{eqnarray*}
(the equations (\ref{equations}) are considered as equations on
$t$).
 By Proposition \ref{sign-q-xi}, we have
$q(\xi_1(t))<0$ then $P(t,q(\xi_1(t)))<0$. By Proposition
\ref{p4.11} (1), $\lim\limits_{q\to 2t-1}P(t,q)=\infty$. Thus there
exists $q_0(t)$ such that $q_0(t)>q(t)$ and $P(t,q_0(t))=0$.
Therefore
\begin{eqnarray*}
 \xi_1(t) =  \frac{\partial P}{\partial q}(t,q(t))< \frac{\partial P}{\partial
   q}(t,q_0(t)).
\end{eqnarray*}
Since $P(0,q)=\log \zeta(-q)$, we have $\lim_{t\to 0} q_0(t) =
\-\infty$. Thus we get
\[\lim_{t\to 0}\frac{\partial
P}{\partial q}(t,q_0(t))= \lim_{q\to -\infty} \frac{\partial
P}{\partial q}(0,q)=0.\]
  Hence by $\xi_1(t)\geq 0$, we obtain $
\lim_{t\to 0}\xi_1(t) =0$ which implies (\ref{t-xi-0}).

Write $\xi_2=\xi_2(t) $ for the second inverse function. Then
$\xi'_2(t)<0 $ and
\begin{eqnarray*}
    \xi_2(t) = \frac{P(t,q(t))}{q(t)} =  \frac{\partial P}{\partial q}(t,q(t))> \frac{\partial P}{\partial q}(t,0)
    \to \infty \quad (t \to 1/2).
 \end{eqnarray*}
This implies (\ref{t-xi-infty}). \hfill{$\Box$}

Let us summarize.   We have proved that $t(\xi)$ is analytic on $(0,
\infty) $, $\lim\limits_{\xi\to 0} t(\xi)=0 $ and
$\lim\limits_{\xi\to \infty} t(\xi)=1/2 $. We have also proved that
$t(\xi) $ is increasing on $(0,\xi_0) $, decreasing on
$(\xi_0,\infty) $ and $t(\xi_0)=1 $.

\setcounter{equation}{0}

\section{ Khintchine spectrum}
Now we are ready to study the Hausdorff dimensions of the level set
$$
E_{\xi}=\{x\in [0,1):\lim_{n\to \infty} \frac{1}{n} \sum_{j=1}^{n}
\log a_j( x) =\xi\}.
$$
Since $\mathbb{Q}$ is countable, we need only to consider
$$
\{x\in [0,1)\setminus \mathbb{Q} :\lim_{n\to \infty} \frac{1}{n}
\sum_{j=1}^{n} \log a_j( x) =\xi\}.
$$
which admits the same Hausdorff dimension with $E_{\xi} $ and is
still denoted by $E_{\xi}$.

\subsection{Proof of Theorem 1.2 (1) and (2)}\label{5.1}
 Let $(t,q)\in D $
 and $\mu_{t,q},\tilde{\mu}_{t,q}$  be the measures in Proposition \ref{measure_potential}.
For $x\in [0,1)\setminus \mathbb{Q}$, let
$x=[a_1,\cdots,a_n,\cdots]$ and $\omega=\pi^{-1}(x)$. Then
$\omega=a_1\cdots a_n\cdots\in \mathbb{N}^\mathbb{N}$ and
$$
 \mu_{t,q}(I_n(x))=\mu_{t,q}(I_n(a_1,\cdots,a_n))=\tilde{\mu}_{t,q}([\omega|_n]).
$$
By the Gibbs property of $\tilde{\mu}_{t,q}$,
\begin{eqnarray*}
\tilde{\mu}_{t,q}(\pi([\omega|_n]))&\asymp&
\exp(-nP(t,q))\prod_{j=1}^{n} \omega_j^q(\omega_j+\pi(\sigma^j
\omega))^{-2t}.
\end{eqnarray*}
In other words,
\begin{eqnarray*}
\mu_{t,q}(I_n(x))\asymp  \exp(-nP(t,q))\prod_{j=1}^{n}
a_{j}^q[a_j,\cdots,a_n,\cdots]^{2t}.
\end{eqnarray*}
By Lemma \ref{lemma2.5}, $|I_n(x)|\asymp
|(T^n)'(x)|^{-1}=\prod_{j=0}^{n-1}|T^j(x)|^2$. Thus we have the
following Gibbs property of $\mu_{t, q}$:
\begin{eqnarray}\label{estimate}
\mu_{t,q}(I_n(x))\asymp  \exp(-nP(t,q))|I_n(x)|^{t}\prod_{j=1}^{n}
a_j^q.
\end{eqnarray}

It follows that
\begin{eqnarray*}
 \delta_{\mu_{t,q}} (x)= \lim_{n\to \infty} \frac{\log
\mu_{t,q}(I_n(x))}{\log |I_n(x)|} = t + \lim_{n \to \infty} \frac{q
\cdot \frac{1}{n} \sum_{j=1}^{n} \log a_j - P(t,q)}{\frac{1}{n}\log
|I_n(x)|}.
\end{eqnarray*}
The Gibbs property of
 $\tilde{\mu}_{t,q}$ implies that $\mu_{t,q}$ is ergodic. Therefore,
$$
 \delta_{\mu_{t,q}} (x) = t + \frac{q \int \log a_1(x) d \mu_{t,q} -
P(t,q)}{- \int \log|T'(x)| d \mu_{t,q}}  \ \ \ \ \ \mu_{t,q}-a.e..
$$
Using the formula (\ref{partial_t}) and (\ref{partial_q}) in
Proposition \ref{potential}, we have
\begin{eqnarray}
\delta_{\mu_{t,q}} (x) = t + \frac{q \frac{\partial P}{\partial q}
(t,q) - P(t,q) }{\frac{\partial P}{\partial t} (t,q)}\ \  \ \
\mu_{t,q} -a.e..
\end{eqnarray}
Moreover, the ergodicity of $\tilde{\mu}_{t,q}$ also implies that
the Lyapunov exponents $\lambda(x)$ exist for $\mu_{t,q}$ almost
every $x$ in $[0,1)$. Thus by (\ref{estimate}), Lemma \ref{d_delta}
and Lemma \ref{example-measure}, we obtain
 \begin{eqnarray}\label{123}
d_{\mu_{t,q}}(x)= \delta_{\mu_{t,q}} (x) = t + \frac{q
\frac{\partial P}{\partial q} (t,q) - P(t,q) }{\frac{\partial
P}{\partial t} (t,q)}\ \ \ \ \mu_{t,q} -a.e..
\end{eqnarray}
For $\xi \in (0, \infty)$, choose $(t,q)=(t(\xi),q(\xi))\in {D_0}$
 be the unique solution of (\ref{equations}).
Then (\ref{123}) gives
\begin{eqnarray*}
   d_{\mu_{t(\xi),q(\xi)}}(x) =t(\xi)\ \ \ \ \mu_{t,q}-a.e..
   \end{eqnarray*}
By the ergodicity of $\tilde{\mu}_{t(\xi),q(\xi)}$ and
(\ref{partial_q}), we have for $\mu_{t(\xi),q(\xi)}$ almost every
$x$,
\begin{eqnarray*}
\lim_{n\to \infty} \frac{1}{n}\sum_{j=1}^{n} \log a_j (x)= \int \log
a_1(x) d \mu_{t(\xi),q(\xi)} = \frac{\partial P}{\partial
q}(t(\xi),q(\xi))=\xi.
\end{eqnarray*}
So $\mu_{t(\xi),q(\xi)} $ is supported on $E_\xi$. Hence
\begin{eqnarray}\label{lowbound}
   \dim (E_\xi) \geq \dim \mu_{t(\xi),q(\xi)} =t(\xi).
\end{eqnarray}

In the following we will show that
\begin{eqnarray}\label{dim-less-t}
\dim (E_\xi) \leq t \qquad (\forall t> t(\xi)).
\end{eqnarray}
Then it will imply that $\dim (E_{\xi})=t(\xi) $ for any $\xi>0$.
 For any $t>t(\xi)$, take an $\epsilon_0>0$ such that
$$
   0<\epsilon_0 < \frac {P(t(\xi),q(\xi))-P(t,q(\xi))}{q(\xi)} \quad {\rm{if}} \
   q(\xi)>0,
$$
and
$$
   0<\epsilon_0 < \frac {P(t,q(\xi))-P(t(\xi),q(\xi))}{q(\xi)} \quad {\rm{if}} \ q(\xi)<0 .
$$
(For the special case $q(\xi)=0 $, {\it{i.e.}}, $\xi=\xi_0$, we have
$\dim E_{\xi}=1 $ which is a well-known result). Such an
$\epsilon_0$ exists, for $P(t,q)$ is strictly decreasing with
respect to $t$. For all $n \geq 1$, set
$$
  E_{\xi}^n(\epsilon_0):= \bigg\{x \in [0,1)\setminus \mathbb{Q} : \xi -\epsilon_0 < \frac{1}{n}
\sum_{j=1}^{n} \log a_j( x) < \xi+ \epsilon_0 \bigg\}.
$$
Then we have
$$
    E_{\xi} \subset \bigcup_{N=1}^{\infty} \bigcap_{n=N}^{\infty }
E_{\xi}^n(\epsilon_0).
$$
Let $\mathcal{I}(n,\xi,\epsilon_0)$ be the collection of all $n$-th
order cylinders $I_n(a_1,\cdots, a_n)$ such that
$$
  \xi -\epsilon_0 < \frac{1}{n}\sum_{j=1}^{n} \log a_j (x) < \xi+
\epsilon_0.
$$
Then
$$
  E_{\xi}^n(\epsilon_0) = \bigcup_{J \in \mathcal{I}(n,\xi,\epsilon_0)} J.
$$

Hence $\{J:J \in \mathcal{I}(n,\xi,\epsilon_0), n\geq 1 \}$ is a
cover of $E_{\xi}$. When $q(\xi)>0$, by (\ref{estimate}), we have
\begin{eqnarray*}
 & &\sum_{n=1}^{\infty} \sum_{J\in \mathcal{I}(n, \xi, \epsilon_0)}
 |J|^t\\
 &\leq&  \sum_{n=1}^{\infty}  \sum_{(a_1 \cdots a_n)> e^{n(\xi - \epsilon_0)}}
 \frac{e^{nP(t,q(\xi))}}{(a_1\cdots a_n)^{q(\xi)}}\cdot
 \frac{|J|^{t} (a_1 \cdots a_n)^{q(\xi)}}{e^{nP(t,q(\xi))}}
 \\
 &\leq& C \cdot \sum_{n=1}^{\infty}  e^{n(P(t,q(\xi))-(\xi-\epsilon_0)q(\xi))}
 \cdot \sum_{J\in \mathcal{I}(n, \xi, \epsilon_0)} \mu_{t,q(\xi)}(J) < \infty
\end{eqnarray*}
where $C$ is a constant. When $q(\xi)<0$,
\begin{eqnarray*}
 & &\sum_{n=1}^{\infty} \sum_{J\in \mathcal{I}(n, \xi, \epsilon_0)} |J|^t \\
 &\leq & \sum_{n=1}^{\infty}  \sum_{(a_1 \cdots a_n)< e^{n(\xi + \epsilon_0)}}
 \frac{e^{nP(t,q(\xi))}}{(a_1\cdots a_n)^{q(\xi)}}\cdot
 \frac{|J|^{t} (a_1 \cdots a_n)^{q(\xi)}}{e^{nP(t,q(\xi))}}
 \\
 &\leq& C \cdot \sum_{n=1}^{\infty}  e^{n(P(t,q(\xi))-(\xi+\epsilon_0)q(\xi))}
 \cdot \sum_{J\in \mathcal{I}(n, \xi, \epsilon_0)} \mu_{t,q(\xi)}(J) < \infty.
\end{eqnarray*}
Hence we get (\ref{dim-less-t}).

 For the special case $\xi=0$, we need only to show $\dim (E_0)=
 0$. This can be induced by the same process. For any $t>0$, since $\lim_{\xi\to 0} t(\xi) =0$,
 there exists $\xi>0$ such that $0< t(\xi) < t$. We can also choose
 $\epsilon_0>0$ such that
 \begin{eqnarray*}
\frac{P(t,q(\xi))-P(t(\xi),q(\xi))}{q(\xi)} > \epsilon_0.
 \end{eqnarray*}
For $n\geq 1$, set
$$
  E_{0}^n(\epsilon_0):= \bigg\{x \in [0,1)\setminus \mathbb{Q} :  \frac{1}{n}
\sum_{j=1}^{n} \log a_j( x) < \xi+ \epsilon_0 \bigg\}.
$$
We have
$$
    E_{0} \subset \bigcup_{N=1}^{\infty} \bigcap_{n=N}^{\infty }
E_{0}^n(\epsilon_0).
$$
By the same calculation, we get $\dim (E_0) \leq t$. Since $t$ can
be arbitrary small, we obtain $\dim (E_0) =0$.

 By the discussion in the preceding subsection, we have proved
 Theorem \ref{thm-Birkhoff} (1) and (2).

\subsection{Proof of Theorem 1.2 (3) and (4)}

 We are going to investigate  more properties of the functions
$q(\xi)$ and $t(\xi)$.

\begin{pro}\label{q-xi-lim}
 We have
\begin{eqnarray*}
  \lim_{\xi \to 0} q(\xi)= -\infty, \quad \lim_{\xi \to \infty} q(\xi)= 0.
\end{eqnarray*}
\end{pro}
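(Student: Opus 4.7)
\textbf{Proof plan for Proposition~\ref{q-xi-lim}.}

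\smallskip

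\emph{The limit $\lim_{\xi\to\infty}q(\xi)=0$ is the easy half.} Since $(t(\xi),q(\xi))\in D_0$, the defining inequality $2t(\xi)-q(\xi)>1$ forces $q(\xi)<2t(\xi)-1$. By Proposition~\ref{sign-q-xi}, we also have $q(\xi)>0$ whenever $\xi>\xi_0$. Combined with the already-proved limit $t(\xi)\to 1/2$ as $\xi\to\infty$, this gives the squeeze
\begin{equation*}
0<q(\xi)<2t(\xi)-1 \longrightarrow 0 \qquad (\xi\to\infty),
\end{equation*}
so $q(\xi)\to 0$ immediately. No further work is needed.

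\smallskip

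\emph{The limit $\lim_{\xi\to 0}q(\xi)=-\infty$ is handled by contradiction.} Suppose the conclusion fails. Then there is a sequence $\xi_n\to 0^+$ along which $q(\xi_n)$ stays bounded below. Since $t(\xi_n)\to 0$ (already proved) and the constraint $q(\xi_n)<2t(\xi_n)-1$ forces $q(\xi_n)<-1+o(1)$, the sequence $q(\xi_n)$ lies in a compact subset of $(-\infty,-1]$. Passing to a subsequence, assume $q(\xi_n)\to q_*\in(-\infty,-1]$, so that $(t(\xi_n),q(\xi_n))\to(0,q_*)$.

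\smallskip

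I would now split on where $q_*$ lies and in both cases contradict the first equation of the defining system, namely $P(t(\xi_n),q(\xi_n))=q(\xi_n)\xi_n\to q_*\cdot 0=0$.

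If $q_*<-1$, the limit point $(0,q_*)$ lies in the interior of $D$, so by continuity (indeed analyticity) of $P$ on $D$ together with $P(0,q)=\log\zeta(-q)$ from Proposition~\ref{p4.11}(1),
\begin{equation*}
P(t(\xi_n),q(\xi_n)) \longrightarrow P(0,q_*)=\log\zeta(-q_*)>0,
\end{equation*}
contradicting the value $0$ forced by the defining equation.

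If $q_*=-1$, then $2t(\xi_n)-q(\xi_n)\to 1^+$, i.e.\ we approach the boundary of $D$. The lower bound
\begin{equation*}
P(t,q)\geq -t\log 4+\log\zeta(2t-q)
\end{equation*}
from Proposition~\ref{p4.11} together with $\zeta(s)\to\infty$ as $s\to 1^+$ and $t(\xi_n)\to 0$ gives $P(t(\xi_n),q(\xi_n))\to\infty$, again contradicting $P(t(\xi_n),q(\xi_n))\to 0$.

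\smallskip

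\emph{Main obstacle.} The only subtlety is the boundary case $q_*=-1$, where naive continuity of $P$ is not available because $(0,-1)$ lies on $\partial D$; I avoid this by appealing to the explicit lower bound on $P$ near the boundary already established in Proposition~\ref{p4.11}. The rest of the argument is a direct exploitation of the two defining equations for $(t(\xi),q(\xi))$, together with the previously proved limiting behaviour of $t(\xi)$.
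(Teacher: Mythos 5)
Your proof is correct, and the second half ($\xi\to\infty$) is exactly the paper's squeeze argument. The first half ($\xi\to 0$), however, takes a genuinely different route. The paper also argues by contradiction, extracting a subsequence with $q(\xi_\delta)\to M>-\infty$, but then it exploits the \emph{second} defining equation $\frac{\partial P}{\partial q}(t(\xi),q(\xi))=\xi$ together with the strict positivity of $\frac{\partial P}{\partial q}$ (Proposition~\ref{potential}~(3)): the left-hand side must converge to $\frac{\partial P}{\partial q}(0,M)>0$ while the right-hand side is $\xi_\delta\to 0$, a contradiction in one line. You instead exploit the \emph{first} equation $P(t(\xi),q(\xi))=q(\xi)\xi\to 0$ and contradict it by estimating the pressure directly via Proposition~\ref{p4.11}, which costs you a case split on whether the limit point $q_*$ lands on the boundary $q_*=-1$ or in the interior $q_*<-1$. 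Each approach has a small virtue: the paper's is shorter, but it quietly glosses over the boundary case $M=-1$ where $(0,M)\notin D$ and $\frac{\partial P}{\partial q}(0,M)$ is not literally defined (it would require invoking Proposition~\ref{p4.11}~(2), i.e.\ $\frac{\partial P}{\partial q}\to+\infty$ near the boundary, which the paper does not say). Your version handles that boundary case explicitly with the two-sided pressure bound, so it is in fact slightly more careful, at the cost of an extra paragraph. Both are valid; the difference is which of the two defining equations for $(t(\xi),q(\xi))$ carries the contradiction.
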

\begin{proof}  We prove the first limit by contradiction. Suppose there
exists a subsequence $\xi_{\delta}\to 0 $ such that
$q(\xi_{\delta})\to M
> -\infty $. Then by (\ref{t-xi-0}) and Proposition \ref{potential}
(3), we have
\begin{eqnarray*}
   \lim_{\xi_{\delta} \to 0} \frac{\partial P}{\partial q}(t(\xi_{\delta}),q(\xi_{\delta}))=
   \frac{\partial P}{\partial q}(0,M) > 0.
\end{eqnarray*}
This contradicts with
\begin{eqnarray*}
   \frac{\partial P}{\partial q}(t(\xi_{\delta}),q(\xi_{\delta}))= \xi_{\delta} \to 0.
\end{eqnarray*}
On the other hand, we know that $q(\xi)\geq 0 $ when $\xi\geq \xi_0
$, and $0 \leq q(\xi) < 2t(\xi) -1 $. Then by (\ref{t-xi-infty}), we
have $\lim\limits_{\xi \to \infty} q(\xi) = 0$. \end{proof}

\medskip

Apply this proposition and (\ref{t'}), combining
(\ref{part-q-infty}) and (\ref{P/q-infty}). We get
\begin{eqnarray*}
   \lim_{\xi \to 0} t'(\xi) 
   =+\infty, \qquad \lim_{\xi \to \infty} t'(\xi) =0.
\end{eqnarray*}
This is the assertion (3) of Theorem \ref{thm-Birkhoff}.
\medskip

Now we will prove the last assertion of Theorem \ref{thm-Birkhoff},
{\it{i.e.}}, $t''(\xi_0)<0$ and there exists $\xi_1>\xi_0 $ such
that $t''(\xi_1)>0 $, basing on the following proposition.

\begin{pro}
For $\xi \in (0, +\infty) $, we have
\begin{eqnarray}\label{q'}
   q'(\xi)= \frac{1- t'(\xi)\frac{\partial ^2 P}{\partial t \partial q}\big( t(\xi),q(\xi)\big)}
   {\frac{\partial ^2 P}{\partial  q^2}\big( t(\xi),q(\xi)\big)};
\end{eqnarray}
\begin{eqnarray}\label{t''}
  t''(\xi)=   \frac{ t'(\xi)^2 \frac{\partial ^2 P}{\partial  t^2}\big( t(\xi),q(\xi)\big)
                       -q'(\xi)^2  \frac{\partial ^2 P}{\partial  q^2}\big( t(\xi),q(\xi)\big)}
                      {-\frac{\partial P}{\partial t}\big( t(\xi),q(\xi)\big)}.
\end{eqnarray}
\end{pro}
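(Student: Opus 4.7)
The plan is to obtain both formulas by straightforward implicit differentiation of the defining system for $(t(\xi),q(\xi))$, namely
\begin{equation*}
P\bigl(t(\xi),q(\xi)\bigr)=q(\xi)\,\xi,\qquad \frac{\partial P}{\partial q}\bigl(t(\xi),q(\xi)\bigr)=\xi,
\end{equation*}
together with the identity
\begin{equation*}
t'(\xi)\,\frac{\partial P}{\partial t}\bigl(t(\xi),q(\xi)\bigr)=q(\xi)
\end{equation*}
already established as equation (\ref{E1_1}) in the proof of Proposition \ref{sign-t'}. Since Proposition \ref{solution} guarantees that $t(\xi)$ and $q(\xi)$ are analytic on $(0,+\infty)$, all the derivatives encountered below exist and the manipulations are legitimate.

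First I would derive the formula for $q'(\xi)$. Differentiating the second equation of the system with respect to $\xi$ and applying the chain rule gives
\begin{equation*}
\frac{\partial^2 P}{\partial t\,\partial q}\bigl(t(\xi),q(\xi)\bigr)\,t'(\xi)+\frac{\partial^2 P}{\partial q^2}\bigl(t(\xi),q(\xi)\bigr)\,q'(\xi)=1.
\end{equation*}
Since $\frac{\partial^2 P}{\partial q^2}>0$ on $D$ by Proposition \ref{potential}(3), we may solve for $q'(\xi)$ and obtain (\ref{q'}) directly.

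Next I would treat $t''(\xi)$ by differentiating the identity $t'(\xi)\,\frac{\partial P}{\partial t}=q(\xi)$. The chain rule yields
\begin{equation*}
t''(\xi)\,\frac{\partial P}{\partial t}+t'(\xi)\Bigl(\frac{\partial^2 P}{\partial t^2}\,t'(\xi)+\frac{\partial^2 P}{\partial t\,\partial q}\,q'(\xi)\Bigr)=q'(\xi),
\end{equation*}
where all partials are evaluated at $(t(\xi),q(\xi))$. The substitution I want to use now comes from the previous step: the relation for $q'(\xi)$ rewrites as $t'(\xi)\,\frac{\partial^2 P}{\partial t\,\partial q}=1-\frac{\partial^2 P}{\partial q^2}\,q'(\xi)$. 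Plugging this into the last displayed equation collapses two of the terms against $q'(\xi)$, leaving
\begin{equation*}
t''(\xi)\,\frac{\partial P}{\partial t}=-t'(\xi)^2\,\frac{\partial^2 P}{\partial t^2}+q'(\xi)^2\,\frac{\partial^2 P}{\partial q^2}.
\end{equation*}
Dividing by $\frac{\partial P}{\partial t}$ (which is nonzero, indeed strictly negative by Proposition \ref{potential}(2)) and flipping a sign gives exactly (\ref{t''}).

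There is no real obstacle here: the argument is routine implicit differentiation, with the only subtle point being the algebraic cancellation that eliminates the mixed partial $\frac{\partial^2 P}{\partial t\,\partial q}$ from the final expression for $t''(\xi)$. That cancellation is precisely what makes (\ref{t''}) useful later, because it expresses $t''$ in terms of the diagonal entries of the Hessian $H(t,q)$ from Proposition \ref{potential}(4), which will be needed to decide the sign of $t''(\xi_0)$ and to exhibit a point $\xi_1>\xi_0$ with $t''(\xi_1)>0$.
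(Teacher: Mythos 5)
Your proposal is correct and follows essentially the same route as the paper: both start from the identity $t'(\xi)\,\frac{\partial P}{\partial t}=q(\xi)$ and the second equation of the defining system, differentiate each with respect to $\xi$, and then eliminate the mixed partial $\frac{\partial^2 P}{\partial t\,\partial q}$. Your substitution using $t'(\xi)\,\frac{\partial^2 P}{\partial t\,\partial q}=1-\frac{\partial^2 P}{\partial q^2}\,q'(\xi)$ is algebraically the same operation as the paper's step of subtracting the differentiated second equation, multiplied by $q'(\xi)$, from the differentiated identity.
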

\begin{proof} Taking derivative of
 (\ref{E1_1})  with respect to $\xi$, we get
\begin{equation}\label{E1_2}
 t'(\xi)^2 \frac{\partial ^2 P}{\partial  t^2}\big( t(\xi),q(\xi)\big)
 + q'(\xi) t'(\xi)\frac{\partial ^2 P}{\partial q \partial t}\big( t(\xi),q(\xi)\big)
 + t''(\xi)\frac{\partial P}{\partial t}\big( t(\xi),q(\xi)\big) = q'(\xi).
\end{equation}
Taking derivative of the second equation of (\ref{equations_xi})
with respect to $\xi$, we get
\begin{eqnarray}\label{E2_1}
  t'(\xi)\frac{\partial ^2 P}{\partial t \partial q}\big( t(\xi),q(\xi)\big)  +
 q'(\xi) \frac{\partial ^2 P}{\partial  q^2}\big( t(\xi),q(\xi)\big)   = 1,
\end{eqnarray}
which gives immediately (\ref{q'}).

Subtract (\ref{E2_1}) multiplied by $q'(\xi) $ from (\ref{E1_2}), we
get (\ref{t''}). \hfill{$\Box$}
\medskip

We divide the proof of the assertion (4) of  Theorem
\ref{thm-Birkhoff}  into two parts. \vskip 5pt
 \noindent $Proof \  of  \ t''(\xi_0)<0$. By Proposition \ref{potential},
  $\frac{\partial P}{\partial t}(1,0) < 0$. Since $q(\xi_0)=0$,
by (\ref{t'}) we have $t'(\xi_0)=0$. Also by Proposition
\ref{potential}, we get
\begin{eqnarray*}
0< \frac{\partial ^2 P}{\partial  t^2}( t(\xi_0),q(\xi_0))=
\frac{\partial ^2 P}{\partial  t^2}(1,0)< + \infty,
\end{eqnarray*}
and
\begin{eqnarray*}
 0 \leq \frac{\partial ^2 P}{\partial  q^2}(t(\xi_0),q(\xi_0))
 =\frac{\partial ^2 P}{\partial  q^2}(1,0)< + \infty.
\end{eqnarray*}
By (\ref{q'}) and (\ref{t''}), we have
\begin{equation}\label{t''t''}
  t''(\xi)= \frac{ t'(\xi)^2 \frac{\partial ^2 P}{\partial  t^2}( t(\xi),q(\xi))
  \frac{\partial ^2 P}{\partial  q^2}( t(\xi),q(\xi))
  -\left( 1- t'(\xi)\frac{\partial ^2 P}{\partial t \partial q}( t(\xi),q(\xi))\right)^2}
  {-\frac{\partial P}{\partial t}(t(\xi),q(\xi))\frac{\partial ^2 P}{\partial  q^2}( t(\xi),q(\xi))}.
\end{equation}
Thus by $t'(\xi_0)=0 $, we have $t''(\xi_0) < 0$. \hfill{$\Box$}
\medskip

\noindent$Proof \ of  \ t''(\xi_1)>0$. Proposition \ref{q-xi-lim}
shows $\lim\limits_{\xi \to \infty} q(\xi) = 0$ and we know that
$q(\xi_0)= 0 $. However, $q(\xi) $ is not always equal to $0$, so
there exists a $\xi_1 \in [\xi_0, +\infty)$,
 such that $q'(\xi_1) <0$.
Write
\begin{eqnarray*}
H(t,q):= \left(
  \begin{array}{cccc}
    \frac{\partial^2 P}{\partial t^2} & \frac{\partial^2 P}{\partial t \partial q} \\
    \quad &\quad \\
    \frac{\partial^2 P}{\partial t \partial q} & \frac{\partial^2 P}{\partial q^2} \\
  \end{array}
\right)
\end{eqnarray*}
 and add (\ref{E2_1}) multiplied by $q'(\xi) $ to (\ref{E1_2}), we get
 \begin{eqnarray}
 \Big(t'(\xi), q'(\xi)\Big)H(t,q)\big(t'(\xi), q'(\xi)\big)^T
 + \frac{\partial P}{\partial t}\big(t(\xi),q(\xi)\big)t''(\xi)=2q'(\xi).
\end{eqnarray}
Since $H(t,q)$ is definite positive,  $\frac{\partial P}{\partial
t}(t,q) <0$ and $q'(\xi_1) <0$, we have $t''(\xi_1)> 0$. This
completes the proof. \end{proof}

\section{Lyapunov spectrum}
In this last section, we follow the same procedure  as in Section 4
and Section 5 to deduce the Lyapunov spectrum of the Gauss map.
Kesseb\"{o}hmer recently pointed out to us that the Lyapunov
spectrum was also studied by M. Kesseb\"{o}hmer and B. Stratmann
\cite{Kesse2007}.

 Take
$$
F=\Psi = \{\log |\psi'_i|: i \in \mathbb{N}\}.
$$
instead of $ F=\{-\log i: i \in \mathbb{N}\} $ and $\Psi = \{\log
|\psi'_i|: i \in \mathbb{N}\}$. Then the strong H\"older family
becomes $(\tilde{t}-q)\Psi$ and $D $ should be changed to
$$\tilde{D}:= \{(\tilde{t},q): \tilde{t}-q
>1/2 \}.$$ Here and in the rest of this section we will use $\tilde{t}$ instead of $t$ to
distinguish the present situation from that of Khintchine exponents.
What we have done in Section 4 is still useful.
 Denote by $P_1(\tilde{t},q)$ the pressure $P((\tilde{t}-q)\Psi)$. Then
$$P_1(\tilde{t},q)=P(\tilde{t}-q), \quad \mbox{\rm  with} \
P(\cdot)=P(\cdot,0),$$ where $P(\cdot, \cdot)$ is the pressure
function studied in Section 4. Hence $P_1(\tilde{t},q) $ is analytic
and similar equations (\ref{partial_t}) and (\ref{partial_q}) are
obtained just with $\log |T'(x)| $ instead of $\log a_1(x) $.

To determine the Lyapunov spectrum, we begin with the following
proposition which take the place of Proposition \ref{p4.11}.

\begin{pro} For $(\tilde{t},q)\in \tilde{D}$, we have
\begin{eqnarray}\label{inequ-Lya}
  -(\tilde{t}-q)\log 4 +\log \zeta(2\tilde{t}-2q) \leq P_1(\tilde{t},q) \leq \log \zeta(2\tilde{t}-2q).
\end{eqnarray}
Consequently,\\
\indent {\rm (1)} for any point $(\tilde{t}_0,q_0)$ on the line
$\tilde{t}-q=1/2$,
$$
\lim_{(\tilde{t},q)\to (\tilde{t}_0,q_0)}P(\tilde{t},q)=\infty;
$$
 \indent {\rm (2)} for fixed $\tilde{t}\in \mathbb{R}$,
\begin{eqnarray*}
  \lim_{q\to \tilde{t}-\frac{1}{2}} \frac{\partial P_1}{\partial q}(\tilde{t},q) =
  +\infty;
\end{eqnarray*}
 \indent {\rm (3)} recalling $\gamma_0= 2\log
\frac{1+\sqrt{5}}{2} $,  for fixed $\tilde{t}\in \mathbb{R}$,
\begin{eqnarray*}
  \lim_{q\to -\infty} \frac{P_1(\tilde{t},q)}{q} = \gamma_0, \qquad
  \lim_{q\to -\infty} \frac{\partial P_1}{\partial q}(\tilde{t},q) = \gamma_0.
\end{eqnarray*}

\end{pro}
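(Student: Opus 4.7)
The plan is to exploit the identity $P_1(\tilde t,q)=P(\tilde t-q,0)$, reducing each claim to a statement about the one-variable function $s\mapsto P(s,0)$. The bracketing inequality (\ref{inequ-Lya}) is the direct translation of Proposition \ref{p4.11}'s inequality applied with parameters $t=\tilde t-q$ and $q=0$. Claim (1) then follows from this lower bound together with $\zeta(s)\to+\infty$ as $s\to 1^+$. For Claim (2), strict convexity of $P$ in $t$ transfers to strict convexity of $P_1$ in $q$; Fact \ref{convex} then gives $\partial P_1/\partial q(\tilde t,q)\ge(P_1(\tilde t,q)-P_1(\tilde t,q_0))/(q-q_0)$ for $q>q_0$, and the right-hand side diverges to $+\infty$ as $q\to\tilde t-1/2$ by (1).

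The substance is Claim (3). Setting $s:=\tilde t-q$, so $q\to-\infty$ corresponds to $s\to+\infty$, I would prove the equivalent statement $P(s,0)/s\to-\gamma_0$; the desired limit $P_1(\tilde t,q)/q\to\gamma_0$ then follows from $P_1(\tilde t,q)/q=(P(s,0)/s)(s/(\tilde t-s))$ and $s/(\tilde t-s)\to-1$. The naive bounds (\ref{inequ-Lya}) are insufficient here: dividing by $q<0$, they yield only $0\le\liminf P_1/q\le\limsup P_1/q\le\log 4$, and one must sharpen the constant from $\log 4=2\log 2$ to $\gamma_0=2\log\frac{1+\sqrt 5}{2}$. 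For the lower bound $P(s,0)\ge-s\gamma_0$, I evaluate the defining sum on the single word $\omega=(1,\dots,1)$ at $x=0$: each factor $[\underbrace{1,\dots,1}_{k}]$ equals $F_k/F_{k+1}$ where $F_k$ is the $k$-th Fibonacci number, so the product telescopes to $F_{n+1}^{-2s}\sim 5^{s}\bigl(\tfrac{1+\sqrt 5}{2}\bigr)^{-2s(n+1)}$; taking $(1/n)\log$ yields $P(s,0)\ge-s\gamma_0$. For the matching upper bound I use the derivative formula (\ref{partial_t}), namely $\partial_s P(s,0)=-\int\log|T'|\,d\mu_{s,0}$, together with the standard observation (recorded pointwise in Section 2) that every $T$-invariant probability measure has Lyapunov exponent at least $\gamma_0$. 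This yields $\partial_s P(s,0)\le-\gamma_0$; integrating from $s=1$, where $P(1,0)=0$ by Proposition \ref{p4.9}, produces $P(s,0)\le-(s-1)\gamma_0$, hence $P(s,0)/s\to-\gamma_0$.

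The second limit in (3), for $\partial P_1/\partial q$, follows from convexity: $\partial P_1/\partial q$ is monotone in $q$, so it has a limit $L$ as $q\to-\infty$; Fact \ref{convex} sandwiches the secant slope $(P_1(\tilde t,q_0)-P_1(\tilde t,q))/(q_0-q)$, which tends to $\gamma_0$ by the first limit, between $\partial P_1/\partial q(q)$ and $\partial P_1/\partial q(q_0)$, forcing $L=\gamma_0$. The main obstacle throughout is Claim (3)'s upper bound: the inequalities (\ref{inequ-Lya}) alone supply only the non-sharp constant $\log 4$, and recovering the exact value $\gamma_0$ genuinely requires pairing the derivative formula with the universal Lyapunov lower bound.
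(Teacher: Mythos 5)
Your argument is correct, and the interesting divergence from the paper is in the upper bound of Claim~(3). The paper proves $P(\tilde t)+\tilde t\gamma_0=o(\tilde t)$ by invoking the variational principle, extracting a weak-$*$ limit $\mu_\infty$ of equilibrium measures $\mu_{t_n}$, using upper semi-continuity of metric entropy, and then devoting a separate lemma to showing $h_{\mu_\infty}=0$ (which in turn calls back to $\dim E_0=0$ from Section~5). It also cites a result of Jenkinson to identify the range of $P'$ with the set of Lyapunov averages of invariant measures, deducing $\lim_{\tilde t\to\infty}P'(\tilde t)=-\gamma_0$ from that. Your route bypasses all of this: from the derivative formula $P'(s)=-\int\log|T'|\,d\mu_{s,0}$ (valid for $s>1/2$ by Proposition~\ref{potential}) and the pointwise bound $\lambda(x)\ge\gamma_0$ — which via Birkhoff gives $\int\log|T'|\,d\mu\ge\gamma_0$ for every $T$-invariant probability $\mu$, the same inequality the paper itself records as \eqref{less-gamma0} — you get $P'(s)\le-\gamma_0$ directly, and integrating from $s=1$ where $P(1)=0$ yields $P(s)\le -(s-1)\gamma_0$. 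Paired with the lower bound $P(s)\ge -s\gamma_0$ (same one-word computation as the paper), this squeezes $P(s)/s\to-\gamma_0$ and, by convexity, also forces $P'(s)\to-\gamma_0$. This is genuinely simpler and self-contained within facts already established in Section~4; it dispenses with the entropy and weak-convergence machinery entirely. One small point worth spelling out in the last step: Fact~\ref{convex} applied with the variable \emph{right} endpoint gives, for $q<q_1$, that $\bigl(P_1(\tilde t,q_1)-P_1(\tilde t,q)\bigr)/(q_1-q)\le \partial P_1/\partial q(\tilde t,q_1)$; letting $q\to-\infty$ with $q_1$ fixed gives $\gamma_0\le\partial P_1/\partial q(\tilde t,q_1)$ for every $q_1$, which is the inequality needed to upgrade $L\le\gamma_0$ to $L=\gamma_0$. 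Your write-up gestures at this but the sandwich as stated (with fixed $q_0$) only produces the $\le$ direction, so the reader has to supply the extra line.
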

\begin{proof} $P_1(\tilde{t},q)$ is defined as
\begin{eqnarray*}
P_1(\tilde{t},q):=\lim_{n\to \infty} \frac{1}{n} \log
\sum_{\omega_1=1}^{\infty}\cdots\sum_{\omega_n=1}^{\infty} \exp
\left(
  \sup_{x\in [0,1]} \log \prod_{j=1}^{n}
([\omega_j,\cdots, \omega_n+x])^{2(\tilde{t}-q)}\right).
\end{eqnarray*}
The proofs of (1) and (2) are the same as in the proof of
Proposition \ref{p4.11}.

To get (3), we follow another method. Since
$P_1(\tilde{t},q)=P(\tilde{t}-q)$, we need only to show
$$\lim_{\tilde{t}\to \infty} P'(\tilde{t})=-\gamma_0,  \  \
P(\tilde{t}) + \tilde{t}\gamma_0 = o (\tilde{t}) \quad (\tilde{t}\to
\infty).$$

By Proposition \ref{p4.9}, $P(\tilde{t})$ is analytic on
$(1/2,\infty)$. Let $E:= \{P'(\tilde{t}) : \tilde{t}>1/2\}$, denote
by $\mbox{\rm Int}(E)$ and $\mbox{\rm Cl}(E) $ the interior and
closure of $E$. By a result in \cite{Jen1}, we have
\begin{eqnarray*}
 \mbox{\rm Int}(E) \subset \left\{ -\int \log |T'(x)| d\mu : \mu \in
 \mathcal{M} \right\} \subset \mbox{\rm Cl}(E),
\end{eqnarray*}
where $\mathcal{M}$ is the set of the invariant measures on $[0,1]$.
By Birkhoff's theorem, for any $\mu \in \mathcal{M}$, we have
\begin{eqnarray*}
\int \lambda(x) d \mu =\int \log |T'(x)| d\mu.   
\end{eqnarray*}
 However, we know
that $\lambda(x)\geq \gamma_0=2\log \frac{1+\sqrt{5}}{2}$. Thus
\begin{eqnarray}\label{less-gamma0}
-\int \log |T'(x)| d\mu  \leq - \gamma_0 \qquad \forall\mu \in
 \mathcal{M}.
\end{eqnarray}
Let $\theta_0=\frac{\sqrt{5}-1}{2}$. Then $T(\theta_0)=\theta_0$ and
the Dirac measure $\mu=\delta_{\theta_0}$  is invariant, and
\begin{eqnarray*}
-\int \log |T'(x)| d\delta_{\theta_0} = - \log |T'(\theta_0)| =
-\gamma_0.
\end{eqnarray*}
However, by the continuity of $P'$, we know that $E$ is an interval.
Therefore $-\gamma_0$ is the right endpoint of $E$. Since
$P'(\tilde{t})$ is increasing, we get $$\lim_{\tilde{t}\to \infty}
P'(\tilde{t}) = - \gamma_0.$$

Let $\{\beta_n\}_{n\geq 1}$ be such that $\beta_n < -\gamma_0$ and
$\lim\limits_{n\to \infty} \beta_n=-\gamma_0$. There exist $t_n\in
\mathbb{R}$ such that $t_n\to \infty$ and $P'(t_n)=\beta_n$. By the
variational principle (\cite{Wa3}, see also \cite{Ma3}), there
exists an ergodic measure $\mu_{t_n}$ such that
\begin{eqnarray*}
P(t_n)=h_{\mu_{t_n}} - t_n \int \log |T'|(x) d\mu_{t_n},
\end{eqnarray*}
where $h_{\mu_{t_n}}$ stands for the metric entropy of $\mu_{t_n}$.
By the compactness of $\mathcal{M}$ there exists an invariant
measure $\mu_{\infty}$ which is the weak limit of $\mu_{t_n}$(more
precisely some subsequence of $\mu_{t_n}$. But, without loss of
generality, we write it as $\mu_{t_n}$). By the semi-continuity of
metric entropy,  for any $\epsilon
>0$ we have $h_{\mu_{t_n}}\leq h_{\mu_{\infty}}+\epsilon$ when $t_n$ is
large enough. Thus by (\ref{less-gamma0}),
\begin{eqnarray*}
P(t_n)\leq h_{\mu_{\infty}}+\epsilon - t_n\gamma_0.
\end{eqnarray*}
We will show that $h_{\mu_{\infty}}=0$ (see the next lemma), which
will imply
\begin{eqnarray*}
P(t_n)\leq \epsilon - t_n\gamma_0.
\end{eqnarray*}
However, by the definition of $P_1(\tilde{t},q)$, $P(\tilde{t})$ can
be written as
\begin{eqnarray*} P(\tilde{t})=\lim_{n\to \infty}
\frac{1}{n} \log
\sum_{\omega_1=1}^{\infty}\cdots\sum_{\omega_n=1}^{\infty} \exp
\Bigg(
  \sup_{x\in [0,1]} \log \prod_{j=1}^{n}
([\omega_j,\cdots, \omega_n+x])^{2\tilde{t}}\Bigg).
\end{eqnarray*}
Thus if we just take one term in the summation, we have
\begin{eqnarray*} P(\tilde{t})\geq \lim_{n\to \infty}
\frac{1}{n} \log \exp \Bigg(
  \sup_{x\in [0,1]} \log \prod_{j=1}^{n}
([\underbrace{1,\cdots,1}_{n-j},
1+x])^{2\tilde{t}}\Bigg)=-\tilde{t}\gamma_0.
\end{eqnarray*}
Hence we get
\begin{eqnarray*}
P(\tilde{t}) + \tilde{t}\gamma_0 = o(\tilde{t}) \qquad (\tilde{t}\to
\infty).
\end{eqnarray*}
\end{proof}

 Now we are led to show
\begin{lem}
$h_{\mu_{\infty}}=0.$
\end{lem}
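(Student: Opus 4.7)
The plan is to identify $\mu_\infty$ as a measure concentrated on a set of Hausdorff dimension zero, and then conclude zero entropy via the dimension--Lyapunov--entropy formula for ergodic invariant measures.

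First I would pin down the Lyapunov exponent of $\mu_\infty$. Equation (\ref{P'(t)}) of Proposition \ref{p4.9} gives $\int\log|T'|\,d\mu_{t_n}=-P'(t_n)=-\beta_n\to\gamma_0$. Since $\log|T'(x)|=-2\log x$ is lower semicontinuous on $[0,1]$ and bounded below, the Portmanteau theorem for weakly convergent probability measures yields
\begin{eqnarray*}
\int\log|T'|\,d\mu_\infty \le \liminf_{n\to\infty}\int\log|T'|\,d\mu_{t_n}=\gamma_0.
\end{eqnarray*}
On the other hand, for any $T$-invariant probability $\mu$, Birkhoff's theorem together with the universal inequality $\lambda(x)\ge\gamma_0$ (where defined) gives $\int\log|T'|\,d\mu=\int\lambda\,d\mu\ge\gamma_0$. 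Hence $\int\log|T'|\,d\mu_\infty=\gamma_0$. Applying Birkhoff once more to $\log|T'|\in L^1(\mu_\infty)$ and using $\lambda\ge\gamma_0$ almost everywhere, the equality of means forces $\lambda(x)=\gamma_0$ for $\mu_\infty$-a.e.\ $x$, so $\mu_\infty(F_{\gamma_0})=1$.

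Next I would transfer the question to the Khintchine level set. By Lemma \ref{ABC}, $F_{\gamma_0}=A=B=E_0$, and the argument at the end of Section \ref{5.1} has already established $\dim_H E_0=0$. Thus $\mu_\infty$ is a $T$-invariant Borel probability measure supported on a set of Hausdorff dimension zero; in particular, its measure-theoretic dimension $\dim_H\mu_\infty:=\inf\{\dim_H Y:\mu_\infty(Y)=1\}$ vanishes.

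Finally I would invoke the dimension formula via ergodic decomposition. Write $\mu_\infty=\int\nu_\omega\,d\mu_\infty(\omega)$ with $\nu_\omega$ ergodic. Since $F_{\gamma_0}$ is $T$-invariant and has full $\mu_\infty$-measure, $\nu_\omega(F_{\gamma_0})=1$ for $\mu_\infty$-a.e.\ $\omega$, so each ergodic component has Lyapunov exponent $\lambda_{\nu_\omega}=\gamma_0>0$. The Mauldin--Urba\'nski dimension formula for ergodic invariant measures of conformal iterated function systems (see \cite{MUbook}) gives $\dim_H\nu_\omega=h_{\nu_\omega}/\lambda_{\nu_\omega}$; since $\dim_H\nu_\omega\le\dim_H F_{\gamma_0}=0$, we conclude $h_{\nu_\omega}=0$. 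Affinity of the Kolmogorov--Sinai entropy under ergodic decomposition yields $h_{\mu_\infty}=\int h_{\nu_\omega}\,d\mu_\infty(\omega)=0$.

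The main obstacle will be the correct invocation of the dimension--entropy--Lyapunov identity in the infinite-alphabet Gauss setting; a fully self-contained alternative would redo this step using the Shannon--McMillan--Breiman theorem together with the Gibbs-type local estimate (\ref{estimate}) already exploited in Section \ref{5.1}. A minor point is verifying that $\mu_\infty$ has no atom at $0$, which is automatic from $\int\log|T'|\,d\mu_\infty=\gamma_0<\infty$.
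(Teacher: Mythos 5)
Your proposal is correct and lands on the same two pillars as the paper's proof: showing $\int\log|T'|\,d\mu_\infty=\gamma_0$ (hence $\lambda=\gamma_0$ $\mu_\infty$-a.e.\ and $\mu_\infty$ is carried by $F_{\gamma_0}=A=B=E_0$), and exploiting $\dim E_0=t(0)=0$. Where you diverge is in the closing step. The paper works pointwise: it invokes Shannon--McMillan--Breiman together with Lemma~\ref{lemma2.5} and Lemma~\ref{d_delta} to get the local identity $h_{\mu_\infty}(x)=\underline{D}_{\mu_\infty}(x)\lambda(x)$ almost everywhere, observes that support on a zero-dimensional set forces $\underline{D}_{\mu_\infty}(x)=0$ a.e.\ (citing \cite{Fan}), and then integrates via the Brin--Katok formula $h_{\mu_\infty}=\int h_{\mu_\infty}(x)\,d\mu_\infty(x)$. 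You instead decompose $\mu_\infty$ into ergodic components, apply the Mauldin--Urba\'nski identity $\dim_H\nu_\omega = h_{\nu_\omega}/\lambda_{\nu_\omega}$ to each, and use affinity of entropy. These are really the same argument in two packagings: the Mauldin--Urba\'nski formula is itself proved by the local entropy/dimension computation the paper performs explicitly. Your version buys a cleaner black-box statement at the cost of invoking ergodic decomposition and a citation; the paper's version is self-contained within the lemmas already proved in Section~2. One genuine improvement in your write-up: the paper asserts $\int\log|T'|\,d\mu_\infty=\lim_n\int\log|T'|\,d\mu_{t_n}$ without comment, whereas weak convergence alone does not give convergence of integrals of the unbounded observable $\log|T'|$. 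Your use of the Portmanteau inequality for the lower semicontinuous, bounded-below function $-2\log x$, combined with the universal lower bound $\int\log|T'|\,d\mu\geq\gamma_0$ for all invariant $\mu$, supplies exactly the missing justification and is the more careful route.
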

\begin{proof}
 Let $h_{\mu_{\infty}}(x)$ be the local entropy
  of $\mu_{\infty}$ at $x$ which is defined by
\begin{eqnarray*}
  h_{\mu_{\infty}}(x)= \lim_{n\to \infty} \frac{\log \mu_{\infty}(I_n(x))}{n},
\end{eqnarray*}
if the limit exists. Let $\underline{D}_{\mu_{\infty}}(x)$ be the
lower local dimension of $\mu_{\infty}$ at $x$ which is defined by
\[
\underline{D}_{\mu_{\infty}}(x):= \liminf_{r\to 0} \frac{\log
\mu_{\infty}(B(x,r))}{\log r}.
\]
By Shannon-McMillan-Breiman theorem, $h_{\mu_{\infty}}(x)$ exists
$\mu_{\infty}$-almost everywhere. It is also known that $\lambda(x)$
exists almost everywhere (by Birkhoff's theorem). So, by the
definitions, we have
\[
      h_{\mu_{\infty}}(x) = \underline{D}_{\mu_{\infty}}(x)  \lambda(x)   \quad
      \mu_{\infty}-a.e..
\]

By Birkhoff's theorem and (\ref{P'(t)}),
\begin{eqnarray*}
 \int \lambda(x) d{\mu_{\infty}}(x)&=&\int \log |T'|(x) d{\mu_{\infty}}(x) \\
 &=& \lim_{n\to \infty} \int \log |T'|(x) d\mu_{t_n}\\
 &=& -\lim_{n\to \infty} P'(t_n) =
\gamma_0 <\infty.
\end{eqnarray*}
Hence $\lambda(x)$ is almost everywhere finite. Recall that
\cite{BK}
\[
      h_{\mu_{\infty}} = \int h_{\mu_{\infty}}(x) d {\mu_{\infty}} (x).
\]
 Thus it suffices to prove
\[
       \underline{D}_{\mu_{\infty}}(x) = 0   \quad
       {\mu_{\infty}}-a.e..
\]
 That means (\cite{Fan}) the upper dimension of ${\mu_{\infty}}$ is zero,
i.e., ${\mu_{\infty}}$ is supported by a zero-dimensional set.

Since $\int \lambda(x) d{\mu_{\infty}}(x)=\gamma_0 $ and
$\lambda(x)\geq \gamma_0 $ for any $x$, we have for $\mu_{\infty} $
almost everywhere $\lambda(x)=\gamma_0 $. Thus by Birkhoff's
theorem, $\mu_{\infty}$ is supported by the following set
\begin{eqnarray}\label{set}
\bigg\{ x\in [0,1]: \lim_{n\to \infty} \frac{1}{n}\sum_{j=0}^{n-1}
\log |T'(T^j x)|=\gamma_0 \bigg\}.
\end{eqnarray}
Thus we need only to show that the Hausdorff dimension of this set
is zero.

Recall
\begin{eqnarray*}
 \lim_{n\to \infty} \frac{1}{n}\sum_{j=0}^{n-1} \log |T'(T^j x)| =
 2\lim_{n\to \infty}\frac{1}{n} \log q_n(x).
\end{eqnarray*}
By Lemma \ref{ABC}, (\ref{set}) is in fact the following
\begin{eqnarray}\label{set2}
\bigg\{ x\in [0,1]: \lim_{n\to \infty} \frac{1}{n}\sum_{j=1}^{n}
\log a_j(x) =0 \bigg\}.
\end{eqnarray}
However, the Hausdorff dimension of (\ref{set2}) is nothing but
$t(0) $, the special case $\xi=0 $ discussed in the subsection 5.1.,
which was proved to be zero. Thus the proof is completed.
\end{proof}

\medskip

Recall that $\lambda_0= \int \log |T'(x)| d \mu_G $. Let
$\tilde{D}_0:= \{(\tilde{t},q): \tilde{t}-q>1/2, 0\leq \tilde{t}
\leq 1 \} $. We have a  proposition similar to Proposition
\ref{solution}.
\begin{pro}\label{solution-Lya}
 For any $\beta\in (\gamma_0, \infty)$, the system
\begin{eqnarray}\label{equations-Lya}
\left\{
  \begin{array}{ll}
     P_1(\tilde{t},q)=q\beta, \\
     \displaystyle \frac{\partial P_1}{\partial q}(\tilde{t},q)= \beta
  \end{array}
\right.
\end{eqnarray}
admits a unique solution $(\tilde{t}(\beta),q(\beta))\in
\tilde{D}_0$. For $\beta=\lambda_0 $, the solution is
$(\tilde{t}(\lambda_0),q(\lambda_0))=(1,0) $. The functions
$\tilde{t}(\beta)$ and $q(\beta)$ are analytic.
\end{pro}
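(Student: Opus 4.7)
The plan is to parallel the proof of Proposition \ref{solution} step by step, exploiting the simplifying identity $P_1(\tilde t,q)=P(\tilde t-q)$, where $P(\cdot)=P(\cdot,0)$ is the one-variable pressure studied in Proposition \ref{p4.9}. First, for fixed $\beta>\gamma_0$ and arbitrary $\tilde t\in\mathbb R$, I would use the preceding proposition — monotonicity of $\partial P_1/\partial q$ together with the limits $\gamma_0$ and $+\infty$ as $q\to-\infty$ and $q\to\tilde t-1/2$ — to produce a unique $q(\tilde t,\beta)\in(-\infty,\tilde t-1/2)$ satisfying the second equation of (\ref{equations-Lya}). Setting $u=\tilde t-q$, one checks that $q(\tilde t,\beta)=\tilde t-u(\beta)$, where $u(\beta)\in(1/2,\infty)$ is the unique root of $-P'(u)=\beta$.

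Next, I would define $W(\tilde t):=P_1(\tilde t,q(\tilde t,\beta))-\beta\,q(\tilde t,\beta)$. The chain rule, together with the relation $(\partial P_1/\partial q)(\tilde t,q(\tilde t,\beta))=\beta$, gives $W'(\tilde t)=(\partial P_1/\partial\tilde t)(\tilde t,q(\tilde t,\beta))=P'(u(\beta))<0$, so $W$ is strictly decreasing on $\mathbb R$. Existence and uniqueness of $\tilde t(\beta)\in[0,1]$ will then follow from the intermediate value theorem, once the sign inequalities $W(0)>0$ and $W(1)\le 0$ are established.

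These two sign inequalities form the main obstacle, and are what separate the argument from the Khintchine case. Substituting the explicit expressions $q(0,\beta)=-u(\beta)$ and $q(1,\beta)=1-u(\beta)$ reduces them to $W(0)=P(u)+\beta u>0$ and $W(1)=P(u)+\beta u-\beta\le 0$, with $u=u(\beta)$. For the first I would invoke the bound $P(u)\ge -\gamma_0 u$ (proved in the preceding proposition by keeping only the word $1^n$ in the sum defining $P$); it yields $W(0)\ge(\beta-\gamma_0)u>0$. For the second I would rewrite the inequality as $P(u)\le(u-1)P'(u)$ and deduce it from convexity of $P$ together with $P(1)=0$: integrate $P'(t)\le P'(u)$ over $[1,u]$ when $u>1$, and integrate $P'(t)\ge P'(u)$ over $[u,1]$ when $1/2<u<1$. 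Strict convexity forces strict inequality except at $u=1$, i.e., except when $\beta=\lambda_0$.

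Once existence and uniqueness are in hand, the identification of the solution at $\beta=\lambda_0$ is immediate from $P(1)=0$ and $-P'(1)=\lambda_0$: the pair $(\tilde t,q)=(1,0)$ visibly solves (\ref{equations-Lya}), hence coincides with $(\tilde t(\lambda_0),q(\lambda_0))$ by uniqueness. Finally, analyticity of $\tilde t(\beta)$ and $q(\beta)$ follows from the implicit function theorem applied to $F(\tilde t,q)=\bigl(P_1(\tilde t,q)-q\beta,\;\partial P_1/\partial q(\tilde t,q)-\beta\bigr)$, exactly as in Proposition \ref{solution}: at the solution the top-right entry of the Jacobian vanishes, so $\det J(F)=(\partial P_1/\partial\tilde t)\cdot(\partial^2P_1/\partial q^2)\ne 0$, by the analogue of Proposition \ref{potential} for $P_1$.
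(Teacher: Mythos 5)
Your proof is correct, and it supplies exactly the details the paper leaves implicit when it asserts that the Lyapunov case is ``similar to Proposition~\ref{solution}.'' The reduction via $P_1(\tilde t,q)=P(\tilde t-q)$ and the variable $u=\tilde t-q$ streamlines every step, and the one place where the Khintchine argument does not transfer verbatim is $W(0)>0$: there one used positivity of $\log\zeta(-q)$, but here $P_1(0,q(0,\beta))=P(u)$ is negative exactly when $u>1$ (equivalently $\gamma_0<\beta<\lambda_0$), and you correctly replace that step with the lower bound $P(u)\ge -\gamma_0 u$ from the $1^n$-word estimate and the hypothesis $\beta>\gamma_0$. The remaining pieces --- $W(1)\le 0$ via one-variable convexity and $P(1)=0$, the identification of $(1,0)$ at $\beta=\lambda_0$ using $-P'(1)=\lambda_0$, and analyticity from the implicit function theorem with $\det J(F)=\frac{\partial P_1}{\partial\tilde t}\cdot\frac{\partial^2 P_1}{\partial q^2}\neq 0$ --- all match the intended argument.
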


With the same argument, we can prove that $\tilde{t}(\beta) $ is the
spectrum of Lyapunov exponent. It is analytic, increasing on
$(\gamma_0, \lambda_0] $ and decreasing on $(\lambda_0, \infty) $.
It is also neither concave nor convex. In other words, Theorem
\ref{thm-Lyapunov} can be similarly proved.
\begin{center}
\begin{pspicture}(0,0)(10,7)
\rput(4.97,3){\includegraphics{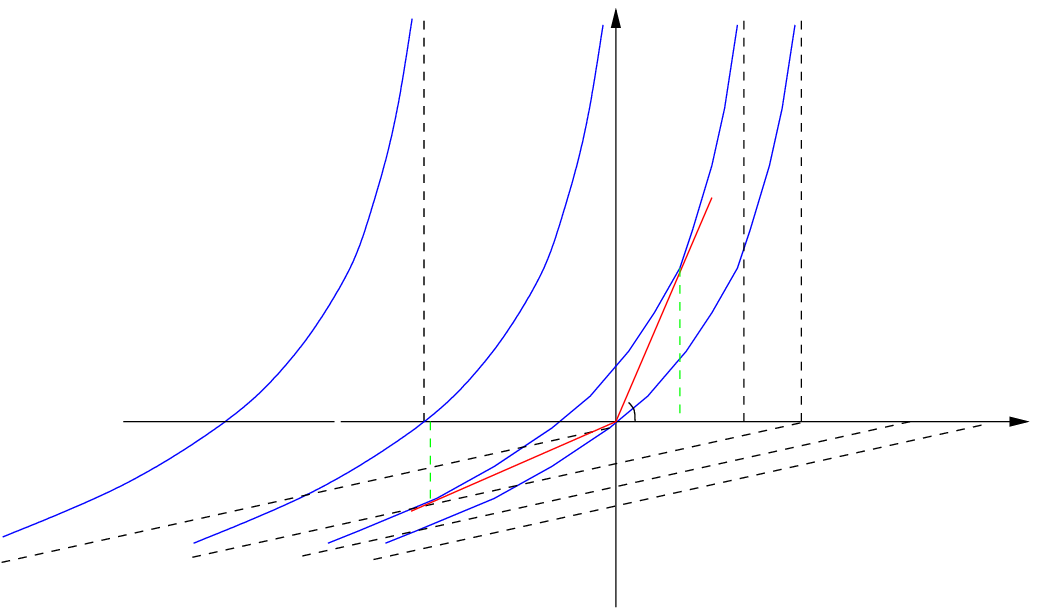}}
\rput(8.04,2.1){$\frac{1}{2}$} \rput(3.65,2.1){$-\frac{1}{2}$}
\rput(6.3,2){$\beta$} \rput(3.25,5.2){$t=0$}
\rput(5.0,4.2){$t=\frac{1}{2}$} \rput(6.6,4.4){$t(\beta)$}
\rput(8.3,5.75){$t=1$}
\rput(6.15,1.6){$0$} \rput(10,1.6){$q$} \rput(6,6.3){$P(t,q)$}
\end{pspicture}
\begin{center}
{{\it Figure 4.} Solution of (\ref{equations-Lya})}
\end{center}
\end{center}

We finish the paper by the observation that the Lyapunov spectrum
 can be stated as follows, which is similar to the classic formula, but with the difference that we have to divide the Legendre transform
 by $\beta$.
\begin{pro}
\begin{eqnarray}\label{t-beta}
   \tilde{t}(\beta)= \frac{P(-q(\beta))}{\beta}-q(\beta) =
   \frac{1}{\beta} \inf_{q} \{ P(-q) - q\beta\}.
\end{eqnarray}
\end{pro}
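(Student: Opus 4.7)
The strategy is to exploit the one-variable reduction $P_1(\tilde t,q)=P(\tilde t-q)$ (with $P(s):=P(s,0)$ as in Proposition~\ref{p4.9}) to turn the claim into an elementary Legendre-transform computation for the convex function $P$.

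First I will rewrite the defining system (\ref{equations-Lya}) in terms of $s:=\tilde t(\beta)-q(\beta)$. The first equation becomes $P(s)=q(\beta)\beta$ and the second becomes $-P'(s)=\beta$. Since $P$ is strictly convex (Proposition~\ref{potential} specialised to $q=0$), the equation $P'(s)=-\beta$ has at most one solution; call it $s(\beta)$. Then
\begin{equation*}
q(\beta)=\frac{P(s(\beta))}{\beta},\qquad
\tilde t(\beta)=s(\beta)+q(\beta)=s(\beta)+\frac{P(s(\beta))}{\beta}.
\end{equation*}

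Next I will analyse the auxiliary function $g(s):=P(s)+s\beta$ on $(1/2,\infty)$. It is strictly convex with $g'(s)=P'(s)+\beta$. Proposition~\ref{p4.11}(1), restricted to the vertical line $q=0$, forces $P(s)\to+\infty$ as $s\to(1/2)^+$, which together with convexity yields $P'(s)\to-\infty$; and the analysis of $P(\tilde t)$ carried out just above in Section~6 gives $\lim_{s\to\infty}P'(s)=-\gamma_0$. Hence for every $\beta\in(\gamma_0,\infty)$ the derivative $g'$ changes sign exactly once on $(1/2,\infty)$, at $s=s(\beta)$, which is therefore the unique global minimiser. Evaluating,
\begin{equation*}
\inf_{s>1/2}\{P(s)+s\beta\} \;=\; P(s(\beta))+s(\beta)\beta \;=\; \beta\bigl(q(\beta)+s(\beta)\bigr) \;=\; \beta\,\tilde t(\beta).
\end{equation*}
The change of variable $s=-q$ then yields $\inf_q\{P(-q)-q\beta\}=\beta\,\tilde t(\beta)$, which is the right-hand equality of (\ref{t-beta}). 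The infimum is attained at $q^{\ast}=-s(\beta)$, and substituting this value into $P(-q)/\beta-q$ gives $P(s(\beta))/\beta+s(\beta)=\tilde t(\beta)$, establishing the middle equality (with the $q(\beta)$ appearing there to be read as the Legendre minimiser $q^{\ast}$, which is related to the second coordinate of the solution of (\ref{equations-Lya}) through $q^{\ast}=-s(\beta)=q(\beta)-\tilde t(\beta)$).

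No serious obstacle is anticipated: the whole argument is a direct convex-analysis computation, made possible by the special fact that the Lyapunov pressure depends on $(\tilde t,q)$ only through the combination $\tilde t-q$. The single point that needs attention is the verification that the minimiser of $g$ lies in the open interval $(1/2,\infty)$ for every $\beta>\gamma_0$, which is guaranteed by the two boundary behaviours of $P'$ recalled above, in perfect agreement with the fact that Proposition~\ref{solution-Lya} produces a solution precisely on this range.
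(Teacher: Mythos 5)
Your proof follows the same reduction as the paper's: exploit $P_1(\tilde t,q)=P(\tilde t-q)$, set $s=\tilde t-q$, and rewrite the defining system (\ref{equations-Lya}) in the single variable $s$, which is exactly the paper's ``left translation'' of the graph of $P$. You are somewhat more careful than the paper, in that you explicitly justify the Legendre-type infimum via strict convexity together with the boundary behaviour $P'(s)\to-\infty$ as $s\to(1/2)^+$ and $P'(s)\to-\gamma_0$ as $s\to\infty$, and you correctly flag the notational abuse in (\ref{t-beta}) (the $q(\beta)$ there is $q(\beta)-\tilde t(\beta)$ from Proposition~\ref{solution-Lya}, not the second coordinate of the solution itself), both of which the paper leaves implicit.
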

\begin{proof}  In fact, the family of functions $P_1(\tilde{t},q) $
with parameter $\tilde{t}$ are just right translation of the
function $P(-q)$ with the length $\tilde{t}$. Write the system
(\ref{equations-Lya}) as follows
\begin{eqnarray}\label{equations-Lya1}
\left\{
  \begin{array}{ll}
    P(\tilde{t}-q)=q\beta , \\
     \frac{d P}{d q}(\tilde{t}-q)= \beta.
  \end{array}
\right.
\end{eqnarray}
If we denote by $\mu_q $, the Gibbs measure with respect to
potential $q\Psi $, then by a left translation the system
(\ref{equations-Lya1}) can be written as
\begin{eqnarray*}
\left\{
  \begin{array}{ll}
    P(-q)=(\tilde{t}+q) \beta  , \\
     \frac{d P}{d q}(-q)= \beta.
  \end{array}
\right.
\end{eqnarray*}
Thus
\begin{eqnarray*}
\left\{
  \begin{array}{ll}
    \tilde{t}=\frac{P(-q)}{\beta} -q , \\
     \frac{d P}{d q}(-q)= \beta.
  \end{array}
\right.
\end{eqnarray*}
By using the second equation, we can write $q$ as a function of
$\beta$, hence we get (\ref{t-beta}).\end{proof}

\begin{center}
\begin{pspicture}(0,0)(10,7)
\rput(3.08,2.98){\includegraphics{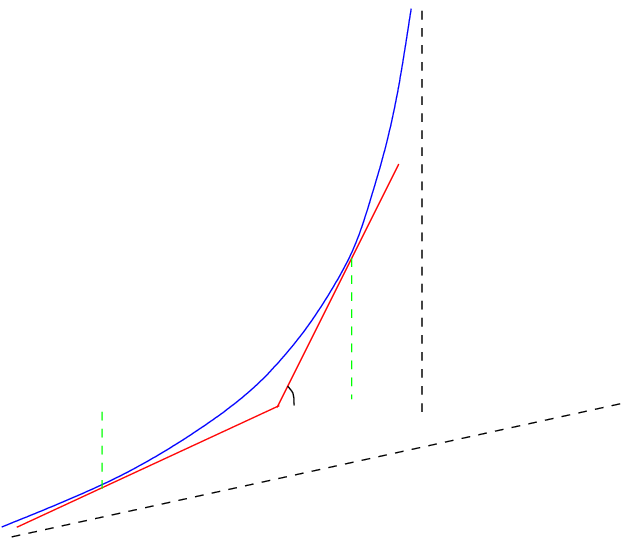}}
\rput(4.71,3.16){\includegraphics{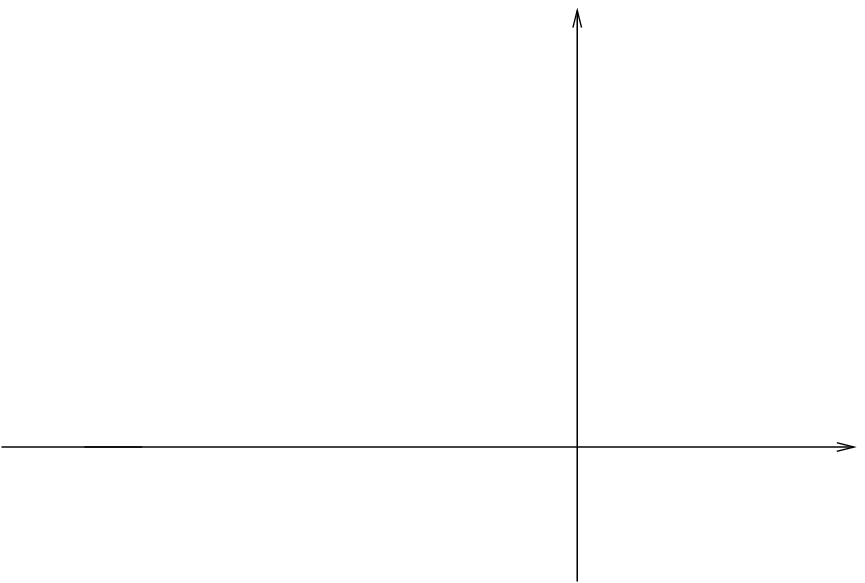}}
\rput(6.56,2.98){\includegraphics{Legendre.eps}}
\rput(4.46,1.9){$-\frac{1}{2}$} \rput(6.5,1.85){$\beta$}
\rput(3.05,1.85){$\beta$} \rput(3.43,5){$t=0$} \rput(4,6){$P(-q)$}
\rput(7,5){$t(\beta)$} \rput(8,6){$P(t-q)$} \rput(6.35,1.4){$0$}
\rput(2.05,1.75){$-1$} \rput(8.95,1.85){$q$}
\rput(6.15,6.3){$P_1(t,q)$}
\end{pspicture}
\begin{center}
{{\it Figure 5.} The other way to see $t(\beta) $}
\end{center}
\end{center}

{\small }


\begin{thebibliography}{99}

\bibitem{BSS}
 L. Barreira, B. Saussol and J. Schmeling, { Higher-dimensional multifractal analysis},
 {\it J. Math Pures Appli.}, {\bf 81} (2002), 67-91.

\bibitem{BB} F. Bernstein, {\"Uber eine Anwendung der Mengenlehre auf
ein der Theorie der s\"akularen St\"orungen herr\"uhrendes Problem},
{\it Math. Ann.}, {\bf 71} (1912), 417-439.

\bibitem{Be} T. Bedford, {Applications of dynamical systems theory to fractals- a study of cookie-cutter
Cantor sets}, {\it Fractal Geometry and analysis}, (J. B\'{e}lair
and S. Dubuc, ed), Kluiver, 1991, 1-44.

\bibitem{Bi}  P. Billingsley, {\it Ergodic theory and information },
John Wiley and Sons, Inc., New York-London-Sydney, 1965.


 \bibitem{Bo1} E. Borel, { Les probabilit\'es d\'enombrables et leurs
applications arithm\'etiques}, {\it Rend. Circ. Mat. Palermo.}, {\bf
27} (1909), 247-271.

\bibitem{Bo2} E. Borel, {Sur un probl\`eme de probabilit\'es relatif
aux fractions continues}, {\it Math. Ann.}, {\bf 72} (1912),
578-584.

\bibitem{BDK} W. Bosma, K. Dajani and C. Kraaikamp, { Entropy and counting correct digits},
{\it University of Nijmegen, report no. 9925}, 1999.
http://www-math.sci.kun.nl /math/onderzoek/reports/reports1999.html.


\bibitem{BK} M. Brin and A. Katok,  On local entropy, {\it Geometric dynamics  (Lecture Notes in Math., 1007)}, Springer, Berlin, (1983),
30-38.

\bibitem{Bu2} R. T. Bumby, { Hausdorff dimension of sets arising in number
theory}, {\it Number theory,  New York, 1983-84, (Lecture Notes in
Math., 1135)}, Springer, Berlin, (1985), pp. 1-8.

\bibitem{Ca} J. Cassels, {\it An Introduction to Diophantine
Approximation}, CUP, New York, 1957.

\bibitem{Co} I. Cornfeld, S. Fomin, and Ya, Sinai, {\it Ergodic theory},
Springer-Verlag, New York, 1982.

\bibitem{Fa1} K. J. Falconer, {\it Fractal Geometry, Mathematical Foundations
and Application}, John Wiley \& Sons, Ltd., Chichester, 1990.


\bibitem{Fan} A. H. Fan, {Sur les dimensions de mesures}, {\it Studia Math}, {\bf
111}
(1994), 1-17

\bibitem{FF} A. H. Fan and  D. J. Feng,
{On the distribution of long-term average on the symbolic space},
{\it J. Stat. Phy.}, {\bf 99} (3) (2000), 813-856.

\bibitem{FFW} A. H. Fan, D. J. Feng and J. Wu,
{Recurrence, dimension and entropy}, {\it J. London Math. Soc.},
{\bf 64} (2) (2001), 229-244.

\bibitem{Go} I. J. Good, { The fractional dimensional theory of continued fractions},
{\it Proc. Camb. Philos. Soc.}, {\bf 37} (1941), 199-228.

\bibitem{HMU} P. Hanus, R. D. Mauldin and M. Urbanski, {
Thermodynamic formalism and multifractal analysis of conformal
infinite iterated function systems}, {\it Acta. Math. Hungar.}, {\bf
96} (1-2) (2002), 27-98.

\bibitem{HaW} G. Hardy and E. Wright, {\it An Introduction to the Theory of
Numbers}, fifth edition, Oxford University Press, New York, 1979.

\bibitem{He1} D. Hensley, { The Hausdorff dimensions of some continued fraction
Cantor sets}, {\it J. Number Theory}, {\bf 33} (2) (1989), 182-198.

\bibitem{He2} D. Hensley, { A polynomial time algorithm for
the Hausdorff dimension of continued fraction Cantor sets}, {\it J.
Number Theory}, {\bf 58} (1) (1996), 9-45.


\bibitem{Ja} I. Jarnik, { Zur metrischen Theorie der diopahantischen
Approximationen}, {\it Proc. Mat. Fyz.}, {\bf 36} (1928), 91-106.

\bibitem{JenP} O. Jenkinson and M. Pollicott, { Computing the dimension of dynamically defined sets:
$E\sb 2$ and bounded continued fractions}, {\it Ergod. Th. Dynam.
Sys.}, {\bf 21} (5) (2001), 1429-1445.

\bibitem{Jen} O. Jenkinson, { On the density of Hausdorff dimensions of
bounded type continued fraction sets: the Texan conjecture}, {\it
Stoch. Dyn.}, {\bf 4} (1) (2004), 63-76.

\bibitem{Jen1} O. Jenkinson, {Geometric barycentres of invariant measures for circle
 maps},
 {\it  Ergod. Th. Dynam. Sys.}, {\bf 21} (2) (2001), 511--532.

\bibitem{Kesse2007} M. Kesseb\"{o}hmer and B. Stratmann, {A multifractal analysis for Stern-Brocot intervals,
continued fractions and Diophantine growth rates}, {\it J. Reine
Angew. Math.}, {\bf 605} (2007), 133--163.

\bibitem{Kh} A. Ya. Khintchine, {\it Continued Fractions}, University
of Chicago Press, Ill.-London, 1964.

\bibitem{Kr} C. Kraaikamp, {A new class of continued fraction expansions},
{\it Acta Arith.}, {\bf 57} (1991), no. 1, 1-39.

\bibitem{Kr1}  M. Iosifescu and C. Kraaikamp,  {\it The Metrical Theory on Continued
Fractions,  (Mathematics and its Applications, 547)}, Kluwer
Academic Publishers, Dordrecht, 2002.

\bibitem{MU1} R. D. Mauldin and M. Urba\'nski, {Dimensions and measures in infinite iterated fuction
systems}, {\it Proc. Lodon. Math. Soc. (3)}, {\bf 73} (1) (1996),
105-154.

\bibitem{MU} R. D. Mauldin and M. Urba\'nski, {Conformal iterated
function systems with applications to the geometry of continued
fractions}, {\it Trans. Amer. Math. Soc.}, {\bf 351} (12) (1999),
4995-5025.

\bibitem{MUbook} R. D. Mauldin and M. Urba\'nski,
{\it Graph Directed Markov Systems --Geometry and Dynamics of Limit
Sets Series, (Cambridge Tracts in Mathematics, 148)}, Cambridge
University Press, Cambridge, 2003.


\bibitem{Ma1} D. Mayer, { A zata function related to the continued fraction
transformation}, {\it Bull. Soc. Math. France}, {\bf 104} (1976),
195-203.

\bibitem{Ma2} D. Mayer, {Continued fractions and related
transformations}, {\it Ergodic theory, symbolic dynamics, and
hyperbolic spaces (Trieste, 1989)}, 175--222, Oxford Sci. Publ.,
Oxford Univ. Press, New York, 1991.

\bibitem{Ma3} D. Mayer, {On the thermodynamics formalism for the Gauss
map}, {\it Comm. Math. Phys.}, {\bf 130} (1990), 311-333.

\bibitem{Pesin} Y. B. Pesin, {\it
Dimension theory in dynamical systems: contemporary views and
applications,  (Chicago Lectures in Mathematics)}, The University of
Chicago Press, Chicago, 1998.


\bibitem{PoW} M. Pollicott and H. Weiss, {Multifractal analysis of Lyapunov exponent for
continued fraction and Manneville-Pomeau transformations and
applications to Diophantine approximation }, {\it Comm. Math.
Phys.}, {\bf 207} (1) (1999), 145-171.

\bibitem{Wa} P. Walters, {\it An introduction to ergodic theory(Graduate Texts in Mathematics, 79) }, Springer-Verlag, New York-Berlin, 1982.

\bibitem{Wa1} P. Walters, {Invariant Measures and Equilibrium States for Some Mappings which Expand
Distances}, {\it Trans.  Amer.  Math.  Soc.}, {\bf 236} (1978),
121-153.

\bibitem{Wa2} P. Walters, {Convergence of the Ruelle operator for a function satisfying Bowen's
condition}, {\it Trans.  Amer.  Math.  Soc.}, {\bf 353} (1) (2001),
327-347.

\bibitem{Wa3} P. Walters, {A Variational Principle for the Pressure of Continuous
Transformations}, {\it Amer. J. Math.}, {\bf 97} (4)  (1975),
937-971.

\bibitem{Wu} J. Wu, {A remark on the growth of the denominators of convergents},
 {\it Monatsh. Math.}, {\bf 147} (3) (2006), 259--264.
\end{thebibliography}
\end{document}